\theoremstyle{plain}
\newtheorem{corollary}{Corollary}
\newtheorem{lemma}{Lemma}
\newtheorem{proposition}{Proposition}
\newtheorem{theorem}{Theorem}
\theoremstyle{definition}
\newtheorem{definition}{Definition}
\newtheorem{notation}{Notation}
\newtheorem{remark}{Remark}
\numberwithin{equation}{section}
\begin{document}
\title[Absolute continuity]{Characterizing the absolute continuity of the
convolution of orbital measures in a classical Lie algebra}
\author{Sanjiv Kumar Gupta}
\address{Dept.\ of Mathematics and Statistics\\
Sultan Qaboos University\\
P.O.~Box 36 Al Khodh 123\\
Sultanate of Oman}
\email{gupta@squ.edu.om}
\author{Kathryn E. Hare}
\address{Dept.\ of Pure Mathematics\\
University of Waterloo\\
Waterloo, Ont.,~Canada\\
N2L 3G1}
\email{kehare@uwaterloo.ca}
\thanks{The first author would like to thank the Dept.\ of Pure Mathematics
at the University of Waterloo and the second author the School of
Mathematics and Statistics at St.~Andrews University for their hospitality
while some of this research was done. This research was supported in part by
the Edinburgh Math. Society, NSERC and Sultan Qaboos University}
\subjclass{Primary 43A80; Secondary 17B45, 58C35}
\keywords{compact Lie algebra, orbital measure, absolutely continuous measure%
}
\maketitle

\begin{abstract}
Let $\mathfrak{g}$ be a compact, simple Lie algebra of dimension $d$. It is
a classical result that the convolution of any $d$ non-trivial, $G$%
-invariant, orbital measures is absolutely continuous with respect to
Lebesgue measure on $\mathfrak{g}$ and the sum of any $d$ non-trivial orbits
has non-empty interior. The number $d$ was later reduced to the rank of the
Lie algebra (or rank $+1$ in the case of type $A_{n}$). More recently, the
minimal integer $k=k(X)$ such that the $k$-fold convolution of the orbital
measure supported on the orbit generated by $X$ is an absolutely continuous
measure was calculated for each $X\in \mathfrak{g}$.

In this paper $\mathfrak{g}$ is any of the classical, compact, simple Lie
algebras. We characterize the tuples $(X_{1},\dots,X_{L})$, with $X_{i}\in 
\mathfrak{g},$ which have the property that the convolution of the $L$%
-orbital measures supported on the orbits generated by the $X_{i}$ is
absolutely continuous and, equivalently, the sum of their orbits has
non-empty interior. The characterization depends on the Lie type of $%
\mathfrak{g}$ and the structure of the annihilating roots of the $X_{i}$.
Such a characterization was previously known only for type $A_{n}$.
\end{abstract}

\section{Introduction}

Let $G$ be a compact, connected simple Lie group and $\mathfrak{g}$ its Lie
algebra. Given $X\in \mathfrak{g}$, we let $\mu _{X}$ denote the $G$
-invariant, orbital measure supported on $O_{X}$, the orbit generated by $X$
under the adjoint action of $G$. Geometric properties of the Lie algebra
ensure that if a suitable number of non-trivial orbits are added together
the resulting subset of $\mathfrak{g}$ has non-empty interior and if a
suitable number of orbital measures are convolved together, the resulting
measure is absolutely continuous with respect to the Lebesgue measure on $%
\mathfrak{g}$. From the work of Ragozin in \cite{Ra2} it can be seen that
the dimension of the Lie algebra is a `suitable number'.

In a series of papers (see \cite{GHMathZ} and \cite{HJY} and the papers
cited therein) the authors, with various coauthors, improved upon Ragozin's
result determining, for each $X\in \mathfrak{g,}$ the integer $k(X)$ with
the property that $\mu _{X}^{k}$ is absolutely continuous for all $k\geq
k(X) $ and $\mu _{X}^{k}$ is singular to Lebesgue measure otherwise (where $%
\mu _{X}^{k}$ denotes the $k$-fold convolution). Furthermore, the $k$-fold
sum of $O_{X}$ has non-empty interior if $k\geq k(X)$ and otherwise has
measure zero. A formula was given for $k(X)$ depending on combinatorial
properties of the annihilating roots of $X$. In particular, it was shown
that the convolution of any $r$ orbital measures is absolutely continuous if
and only if $r$ is at least the rank of the Lie algebras when $\mathfrak{g}$
is of type $B_{n},C_{n}$ or $D_{n}$ and $r$ is at least rank+1 for the Lie
algebras of type $A_{n}$. The proofs relied heavily upon representation
theory and harmonic analysis.

By taking a geometric approach, Wright in \cite{Wr} extended these results
in the special case of the classical Lie algebra $\mathfrak{g}=su(n)$ (type $%
A_{n-1}$), proving that $\mu _{X_{1}}\ast \cdots \ast \mu _{X_{L}}$ is
absolutely continuous with respect to Lebesgue measure if and only if $%
\sum_{i=1}^{L}s_{i}\geq n(L-1)$ where $s_{i}$ is the dimension of the
largest eigenspace of the $n\times n$ matrix $X_{i},$ provided it is not the
case that $L=2$, $n\geq 4$ is even, and $X_{1},X_{2}$ each have two distinct
eigenvalues, both of multiplicity $n/2$.

Using primarily algebraic methods, Gracyzk and Sawyer (c.f., \cite{GSJFA}, 
\cite{GSLie}), addressed analogous problems in the setting of a non-compact,
symmetric space, improving upon other work of Ragozin, \cite{Ra}. In
particular, they characterized when the convolution of two (possibly
different) bi-invariant measures is absolutely continuous in the symmetric
spaces $sl(n,F)/su(n,F)$ (where the restricted root system is also type $%
A_{n-1}$).

Inspired by their methods, in this paper we characterize the $L$-tuples, $%
(X_{1},\dots ,X_{L})$ with $X_{i}\in \mathfrak{g}$, such that the
convolution $\mu _{X_{1}}\ast \cdots \ast \mu _{X_{L}}$ is absolutely
continuous when the Lie algebra is any one of the classical Lie algebras
(those of type $A_{n}$, $B_{n}$, $C_{n}$ or $D_{n}),$ leaving only one pair
in $D_{n}$ where we have been unable to decide the answer. As well, this
characterizes the $L$-tuples such that $\sum_{i=1}^{L}O_{X_{i}}$ has
non-empty interior in $\mathfrak{g}$ as opposed to measure zero. As Wright
found with type $A_{n}$, the characterization can be expressed most simply
as a function of the dimensions of the largest eigenspaces of the $X_{i}$
when these are viewed as matrices in the classical matrix Lie algebras (see
Section 3 for the precise statement). The characterization can also be
described in terms of the root structure of the set of annihilating roots of
the $X_{i},$ as was done in the previous study of convolutions of a single
orbital measure. Our argument is completely different from that used by
Wright and from the harmonic analysis - representation theory approach used
by the authors previously. It relies heavily upon the (algebraic) Lie theory
of roots and root vectors.

Using these results, we also obtain a similar characterization of the
absolute continuity of the convolution products of $G$-invariant measures, $%
\mu _{x_{i}},$ supported on conjugacy classes $C_{x_{i}}$ in $G,$ for the
elements $x_{i}\in G$ whose annihilating roots agree with those of a
preimage of $x_{i}$ in $\mathfrak{g}$ under the exponential map. This
extends work of \cite{GHAdv} where the minimal integer $k(x)$ with the
property that $\mu _{x}^{k(x)}$ is absolutely continuous was determined.

In a future paper, we will adapt our general strategy to improve upon
Gracyzk and Sawyer's symmetric space results.

Finding the density function, or Radon Nikodym derivative, of the absolutely
continuous measure $\mu _{X_{1}}\ast \cdots \ast \mu _{X_{L}}$ is a
challenging problem. In the case of the convolution of two orbital measures
in $su(n),$ this has been computed in \cite{FG}. A general formula for the
convolution of two orbital measures in terms of the projection of such
measures to maximal tori was found in \cite{DRW}. The density function for
the analogous problem on non-compact symmetric spaces was studied in \cite%
{GSJGA} (and see also the references cited there). In \cite{KT}, the sum of
two adjoint orbits in $su(n)$ is explicitly described in terms of a system
of linear equations, but for more than $2$-fold sums this too seems very
difficult. Other work investigating the smoothness properties of
convolutions of measures supported on manifolds whose product has non-empty
interior was carried out by Ricci and Stein in \cite{RS} and \cite{RS1}.

The paper is organized as follows: In section 2 we review background
material in Lie theory and introduce basic notation. In section 3 we state
the main result. The necessity of our characterization is proven in section
4. In section 5 we establish the general strategy for tackling the absolute
continuity problem and then complete the proof of the main theorem in
section 6. In section 7 we discuss consequences of our result and deduce the
absolute continuity result for convolutions of orbital measures on Lie
groups mentioned above.

\section{Notation and Background}

\subsection{Notation}

We begin by establishing notation and reviewing basic facts about roots and
root vectors. Assume $G_{n}$ is a classical, compact, connected simple Lie
group of rank $n$, one of type $A_{n}$, $B_{n}$, $C_{n}$ or $D_{n} $. We
denote by $\mathfrak{g}_{n}$ its (real) Lie algebra, $\mathfrak{t}_{n} $ a
maximal torus of $\mathfrak{g}_{n}$ and $W$ the Weyl group.

We write $[\cdot ,\cdot ]$ for the Lie bracket action. The map $ad: 
\mathfrak{g}_{n}\rightarrow \mathfrak{g}_{n}$ is given by $ad(X)(Y)=[X,Y]$.
The exponential function, $\exp ,$ is a surjection of $\mathfrak{g}_{n}$
onto $G_{n},$ and $G_{n}$ acts on $\mathfrak{g}_{n}$ by the adjoint action,
denoted $Ad(\cdot )$. Recall that for $M\in \mathfrak{g}_{n}$, 
\begin{equation*}
Ad(\exp M)=\exp (ad(M))=Id+\sum_{k=1}^{\infty }\frac{ad^{k}(M)}{k!}
\end{equation*}
where $ad^{k}(M)$ is the $k$-fold composition of $ad(M)$.

By an orbit of an element $X\in \mathfrak{g}_{n}$, we mean the subset 
\begin{equation*}
O_{X}:=\{Ad(g)(X):g\in G_{n}\}\subseteq \mathfrak{g}_{n}.
\end{equation*}
There is no loss in assuming $X$ belongs to $\mathfrak{t}_{n}$ since every
orbit contains a torus element. Orbits are compact manifolds of proper
dimension in $\mathfrak{g}_{n}$ and hence of Lebesgue measure zero. If $X=0$%
, then $O_{X}=\{0\}$ is a singleton, but otherwise $O_{X}$ has positive
dimension.

By the orbital measure, $\mu _{X},$ we mean the probability measure
invariant under the adjoint action of $G_{n}$ and compactly supported on $%
O_{X}$. It integrates bounded, continuous functions $f$ on $\mathfrak{g}_{n}$
by the rule 
\begin{equation*}
\int_{\mathfrak{g}_{n}}fd\mu _{X}=\int_{G_{n}}f(Ad(g)X)dg
\end{equation*}
where $dg$ is the Haar measure on $G_{n}$. The orbital measures are singular
to Lebesgue measure since their supports have Lebesgue measure zero. Except
in the special case when $X=0,$ $\mu _{X}$ is an example of a continuous
measure, meaning the $\mu _{X}$ - measure of any singleton is zero.

The classical Lie groups and algebras are said to be of type $A_{n}$ for $%
n\geq 1$, $B_{n}$ for $n\geq 2$, $C_{n}$ for $n\geq 3$ or $D_{n}$ for $n\geq
4$. This means that the root system of the complexified Lie algebra with
respect to the complexified torus, denoted $\Phi _{n}$, is of that Lie type.
It is often convenient to refer to type $A_{n}$ as type $SU(n+1)$ for
reasons that will become clear later.

For the convenience of the reader we describe $\Phi _{n}$ below for each of
the classical types. Note that by $e_{j}$ we mean the $j^{\prime }$th
standard basis vector of $\mathbb{R}^{n}$ (or in $\mathbb{R}^{n+1}$ in the
case of type $A_{n})$. The real span of $\Phi _{n}$, denoted $sp\Phi _{n}$,
is equal to $\mathbb{R}^{n}$ (or the subspace of $\mathbb{R}^{n+1}$ spanned
by the standard vectors $e_{j}-e_{n+1}$ for $j=1,\dots,n$ in the case of
type $A_{n}$). 
\begin{equation*}
\frame{$%
\begin{array}{cc}
\text{Lie algebra} & \text{Root system }\Phi _{n} \\ 
A_{n} & \{\pm (e_{i}-e_{j}):1\leq i<j\leq n+1\} \\ 
B_{n} & \{\pm e_{i},\pm e_{i}\pm e_{j}:1\leq i\neq j\leq n\} \\ 
C_{n} & \{\pm 2e_{i},\pm e_{i}\pm e_{j}:1\leq i\neq j\leq n\} \\ 
D_{n} & \{\pm e_{i}\pm e_{j}:1\leq i\neq j\leq n\}%
\end{array}
$}
\end{equation*}

In the case of type $A_{n}$, the Weyl group is the group of permutations on
the letters $\{1,\dots,n+1\}$. For types $B_{n}$, $C_{n}$ (and $D_{n})$, the
Weyl groups are the group of permutations on $\{1,\dots,n\}$, together with
(an even number of) sign changes.

These Lie algebras and groups can be identified with the classical matrix
algebras and groups listed below. All compact, connected simple Lie groups
are homomorphic images by finite subgroups of these classical matrix groups.

\begin{itemize}
\item $su(n)$ -- the set of $n\times n$ skew-Hermitian, trace zero matrices
is the model we use for the Lie algebra of type $A_{n-1}$. $SU(n)$ - the $%
n\times n$ special unitary matrices is a compact Lie group of type $A_{n-1}$.

\item $so(p)$ -- the set of $p\times p$ real, skew-symmetric matrices. When $%
p=2n$ it is the Lie algebra of type $D_{n}$ and when $p=2n+1$ it is of type $%
B_{n}$. $SO(p)$ - the $p\times p$ special orthogonal matrices are associated
compact Lie groups.

\item $sp(n)$ -- the set of $2\,n\times 2n$ matrices of the form $%
\begin{bmatrix}
A & B \\ 
-\overline{B} & \overline{A}%
\end{bmatrix}%
$ where $A,B$ are complex $n\times n$ matrices with $B$ symmetric and $A$
skew-Hermitian is the Lie algebra of type $C_{n}$. The $n$'th order
symplectic group, $Sp(n),$ is the set of $2n\times 2n$ unitary matrices $U$
satisfying $U^{tr}JU=J,$ where $J=%
\begin{bmatrix}
0 & -I \\ 
I & 0%
\end{bmatrix}%
$ with $I$ being the $n\times n$ identity matrix. $Sp(n)$ is a compact Lie
group of type $C_{n}.$
\end{itemize}

For each root $\alpha \in \Phi _{n},$ we let $E_{\alpha }$ denote a
corresponding root vector so that if $H\in \mathfrak{t}_{n}$, then 
\begin{equation}
[H,E_{\alpha }]=i\alpha (H)E_{\alpha }.  \label{adroot}
\end{equation}
(We make the convention that roots are real valued.) We will choose a
collection of root vectors, $\{E_{\alpha }\},$ that form a Weyl basis (see 
\cite[p.~290]{Va}). In particular, this ensures that if $\alpha $, $\beta $
and $\alpha +\beta $ are roots, then there are non-zero scalars $N_{\alpha
,\beta }$ satisfying $N_{\alpha ,\beta }=N_{-\alpha ,-\beta }$ and 
\begin{equation*}
[E_{\alpha },E_{\beta }]=N_{\alpha ,\beta }E_{\alpha +\beta }.
\end{equation*}
If $\alpha +\beta $ is not a root, then $[E_{\alpha },E_{\beta }]=0$.

The root vector, $E_{\alpha },$ can be written in a unique way as $E_{\alpha
}=RE_{\alpha }+iIE_{\alpha }$, where $RE_{\alpha }$ and $IE_{\alpha }$ both
belong to the (real) Lie algebra $\mathfrak{g}_{n}$. We refer to these as
the real and imaginary parts of the root vector. We write $FE_{\alpha }$ if
we mean either $RE_{\alpha }$ or $IE_{\alpha }$. One can easily see that $%
E_{-\alpha }=RE_{\alpha }-iIE_{\alpha }$. Furthermore, $RE_{\alpha
}=(E_{\alpha }+E_{-\alpha })/2$ and $IE_{\alpha }=(E_{\alpha }-E_{-\alpha
})/(2i)$.

The vector space spanned by $RE_{\alpha }$ and $IE_{\alpha }$ over various
sets of roots $\alpha $ will be important to us. In particular, we put 
\begin{equation}
\mathcal{V}_{n}=\{RE_{\alpha },IE_{\alpha }:\alpha \in \Phi
_{n}^{+}\}\subseteq \mathfrak{g}_{n}  \label{Vn}
\end{equation}
where $\Phi _{n}^{+}$ denotes the subset of positive roots. With this
notation the Lie algebra can be decomposed as 
\begin{equation*}
\mathfrak{g}_{n}=\mathfrak{t}_{n}\bigoplus {}_{\alpha \in \Phi
_{n}^{+}}sp\{RE_{\alpha },IE_{\alpha }\}=\mathfrak{t}_{n}\bigoplus sp 
\mathcal{V}_{n}
\end{equation*}
where $sp$ denotes the real span. Thus the dimension of $\mathfrak{g}_{n}$
is equal to $n+|$ $\Phi _{n}|.$

From (\ref{adroot}) it follows that 
\begin{equation}
[H,RE_{\alpha }]=-\alpha (H)IE_{\alpha }\text{ and }[H,IE_{\alpha }]=\alpha
(H)RE_{\alpha }.  \label{brtorus}
\end{equation}
It is also well known that 
\begin{equation*}
[RE_{\alpha },IE_{\alpha }]=\frac{-1}{2i}[E_{\alpha },E_{-\alpha }]
\end{equation*}
is a non-zero element of the maximal torus. It should be noted that if $%
\{\alpha _{j}:j\in J\}\subseteq \Phi _{n}$ is a spanning set for $sp\Phi
_{n} $, then $\{RE_{\alpha _{j}},IE_{\alpha _{j}}]:j\in J\}$ spans $%
\mathfrak{t}_{n}$ .

Since $\{E_{\alpha }\}$ is a Weyl basis, we have 
\begin{align}
[RE_{\alpha },RE_{\beta }] &=cRE_{\alpha +\beta }+dRE_{\beta -\alpha },
\label{bracketaction} \\
[RE_{\alpha },IE_{\beta }] &=cIE_{\alpha +\beta }+dIE_{\beta -\alpha } 
\notag \\
[IE_{\alpha },IE_{\beta }] &=-cRE_{\alpha +\beta }+dRE_{\beta -\alpha }, 
\notag
\end{align}
where $RE_{\gamma }$ and $IE_{\gamma }$ should be understood to be the zero
vector if $\gamma $ is not a root and $c=N_{\alpha ,\beta }/2$, $d=N_{\alpha
,-\beta }/2$.

We refer the reader to \cite{Hu}, \cite{Kn} and \cite{Va} for proofs of
these well known facts and further details on the representation theory of
Lie algebras.

\subsection{Annihilating roots}

We call a root, $\alpha ,$ an \textit{annihilating root of }$X\in t_{n}$ if $%
\alpha (X)=0$ and call $\alpha $ a \textit{non-annihilating root of } $X$
otherwise. The \textit{set of annihilating roots of }$X$, 
\begin{equation*}
\Phi _{X}:=\{\alpha \in \Phi :\alpha (X)=0\},
\end{equation*}
is a root subsystem of $\Phi _{n}$. As we will see, these root subsystems
are critical for understanding properties about orbits and orbital measures,
as are the associated root vectors. We will denote by 
\begin{equation}
\mathcal{N}_{X}:=\{RE_{\alpha },IE_{\alpha }:\alpha \notin \Phi
_{X}\}\subseteq \mathcal{V}_{n},  \label{NX}
\end{equation}
the linearly independent subset of $\mathcal{V}_{n}$ consisting of the real
and imaginary parts of the root vectors corresponding to the
non-annihilating roots of $X$. It is known that $\dim O_{X}=|\mathcal{N}
_{X}| $ \cite[VI.4]{MT}. Indeed, the tangent space at $X$ to $O_{X}$ is
spanned by the vectors in $\mathcal{N}_{X}$ and these are linearly
independent (see the proof of Prop.~\ref{key}).

\subsection{Type of an Element}

The torus of $su(n)$, the classical Lie algebra of type $A_{n-1}$ (or type $%
SU(n)$) consists of the diagonal matrices in $su(n)$. After applying a
suitable Weyl conjugate, any $X$ in the torus can be identified with the $n$%
-vector of the real parts of the diagonal elements, 
\begin{equation*}
X=(\underbrace{a_{1},\dots,a_{1}}_{s_{1}},\dots,\underbrace{a_{m},\dots,a_{m}%
} _{s_{m}}),
\end{equation*}
where the $a_{j}\in \mathbb{R}$ are distinct and $\sum_{j=1}^{m}s_{j}a_{j}=0$%
. This means that $ia_{j}$ is an eigenvalue of the $n\times n$ matrix $X$
with multiplicity $s_{j}$. The set of annihilating roots of $X$ is $\Phi
_{X}=$ $\Psi _{1}\cup \cdots \cup \Psi _{m}$ where 
\begin{align*}
\Psi _{1} &=\{e_{i}-e_{j}:1\leq i\neq j\leq s_{1}\}\text{ and} \\
\Psi _{l} &=\{e_{i}-e_{j}:s_{1}+\cdots +s_{l-1}<i\neq j\leq s_{1}+\cdots
+s_{l}\}\text{ for }l>1.
\end{align*}
Following \cite{GHMathZ}, we say that $X$ is type $SU(s_{1})\times \cdots
\times SU(s_{m})$ as this is the Lie type of its set of annihilating roots.

The torus of $so(2n+1)$, the classical Lie algebra of type $B_{n}$, consists
of block diagonal matrices, with $n$ $2\times 2$ blocks of the form $\left[ 
\begin{array}{cc}
0 & b_{j} \\ 
-b_{j} & 0%
\end{array}
\right] $ having $b_{j}\geq 0$, and a $0$ in the final diagonal position. We
identify $X$ in the torus with the $n$-vector $(b_{1},\dots,b_{n})\in 
\mathbb{R }^{+n}$. Up to a Weyl conjugate, $X$ can thus be identified with
the $n$-vector 
\begin{equation}
X=(\underbrace{0,\dots,0}_{J},\underbrace{a_{1},\dots,a_{1}}_{s_{1}},\dots, 
\underbrace{a_{m},\dots,a_{m}}_{s_{m}})  \label{XinBn}
\end{equation}
where the $a_{j}>0$ are distinct. One can see that $0$ is an eigenvalue of
the $(2n+1)\times (2n+1)$ matrix $X$ with multiplicity $2J+1$ and $\pm
ia_{j} $ are eigenvalues with multiplicity $s_{j}$.

The set of annihilating roots $\Phi _{X}=\Psi _{0}\cup \Psi _{1}\cdots \cup
\Psi _{m}$ where 
\begin{align*}
\Psi _{0} &=\{\pm e_{k},\pm e_{i}\pm e_{j}:1\leq i,j,k\leq J,i\neq j\}\text{
and} \\
\Psi _{l} &=\{e_{i}-e_{j}:J+s_{1}+\cdots +s_{l-1}<i\neq j\leq J+s_{1}+\cdots
+s_{l}\}
\end{align*}
for $l=1,\dots,m$. We will say that $X$ is\textit{\ type} 
\begin{equation*}
B_{J}\times SU(s_{1})\times \cdots \times SU(s_{m}),
\end{equation*}
as this is the Lie type of $\Phi _{X}$. Here by $B_{1}$ we mean the root
subsystem $\{\pm e_{1}\}$, while $SU(1),B_{0}$ and $SU(0)$ are empty (and
typically omitted in the description).

Similarly, if $X$ belongs to the torus of the Lie algebra of type $C_{n}$ or 
$D_{n}$ then, up to a Weyl conjugate, $X$ can be identified with the $n$%
-vector 
\begin{equation*}
X=(\underbrace{0,\dots,0}_{J},\underbrace{a_{1},\dots,a_{1}}_{s_{1}},\dots, 
\underbrace{a_{m},\dots,(\pm )a_{m}}_{s_{m}})
\end{equation*}
where the $a_{j}>0$ are distinct. We remark that the minus sign is needed
only in type $D_{n}$ and only if $J=0$. (This is because the Weyl group in
type $D_{n}$ changes only an even number of signs.) Viewing $X$ as an $%
2n\times 2n$ matrix in $sp(n)$ or $so(2n)$, this means that $0$ is an
eigenvalue of $X$ with multiplicity $2J,$ and $\pm ia_{j}$ are eigenvalues
with multiplicity $s_{j}$.

The set of annihilating roots of $X$ can again be written as $\Phi _{X}=
\Psi _{0}\cup \Psi _{1}\cdots \cup \Psi _{m}$. In this case 
\begin{equation*}
\Psi _{0}=\{\pm 2e_{k},\pm e_{i}\pm e_{j}:1\leq i,j,k\leq J,i\neq j\}
\end{equation*}
when the Lie algebra is type $C_{n}$ and 
\begin{equation*}
\Psi _{0}=\{\pm e_{i}\pm e_{j}:1\leq i,j\leq J,i\neq j\}
\end{equation*}
when the Lie algebra is type $D_{n}$. For $l\geq 1$, the $\Psi _{l}$ are as
in type $B_{n}$, except when $X=(a_{1},\dots,a_{1},\dots,a_{m},\dots,-a_{m})$
in $D_{n}$ when 
\begin{equation*}
\Psi _{m}=\{\pm (e_{i}-e_{j}),\pm (e_{i}+e_{n}):n-s_{m}<i\neq j\leq n-1\}.
\end{equation*}
We will say $X$ is \textit{type} 
\begin{equation*}
C_{J}\,\times SU(s_{1})\times \cdots \times SU(s_{m})\text{ or } D_{J}\times
SU(s_{1})\times \cdots \times SU(s_{m})
\end{equation*}
respectively, as these are the Lie types of $\Phi _{X}$. Here $C_{1}$ is the
subsystem $\{\pm 2e_{1}\}$, $C_{2}$ is $\{\pm 2e_{1},\pm 2e_{2},\pm e_{1}\pm
e_{2}\}$, $D_{2}$ is $\{\pm e_{1}\pm e_{2}\}$ (or type $A_{1}\times A_{1})$, 
$D_{3}$ is defined in the obvious way, and $D_{1}$, $D_{0}\,$, $C_{0}$ are
empty (and often omitted).

Note that there are two distinct subsystems (up to Weyl conjugacy) of
annihilating roots of elements of type $SU(n)$ in $D_{n}$.

\begin{definition}
Suppose $X$ is in the torus of the Lie algebra of type $B_{n}$ and is type $%
B_{J}\,\times SU(s_{1})\times \cdots \times SU(s_{m})$. We will say $X$ is 
\textbf{dominant} $B$ \textbf{type} if $2J\geq \max s_{j}$, and is \textbf{%
dominant} $SU$ \textbf{type} otherwise. We define \textbf{dominant} $C$ 
\textbf{and} $D$ \textbf{type} similarly for $X$ in $C_{n}$ or $D_{n}$.
\end{definition}

It was shown in \cite[Thm.~8.2]{GHMathZ} that for each non-zero $X\in 
\mathfrak{g}_{n}$, there is an integer $k(X)$ such that for $k\geq k(X)$, $%
\mu _{X}^{k}\in L^{1}\bigcap L^{2}(\mathfrak{g}_{n})$ (in particular, $%
\mu_{X}^{k}$ is absolutely continuous with respect to Lebesgue measure) and $%
\mu _{X}^{k}$ is purely singular if $k<k(X)$. A formula was given for $k(X)$
depending only on the type of $X$ and the type of the Lie algebra. For
example, if $X$ is dominant $SU$ type in the Lie algebra of type $B_{n}$, $%
C_{n}$ or $D_{n},$ and not of type $SU(n)$ when the Lie algebra is type $%
D_{n}$, then $k(X)=2$. If $X$ is type $B_{n-1}$, ($C_{n-1}$, $D_{n-1}$ or $%
SU(n-1)$) in the Lie algebra of type $B_{n}$ ($C_{n}$, $D_{n}$ or $SU(n)$),
then $k(X)=n$ and this is the maximal choice required for $k(X)$.

\section{Statement of the Main Result}

\subsection{Eligible and Exceptional Tuples}

We introduce the following terminology.

\begin{notation}
If $X$ is of type $SU(s_{1})\times \cdots \times SU(s_{m})$ in the Lie
algebra of type $A_{n}$, put $S_{X}=\max s_{j}$.

If $X$ is type $B_{J}\times SU(s_{1})\times \cdots \times SU(s_{m})$ in the
Lie algebra of type $B_{n}$, put 
\begin{equation*}
S_{X}=%
\begin{cases}
2J & \text{ if }X\text{ is dominant }B\text{ type} \\ 
\max s_{j} & \text{else}%
\end{cases}%
\end{equation*}
Define $S_{X}$ similarly when $X$ belongs to the Lie algebras of type $C_{n}$
or $D_{n}$.
\end{notation}

If $X\in so(2n+1)$ is dominant $B$ type, then the dimension of the largest
eigenspace of the matrix $X$ is $S_{X}+1$, while if $X$ is dominant $SU$
type, then the dimension of the largest eigenspace is $S_{X}$. In all the
other Lie algebras, $S_{X}$ is the dimension of the largest eigenspace when $%
X$ is viewed as a matrix in the appropriate classical matrix algebra.

\begin{definition}
(i) We will say that the $L$-tuple $(X_{1},X_{2},\dots,X_{L})$ of elements
in the torus of a Lie algebra of type $SU(n+1)$ is \textbf{eligible }in $%
\mathfrak{g}_{n}$ if 
\begin{equation*}
\sum_{i=1}^{L}S_{X_{i}}\leq (L-1)(n+1).
\end{equation*}

(ii) We will say that the $L$-tuple $(X_{1},X_{2},\dots,X_{L})$ of elements
in the torus of a Lie algebra of type $B_{n},C_{n}$ or $D_{n}$ is \textbf{%
eligible} in $\mathfrak{g}_{n}$ if 
\begin{equation}
\sum_{i=1}^{L}S_{X_{i}}\leq (L-1)2n.  \label{eligiblecriteria}
\end{equation}
\end{definition}

\begin{definition}
We will say that $(X_{1},X_{2},\dots,X_{L})\in \mathfrak{t}^{L}$ is an 
\textbf{exceptional tuple} if it is any one of the following:

\begin{itemize}
\item $\mathfrak{g}$ is type $SU(2n)$, $L=2$, $n\geq 2$ and $X_{1}$ and $%
X_{2}$ are both of type $SU(n)\times SU(n)$ (i.e., $X_{i}=(\underbrace{
a_{i},\dots,a_{i}}_{n},\underbrace{-a_{i},\dots,-a_{i}}_{n})$);

\item $\mathfrak{g}$ is type $D_{n}$, $L=2$, $X_{1}$ is type $SU(n)$ and $%
X_{2}$ is either type $SU(n)$ or type $SU(n-1)$ (more precisely, type $%
SU(n-1)\times D_{1}$ or $SU(n-1)\times SU(1)$);

\item $\mathfrak{g}$ is type $D_{4}$, $L=2$, $X_{1}$ is type $SU(4)$ and $%
X_{2}$ is either type $SU(2)\times SU(2)$ and $\Phi _{X_{2}}$ is Weyl
conjugate to a subset of $\Phi _{X_{1}}$, or $X_{2}$ is type $SU(2)\times
D_{2}$;

\item $\mathfrak{g}$ is type $D_{4}$, $L=3$ and $X_{1},X_{2},X_{3}$ are all
of type $SU(4)$ with Weyl conjugate sets of annihilators.
\end{itemize}
\end{definition}

\begin{definition}
We will call $(X_{1},X_{2},\dots,X_{L})$ an \textbf{absolutely continuous
tuple} if $\mu _{X_{1}}\ast \mu _{X_{2}}\ast \cdots \ast \mu _{X_{L}}$ is an
absolutely continuous measure.
\end{definition}

Our main result is that other than for the exceptional tuples, eligibility
characterizes absolute continuity of the convolution product. The proof of
this theorem will occupy most of the remainder of the paper. Here is the
formal statement of the theorem.

\subsection{Main Result}

\begin{theorem}
\label{main}Let $\mathfrak{g}_{n}$ be one of the classical, compact,
connected Lie algebras of type $A_{n}$ with $n\geq 1$, $B_{n}$ with $n\geq
2, $ $C_{n}$ with$\ n\geq 3,$ or $D_{n}$ with $n\geq 4$. Assume non-zero $%
X_{i}$, $i=1,2,\dots,L$ for $L\geq 2,$ belong to the torus of $\mathfrak{g}
_{n} $.

\textrm{(i)} Suppose $(X_{1},X_{2},\dots,X_{L})$ is not an exceptional
tuple. The measure, $\mu _{X_{1}}\ast \mu _{X_{2}}\ast \cdots \ast \mu
_{X_{L}},$ is absolutely continuous with respect to Lebesgue measure on $%
\mathfrak{g}_{n}$ if and only if $(X_{1},X_{2},\dots,X_{L})$ is an eligible
tuple.

\textrm{(ii)} If $(X_{1},X_{2},\dots,X_{L})$ is an exceptional tuple, other
than a pair $(X_{1},X_{2})$ of type $(SU(n),SU(n-1))$ \footnote{%
When we say a pair $(X,Y)$ is of type $(\ast ,\ast \ast )$ we mean that $X$
is of type $\ast $ and $Y$ is of type $\ast \ast $.} in a Lie algebra of
type $D_{n}$ with $n\geq 6$, then the measure $\mu _{X_{1}}\ast \mu
_{X_{2}}\ast \cdots \ast \mu _{X_{L}}$ is not absolutely continuous.
\end{theorem}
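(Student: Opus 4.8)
The plan is to prove the two directions separately, with the main work being the "if" direction (eligibility $\Rightarrow$ absolute continuity) and the non-absolute-continuity claim for exceptional tuples; the "only if" direction I expect to be handled by a dimension count. For the necessity direction, recall that $\mu_{X_1}\ast\cdots\ast\mu_{X_L}$ is supported on $O_{X_1}+\cdots+O_{X_L}$, a compact set that is the image of $O_{X_1}\times\cdots\times O_{X_L}$ under addition, hence of dimension at most $\sum\dim O_{X_i}=\sum|\mathcal N_{X_i}|$. Absolute continuity forces this to be at least $\dim\mathfrak g_n=n+|\Phi_n|$. Using the explicit root systems from Section 2 one translates $|\mathcal N_{X_i}|=|\Phi_n|-|\Phi_{X_i}|$ and the combinatorics of $\Phi_{X_i}$ (which is a product of $SU$-blocks and a $B/C/D$-block of sizes governed by $S_{X_i}$) into precisely the inequality $\sum S_{X_i}\le(L-1)\cdot(\text{rank-dependent constant})$; so non-eligibility yields a support of measure zero. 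This handles part (i)'s forward implication and also shows that when an exceptional tuple fails eligibility the convolution is singular; but the exceptional tuples are exactly the borderline cases where the dimension count is satisfied yet absolute continuity still fails, so a finer argument is needed there.

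For the sufficiency direction, I would follow the strategy the paper announces for Sections 5--6: absolute continuity of $\mu_{X_1}\ast\cdots\ast\mu_{X_L}$ is equivalent to the sum of orbits $\sum O_{X_i}$ having non-empty interior, which in turn (by Sard-type / submersion arguments) reduces to exhibiting group elements $g_1,\dots,g_L\in G_n$ such that the differential of the map $(g_1,\dots,g_L)\mapsto\sum Ad(g_i)X_i$ at that point is surjective onto $\mathfrak g_n$. Using the tangent-space description from Section 2.2, the image of this differential is spanned by $\bigcup_i Ad(g_i)(sp\,\mathcal N_{X_i})$ together with $\mathfrak t_n$, so it suffices to choose the $g_i$ (in practice, products of a few Weyl elements and one-parameter subgroups $\exp(tFE_\beta)$) so that the Weyl-translated non-annihilating root spaces $w_i\cdot\mathcal N_{X_i}$ collectively cover all of $\mathcal V_n$. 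This is a covering problem for positive roots: one needs that for every $\alpha\in\Phi_n^+$ there is some $i$ and some Weyl element $w_i$ with $w_i^{-1}\alpha\notin\Phi_{X_i}$, and moreover that the $RE_\alpha,IE_\alpha$ can be hit independently. The eligibility inequality $\sum(|\Phi_n|-|\Phi_{X_i}|)\ge 2|\Phi_n|$ (equivalently $\sum|\Phi_{X_i}|\le(L-2)|\Phi_n|$, reorganized via $S_{X_i}$) is what makes such a covering combinatorially possible; the bracket relations \eqref{bracketaction} and \eqref{brtorus} are used to promote "spanning mod lower-order terms" to genuine spanning and to recover the torus directions. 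I would organize this as a case analysis on the Lie type and on whether each $X_i$ is dominant $B/C/D$ type or dominant $SU$ type, reducing larger ranks to a base case by an inductive embedding of $\mathfrak g_{n-1}\hookrightarrow\mathfrak g_n$.

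For the exceptional tuples in part (ii), the claim is that these specific pairs (and the one triple in $D_4$) are \emph{not} absolutely continuous even though eligible, so here I need an obstruction, not a construction. The idea is that when $X_1$ and $X_2$ have Weyl-conjugate annihilator systems of a very symmetric shape (two equal $SU(n)$ blocks in $SU(2n)$, or $SU(n)$-type in $D_n$), the non-annihilating root spaces $\mathcal N_{X_1}$ and $\mathcal N_{X_2}$ — even after applying all Weyl translates — fail to span a complementary pair: there is a nonzero functional on $\mathfrak g_n$ (coming from an invariant, e.g. a quadratic or quartic conjugation-invariant polynomial, or a parity constraint on sign-changes in $D_n$) that annihilates the relevant tangent directions, forcing $O_{X_1}+O_{X_2}$ into a proper subvariety. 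Concretely, for $SU(n)\times SU(n)$ in $SU(2n)$ one exploits that both orbits lie in $\{Y:Y^2\text{ has a prescribed spectrum up to }\ldots\}$ so their sum cannot be open; for the $D_n$ cases one uses the Pfaffian or the even-sign-change restriction of the Weyl group. I expect \textbf{this obstruction argument to be the main obstacle}: it requires finding, for each exceptional family, the exact invariant or codimension-one constraint that blocks absolute continuity, and verifying it is sharp (so that dropping any hypothesis in the definition of "exceptional" restores absolute continuity, consistent with part (i)). The single undecided $D_n$ pair of type $(SU(n),SU(n-1))$ with $n\ge 6$ is explicitly excluded from the statement, so I do not attempt it.
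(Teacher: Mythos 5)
Your outline correctly identifies the reduction to the tangent-space spanning condition (the paper's Prop.~\ref{key}) and the broad shape of the argument, but there are three concrete gaps, the first of which is fatal.

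\textbf{The necessity argument does not work.} You propose to derive eligibility from a dimension count: absolute continuity forces $\sum\dim O_{X_i}=\sum|\mathcal N_{X_i}|\ge\dim\mathfrak g_n$, and you claim this ``translates into'' the eligibility inequality $\sum S_{X_i}\le(L-1)\cdot(\mathrm{const})$. It does not. The dimension count is strictly weaker. For instance, take $L=2$ in $B_3$ with $X_1$ of type $SU(3)$ (so $S_{X_1}=3$) and $X_2$ of type $B_2$ (so $S_{X_2}=4$). Then $S_{X_1}+S_{X_2}=7>6$, so the pair is not eligible and, by the theorem, not absolutely continuous. But $|\mathcal N_{X_1}|+|\mathcal N_{X_2}|=(18-6)+(18-8)=22>21=\dim\mathfrak g_3$, so your dimension count is consistent with absolute continuity and gives no obstruction. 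The reason is that $|\Phi_{X_i}|$ depends on the full block decomposition of $X_i$, while $S_{X_i}$ records only the single largest block; these are incommensurable. The paper's Lemma~\ref{Elig} instead exploits the matrix models: if the tuple fails eligibility, the eigenspaces $V_i$ of the largest eigenvalues of $Ad(g_i)(X_i)$ are forced (by a dimension count \emph{inside} $\mathbb R^{2n}$, $\mathbb R^{n+1}$ or $\mathbb R^{2n+1}$) to have a common vector for every choice of $g_i$, so every element of $\sum O_{X_i}$ carries a fixed eigenvalue, hence the sum has empty interior. That is what makes the eligibility constant $(L-1)2n$ (or $(L-1)(n+1)$) appear. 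Relatedly, your restatement of eligibility as $\sum(|\Phi_n|-|\Phi_{X_i}|)\ge 2|\Phi_n|$, equivalently $\sum|\Phi_{X_i}|\le(L-2)|\Phi_n|$, is simply a different (and incorrect) condition, as the same example shows.

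\textbf{The sufficiency argument lacks the key mechanism.} Your description of a ``covering problem for positive roots'' is the right intuition, but the crucial structural idea in the paper is the reduction $X\mapsto X'$ (delete one coordinate from the largest block), which drops the rank by one, preserves eligibility (Lemma~\ref{eligible}), and interacts with an explicitly chosen slice $\Omega=\mathcal V_n\setminus\mathcal V_{n-1}$ in a way that lets Prop.~\ref{general strategy} upgrade spanning in $\mathfrak g_{n-1}$ to spanning in $\mathfrak g_n$ via an auxiliary vector $M$ and a perturbation/determinant argument (Lemma~\ref{elem}). Your sketch of ``promote spanning mod lower-order terms to genuine spanning'' gestures at this, but without the $X'$ construction, the choice of $\Omega_0$, and the analyticity argument in Prop.~\ref{key}, there is no actual induction to run. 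You also do not address the small-rank base cases (e.g.\ $D_4,D_5$), which in the paper require a separate verification via Wright's criterion (Thm.~\ref{WrCriteria}) because the naive reduction lands on exceptional pairs in $D_3\cong A_3$.

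\textbf{The exceptional-tuple arguments need to be replaced, not guessed at.} Your obstruction idea (invariants/Pfaffian/parity) correctly handles the one case that is a pure dimension count (both $X_i$ of type $SU(n)$ in $D_n$, where $|\mathcal N_{X_1}|+|\mathcal N_{X_2}|=|\Phi_n|<\dim\mathfrak g_n$), but the remaining exceptional cases are proved in the paper by transport of structure: the root-system automorphism taking $SU(4)\subset D_4$ to $D_3\subset D_4$ converts the eligible-but-exceptional pair into a non-eligible pair (so Lemma~\ref{Elig} applies), and the $SU(n)\times SU(n)$ in $SU(2n)$ and triple-$SU(4)$ in $D_4$ cases are reduced to the already-established singularity of iterated self-convolutions from \cite{GHMathZ}. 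No new invariant-theoretic obstruction is needed or produced; the ``Pfaffian/parity'' route you propose is not carried out and I see no indication it would close these cases without essentially rediscovering these reductions.
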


\begin{remark}
The characterization of absolute continuity in type $A_{n}$ was previously
established by Wright \cite{Wr}. We will include a proof in this paper as
our approach is completely different and requires little additional effort.
\end{remark}

\begin{remark}
(i) We conjecture that a pair of type $(SU(n),SU(n-1))$ in $D_{n}$ with $%
n\geq 6$ also fails to be absolutely continuous.

(ii) Notice that unlike the case for convolutions of the same orbital
measure (\cite[Thm.~8.2]{GHMathZ}), the property of being absolutely
continuous does not depend only upon the type of the annihilating root
systems of the underlying elements, but also, in some cases, upon their Weyl
conjugacy class.
\end{remark}

In proving both absolute continuity and its failure we will rely crucially
upon the following known geometric properties.

The notation $T_{Z}(O_{X})$ will denote the tangent space to $O_{X}$ at $%
Z\in O_{X}$.

\begin{proposition}
\label{key} The measure $\mu _{X_{1}}\ast \mu _{X_{2}}\ast \cdots \ast \mu
_{X_{L}}$ on $\mathfrak{g}_{n}$ is absolutely continuous with respect to
Lebesgue measure if and only if any of the following hold:

\textrm{(i)} $\sum_{i=1}^{L}O_{X_{i}}\subseteq\mathfrak{g}_{n}$ has
non-empty interior;

\textrm{(ii)} $\sum_{i=1}^{L}O_{X_{i}}\subseteq\mathfrak{g}_{n}$ has
positive Lebesgue measure;

\textrm{(iii)} There exists $g_{i}\in G_{n}$ with $g_{1}=Id$, such that 
\begin{equation}
sp\{Ad(g_{i})(\mathcal{N}_{X_{i}}):i=1,\dots,L\}=\mathfrak{g}_{n},
\label{keyiden}
\end{equation}

\textrm{(iv)} There exists $g_{i}\in G_{n}$ with $g_{1}=Id$, such that 
\begin{equation*}
\sum_{i=1}^{L}T_{Ad(g_{i})(X_{i})}(O_{X_{i}})=\mathfrak{g}_{n}.
\end{equation*}
Furthermore, if the identity holds in (iii) or (iv) for one choice of $%
(g_{2},\dots,g_{L})\in G_{n}^{L-1}$, then it holds for all $%
(g_{2},\dots,g_{L})$ in an open dense subset of $G_{n}^{L-1}$ of full
measure.
\end{proposition}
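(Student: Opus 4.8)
The plan is to prove Proposition~\ref{key} by establishing a cycle of implications linking the four conditions, treating (iii) and (iv) as essentially the same statement (since $T_{Ad(g_i)(X_i)}(O_{X_i}) = Ad(g_i)(T_{X_i}(O_{X_i}))$ and $T_{X_i}(O_{X_i}) = sp\,\mathcal{N}_{X_i}$, as recalled in Section~2.2). The first step is to verify that the tangent space to $O_X$ at $X$ is indeed $sp\,\mathcal{N}_X$ and that the vectors in $\mathcal{N}_X$ are linearly independent: this follows by differentiating the curve $t\mapsto Ad(\exp(tFE_\alpha))(X)$ at $t=0$, which by \eqref{brtorus} produces a nonzero multiple of $FE_\beta$ for the partner part $\beta$, and checking that distinct non-annihilating roots contribute independent directions because the $RE_\alpha, IE_\alpha$ are part of the basis decomposition $\mathfrak{g}_n = \mathfrak{t}_n \oplus sp\,\mathcal{V}_n$. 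This identifies (iii) and (iv) and also shows $\dim O_X = |\mathcal{N}_X|$.

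Next I would handle the equivalence of (i), (ii) and (iv). The implication (i)$\Rightarrow$(ii) is trivial. For (ii)$\Rightarrow$(iv): consider the smooth map $F\colon G_n^{L} \to \mathfrak{g}_n$ given by $F(g_1,\dots,g_L) = \sum_{i=1}^L Ad(g_i)(X_i)$, whose image is $\sum O_{X_i}$. By Sard's theorem, if the image has positive Lebesgue measure then $F$ has a regular point, i.e.\ some point where the differential is surjective; translating by a group element on the left (using $Ad$-invariance of the image, which lets us normalize $g_1 = Id$) and computing the differential of $F$ as the sum of the differentials of the individual orbit maps shows that surjectivity of $dF$ at $(g_1,\dots,g_L)$ is exactly the statement $\sum Ad(g_i)(T_{X_i}(O_{X_i})) = \mathfrak{g}_n$, which is (iv). For (iv)$\Rightarrow$(i): if $dF$ is surjective at some point then $F$ is a submersion there, hence an open map near that point, so the image $\sum O_{X_i}$ contains an open set.

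For the measure-theoretic equivalences — that absolute continuity of $\mu_{X_1}\ast\cdots\ast\mu_{X_L}$ is equivalent to (i) and (ii) — I would argue as follows. The convolution is the pushforward of the product of Haar measures on $G_n^L$ under $F$, so its support is exactly $\overline{\sum O_{X_i}}$; if $\sum O_{X_i}$ has empty interior it has measure zero (being a countable union of images of compact sets under maps whose differential is nowhere surjective, via Sard again, or by a dimension-count argument), so the convolution is carried by a null set and is singular — giving (not (i))$\Rightarrow$(not a.c.). Conversely, when (iv) holds on an open set of parameters, one can use the co-area formula or a local submersion coordinate argument at a regular point of $F$ to push Haar measure forward to something with an $L^1$ density on a neighbourhood, establishing that a nonzero part of $\mu_{X_1}\ast\cdots\ast\mu_{X_L}$ is absolutely continuous; then an $Ad$-invariance / ergodicity argument (the convolution is $Ad$-invariant and its a.c.\ part and singular part are separately $Ad$-invariant, and a nonzero $Ad$-invariant a.c.\ measure must be a.c.\ everywhere on the union of orbits in its support) upgrades this to full absolute continuity.

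Finally, the "furthermore" clause follows from the openness principle for submersions: the set of $(g_1,\dots,g_L)$ where $dF$ is surjective is open (surjectivity of a linear map is an open condition on its matrix, which depends smoothly on the parameters) and, being the nonvanishing locus of a real-analytic function — a sum of squares of $n+|\Phi_n|$ by $n+|\Phi_n|$ minors of the differential, which is real-analytic in the group coordinates — it is either empty or has complement of measure zero; normalizing $g_1 = Id$ by left-translation invariance of the condition, this gives the open dense full-measure subset of $G_n^{L-1}$. The main obstacle I anticipate is the careful bookkeeping in the measure-theoretic direction, specifically justifying that the absolutely continuous part cannot be a proper nonzero part — i.e.\ making the $Ad$-invariance/ergodicity step rigorous — and cleanly invoking Sard's theorem in the infinite-dimensional-looking but actually finite-dimensional-target setting; the differential-geometric equivalences (i)$\Leftrightarrow$(ii)$\Leftrightarrow$(iii)$\Leftrightarrow$(iv) are comparatively routine once the tangent space identification is in place.
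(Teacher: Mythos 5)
Most of your outline matches the paper's proof: the tangent-space identification $T_X(O_X)=sp\,\mathcal{N}_X$ giving (iii)$\Leftrightarrow$(iv), Sard's theorem for (ii)$\Rightarrow$(iv), the submersion/open-mapping argument for (iv)$\Rightarrow$(i), and the analyticity-of-minors argument for the ``furthermore'' clause are all exactly what the paper does.

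The genuine gap is in your argument that (iv) implies absolute continuity. You propose to establish that \emph{some nonzero part} of $\mu_{X_1}\ast\cdots\ast\mu_{X_L}$ is absolutely continuous near a regular value, and then to ``upgrade'' to full absolute continuity by an $Ad$-invariance/ergodicity argument. This upgrade does not work: the adjoint action is far from ergodic with respect to Lebesgue measure on $\mathfrak{g}_n$ (invariant sets abound --- any union of orbits is invariant), so both the absolutely continuous and singular parts of the Lebesgue decomposition of $\mu$ being separately $Ad$-invariant is perfectly compatible with both being nonzero. Indeed a measure such as $\tfrac12(\mu_X^2+\mu_X^3)$ is $Ad$-invariant and, for suitable $X$, has a nonzero a.c.\ part and a nonzero singular part. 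The parenthetical claim you invoke (``a nonzero $Ad$-invariant a.c.\ measure must be a.c.\ everywhere on the union of orbits in its support'') is a tautology about $\mu_{\mathrm{ac}}$ itself and says nothing about whether $\mu_{\mathrm{sing}}=0$. You flagged this step yourself as the likely weak point, and it is.

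The fix is to run the analyticity argument you reserve for the ``furthermore'' clause \emph{before} proving absolute continuity, which is what the paper does. Let $f\colon G_n^L\to\mathfrak{g}_n$, $f(g_1,\dots,g_L)=\sum_i Ad(g_i)X_i$, so that $\mu(E)=m_{G_n^L}(f^{-1}(E))$. Condition (iv) says $df$ has full rank at one point; by analyticity of the minors of $df$, the set where $df$ fails to have full rank is the zero locus of a nontrivial real-analytic function and hence has Haar measure zero. On the (full-measure) open set where $f$ is a submersion, the implicit function theorem gives local product coordinates in which $f$ is a projection, so $m(E)=0$ forces $m_{G_n^L}(f^{-1}(E)\cap\{\text{submersion locus}\})=0$. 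Since the complement of the submersion locus is Haar-null, $m_{G_n^L}(f^{-1}(E))=0$, i.e.\ $\mu(E)=0$. No ergodicity is needed, and no partial-to-full upgrade is needed --- the full-measure submersion locus does all the work.

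One smaller point: your derivation of ``(not (i))$\Rightarrow$(not a.c.)'' via ``empty interior $\Rightarrow$ measure zero'' implicitly uses the contrapositive of (iv)$\Rightarrow$(i) together with Sard applied in the degenerate (everywhere-critical) case; that is fine, but it is cleaner, and closer to the paper, to simply observe that (i)$\Rightarrow$(ii) is trivial, that absolute continuity of a nonzero measure carried by $\sum O_{X_i}$ forces (ii), and then close the cycle (ii)$\Rightarrow$(iv)$\Rightarrow$(i) and (iv)$\Rightarrow$absolute continuity as above.
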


\begin{remark}
We note that (ii) implies that if $\mu _{X_{1}}\ast \mu _{X_{2}}\ast \cdot
\cdot \cdot \ast \mu _{X_{L}}$ is not absolutely continuous, then $\mu
_{X_{1}}\ast \mu _{X_{2}}\ast \cdots \ast \mu _{X_{L}}$ is a purely singular
measure.
\end{remark}

\begin{proof}
This proposition is a compilation of arguments that can be found in \cite%
{GAFA}, \cite{GHMathZ} and \cite{Ra2}. We include a sketch here for the
convenience of the reader. We will show that (iii) and (iv) are equivalent
and then demonstrate the implications (ii)~$\Rightarrow$~(iv)~$\Rightarrow$
absolute continuity and (iv)~$\Rightarrow$~(i). If $\mu _{X_{1}}\ast \mu
_{X_{2}}\ast \cdots \ast \mu _{X_{L}}$ is absolutely continuous or (i)
holds, then (ii) clearly holds so this completes the equivalence.

(iii)~$\Leftrightarrow$~(iv). It is well known (see \cite{GAFA}, \cite[VI.4]%
{MT}) that 
\begin{equation*}
T_{X}(O_{X})=\{[Y,X]:Y\in \mathfrak{g}_{n}\}.
\end{equation*}
Writing $Y=\sum a_{\alpha }RE_{\alpha }+b_{\alpha }IE_{\alpha }+t$ for some $%
t\in \mathfrak{t}_{n}$ and $a_{\alpha },b_{\alpha }$ real, it is easily seen
that $T_{X}(O_{X})=sp\mathcal{N}_{X}$. Further, $T_{Ad(g)X}(O_{x})=Ad(g)
\left( T_{X}(O_{X})\right) =sp\{Ad(g)\mathcal{N}_{x}\}$, proving the
equivalence of (iii) and (iv).

The final comment is an analyticity argument. Assume (iii) holds, for
example, with $g=(Id,g_{2},\dots,g_{L})$. For any $h=(h_{1},h_{2},%
\dots,h_{L}) \in G_{n}^{L}$ , $h_{1}=Id$, consider the collection $Ad(h_{j})Y
$ for $Y\in \mathcal{N}_{X_{j}}$ and $j=1,\dots,L$, as vectors in $\mathbb{R}%
^{\dim \mathfrak{g}_{n}},$ and form the associated matrix $M(h)$. As (iii)
holds with $g$, there is a suitable square submatrix of $M(g)$ with non-zero
determinant. By analyticity of the determinant map, the determinant of the
corresponding square submatrix of $M(h)$ must be non-zero for an open, dense
subset of $h\in G_{n}^{L-1}$ of full measure. The same argument applies to
(iv).

(ii)~$\Rightarrow$~(iv). Consider the addition map $F:O_{X_{1}}\times \cdots
\times O_{X_{L}}\rightarrow \mathfrak{g}_{n}$ given by $F(Y_{1},%
\dots,Y_{L})=\sum_{j=1}^{L}Y_{j}$. The image of $F$ is $%
\sum_{j=1}^{L}O_{X_{j}}$. If the rank of $F$ is not full at any point in its
domain, then Sard's theorem (\cite[p.~286]{La}) implies the measure of the
image of $F$ is zero. Thus the differential of $F$ at some point $%
Y=(Y_{1},\dots,Y_{L}),$ where $Y_{j}=Ad(g_{j})X_{j}$, has full rank. But the
range of the differential of $F$ at $Y$ is $%
\sum_{j=1}^{L}T_{Y_{j}}(O_{X_{j}})$ and hence this sum must be $\mathfrak{g}
_{n}$.

(iv)~$\Rightarrow$~(i). The hypothesis of (iv) guarantees that the map $F$
defined above has full rank at some point $Y$. By the Implicit function
theorem, $F$ is an open map in a neighbourhood of $Y$ and thus $\text{Im }F$
has non-empty interior.

(iv)~$\Rightarrow$~absolute continuity. This is similar again. To see that
the measure $\mu =\mu _{X_{1}}\ast \mu _{X_{2}}\ast \cdots \ast \mu _{X_{L}}$
is absolutely continuous with respect to $m$, we should show that $\mu (E)=0$
whenever $m(E)=0$. Define $f:G_{n}^{L}\rightarrow \mathfrak{\ g}_{n}$ by 
\begin{equation*}
f(g_{1},\dots,g_{L})=F(Ad(g_{1})X_{1},\dots,Ad(g_{L})X_{L}).
\end{equation*}
By definition, $\mu (E)=m_{G_{n}^{L}}(f^{-1}(E))$. By (iv), the differential
of $f$ has full rank at some point. An analyticity argument ensures that
this is true on a subset of $g\in G_{n}^{L}$ of full measure. An application
of the Implicit function theorem shows $f^{-1}(E)$ has $m_{G_{n}^{L}}$ -
measure zero. For more details see \cite[Thm.~2.2]{Ra2}.
\end{proof}

An immediate corollary of this proposition and the main theorem is the
following.

\begin{corollary}
Suppose $(X_{1},X_{2},\dots,X_{L})$ is eligible and not exceptional. Then $%
\sum_{i=1}^{L}O_{X_{i}}$ has non-empty interior. If $(X_{1},X_{2},%
\dots,X_{L}) $ is either not eligible or is exceptional and not type $%
(SU(n),SU(n-1))$ in $D_{n}$, then $\sum_{i=1}^{L}O_{X_{i}}$ has measure zero.
\end{corollary}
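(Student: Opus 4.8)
The plan is to deduce the corollary formally from Theorem~\ref{main} together with the equivalences recorded in Proposition~\ref{key}, splitting into three cases that cover the hypotheses: (a) the tuple is eligible and not exceptional; (b) the tuple is not eligible; (c) the tuple is exceptional but not of type $(SU(n),SU(n-1))$ in $D_n$.

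In case (a) I would invoke Theorem~\ref{main}(i) to conclude that $\mu_{X_1}\ast\cdots\ast\mu_{X_L}$ is absolutely continuous, and then the equivalence of absolute continuity with condition (i) of Proposition~\ref{key} gives that $\sum_{i=1}^{L}O_{X_i}$ has non-empty interior. In cases (b) and (c) the goal is reversed: show that $\mu_{X_1}\ast\cdots\ast\mu_{X_L}$ is \emph{not} absolutely continuous, after which the equivalence of absolute continuity with condition (ii) of Proposition~\ref{key} forces $\sum_{i=1}^{L}O_{X_i}$ to have Lebesgue measure zero (equivalently, by the remark following Proposition~\ref{key}, the convolution is purely singular). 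For case (c) the failure of absolute continuity is immediate from Theorem~\ref{main}(ii): the only exceptional tuples that theorem leaves undecided are the pairs of type $(SU(n),SU(n-1))$ in $D_n$ with $n\ge 6$, and those — indeed all pairs of that type — have been excluded by hypothesis. For case (b) I first need to know the tuple is not exceptional, so that the direction ``not eligible $\Rightarrow$ not absolutely continuous'' of Theorem~\ref{main}(i) applies; this is supplied by the observation below.

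The one step that requires genuine work, and the only real obstacle, is the elementary observation that every exceptional tuple is eligible, whence ``not eligible $\Rightarrow$ not exceptional'' and case (b) is legitimate. This is a finite verification: run through the four families in the definition of an exceptional tuple, compute $S_{X_i}$ from the conventions of the Notation block (recalling that $S_X=2J$ in the dominant $B$, $C$ or $D$ cases and $S_X=\max s_j$ otherwise, and that a Lie algebra of type $SU(n+1)$ has rank $n$, so its eligibility bound is $(L-1)(n+1)$), and check the eligibility inequality in each case. For instance, two elements of type $SU(n)\times SU(n)$ in $SU(2n)$ give $\sum_i S_{X_i}=2n=(L-1)\cdot 2n$; a pair $(SU(n),SU(n-1))$ in $D_n$ gives $\sum_i S_{X_i}=2n-1\le 2n$; the pair $(SU(4),\,SU(2)\times D_2)$ in $D_4$ gives $4+4=8=(L-1)\cdot 8$; and three elements of type $SU(4)$ in $D_4$ give $12\le 16$. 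In every case eligibility holds — often with equality — which closes the argument.
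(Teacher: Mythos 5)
Your proposal is correct and is exactly the paper's intended route: the corollary is stated as an immediate consequence of Theorem~\ref{main} combined with the equivalences in Proposition~\ref{key}, and your three-case split realizes that. The one place where you add something beyond what the paper makes explicit — the check that every exceptional tuple is eligible, which you need so that ``not eligible'' lands you in the hypotheses of Theorem~\ref{main}(i) — is a genuine (if small) gap in reading the theorem strictly as stated, and your finite verification closes it correctly; an alternative would be to cite Lemma~\ref{Elig} directly, which gives ``not absolutely continuous whenever not eligible'' with no exceptionality hypothesis, and that is presumably the shortcut the authors had in mind when calling the corollary immediate.
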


There is a sufficient condition for absolute continuity, established by
Wright in \cite{Wr}, that we will use in the proof of the main theorem to
establish the absolute continuity of certain convolution products of orbital
measures in small rank Lie algebras. We state this result below. By the rank
of a subsystem we mean the dimension of the vector space it spans.

\begin{theorem}
\cite[Thm.~1.3]{Wr}\label{WrCriteria} Let $X_{1},\dots,X_{L}$ belong to the
torus of $\mathfrak{g}_{n}$. Assume 
\begin{equation}
(L-1)\left( |\Phi |-|\Psi |\right) -1\geq \sum_{i=1}^{L}\left( |\Phi
_{X_{i}}|-\min_{\sigma \in W}\left\vert \Phi _{X_{i}}\cap \sigma (\Psi
)\right\vert \right)  \label{WrC}
\end{equation}
for all root subsystems $\Psi \subseteq \Phi $ of rank $n-1$ and having the
property that $sp(\Psi )\cap \Phi =\Psi $. Then $\mu _{X_{1}}\ast \cdots
\ast \mu _{X_{L}}$ is absolutely continuous.
\end{theorem}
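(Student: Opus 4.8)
The plan is to prove Theorem~\ref{WrCriteria} by reducing it to condition (iii) of Proposition~\ref{key}, so the goal is to produce elements $g_i \in G_n$ with $g_1 = Id$ such that $\sum_i Ad(g_i)(sp\,\mathcal{N}_{X_i}) = \mathfrak{g}_n$. Equivalently, by the analyticity remark at the end of Proposition~\ref{key}, it suffices to show that for \emph{generic} choices of $(g_2,\dots,g_L)$ the spaces $Ad(g_i)(T_{X_i}(O_{X_i}))$ together span $\mathfrak{g}_n$; a dimension count alone will not do, since $\sum_i \dim O_{X_i} = \sum_i |\mathcal{N}_{X_i}|$ may exceed $\dim \mathfrak{g}_n$ without the generic sum being everything, so the combinatorial hypothesis \eqref{WrC} must be exactly what rules out the obstruction.

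First I would set up the obstruction dually. The sum $\sum_i Ad(g_i)(sp\,\mathcal{N}_{X_i})$ fails to be all of $\mathfrak{g}_n$ for all $(g_i)$ precisely when there is a proper subspace it is always contained in; by Weyl-group / invariant-theory considerations the ``worst'' obstructing subspaces are, up to conjugacy, of the form $\mathfrak{t}_n \oplus sp\{RE_\alpha, IE_\alpha : \alpha \in \Psi\}$ for a root subsystem $\Psi$, and only the closed subsystems $\Psi$ of rank $n-1$ with $sp(\Psi)\cap\Phi = \Psi$ are maximal among these. For such a $\Psi$, the generic dimension of the projection of $Ad(g)(sp\,\mathcal{N}_X)$ onto the complementary space $sp\{RE_\beta, IE_\beta : \beta \in \Phi \setminus \Psi\}$ equals $\min\{|\mathcal{N}_X|, |\Phi| - |\Psi|\}$ minus a correction measuring how many non-annihilating root directions of $X$ are forced into the $\Psi$-part for \emph{every} conjugate; that correction is exactly $|\Phi_{X}| - \min_{\sigma\in W}|\Phi_X \cap \sigma(\Psi)|$ once one optimizes over which Weyl chamber $X$ sits in relative to $\Psi$. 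Summing the generic projected dimensions over $i$, adding one free dimension from the torus (allowed since $g_1 = Id$ keeps $\mathfrak{t}_n$ available but the relevant count is of the root-space complement), and demanding it reach $|\Phi| - |\Psi|$ gives precisely inequality \eqref{WrC} with its $(L-1)$ factor and the $-1$.

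The key steps in order: (1) reduce ``absolutely continuous'' to the span condition \eqref{keyiden} via Proposition~\ref{key}(iii) and replace the $g_i$ by generic ones; (2) show that if the span is not $\mathfrak{g}_n$ for generic $g_i$ then it lies in a fixed $Ad$-invariant-under-$\mathfrak{t}_n$ proper subspace, and classify the maximal such subspaces as $\mathfrak{t}_n \oplus sp\{FE_\alpha : \alpha \in \Psi\}$ with $\Psi$ ranging over closed rank-$(n-1)$ subsystems; (3) for a single orbit, compute the generic dimension of the image of $Ad(g)(sp\,\mathcal{N}_X)$ in the quotient $\mathfrak{g}_n / (\mathfrak{t}_n \oplus sp\{FE_\alpha:\alpha\in\Psi\})$, identifying it with $\min_{\sigma}( |\Phi|-|\Psi| - (|\Phi_{X_i}| - |\Phi_{X_i}\cap\sigma(\Psi)|) )$ after accounting for overlaps; (4) add up over $i = 1,\dots,L$, use that the $L$ orbits can be conjugated independently so the generic joint image is the sum (capped at $|\Phi|-|\Psi|$), and observe that \eqref{WrC} forces this sum to be full for every $\Psi$; (5) conclude via Proposition~\ref{key} that the convolution is absolutely continuous.

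The main obstacle I anticipate is step (3): correctly computing the generic dimension of the projection of a conjugated tangent space onto the root-space complement of $\Psi$, and in particular verifying that the combinatorial quantity $|\Phi_{X_i}| - \min_{\sigma\in W}|\Phi_{X_i}\cap\sigma(\Psi)|$ is exactly the defect — neither more (which would make the criterion too weak to be true) nor less (too strong). This requires understanding how a generic $Ad(g)$ moves root vectors between the $\Psi$-block and its complement, and showing the only \emph{unavoidable} loss comes from root vectors $E_\alpha$ with $\alpha \in \Phi_{X_i}$ that cannot be rotated out of $sp\{FE_\beta : \beta \in \sigma\Psi\}$ no matter how $X_i$'s chamber is chosen; the bracket relations \eqref{bracketaction} and the structure of $\mathcal{N}_{X_i}$ as the span of the non-annihilating root directions will be the technical tools, together with a transversality/genericity argument for the $Ad(g_i)$ action. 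I would expect the bookkeeping around the ``$-1$'' and the role of $g_1 = Id$ (which contributes the one ``free'' torus-or-boundary dimension) to require care but to be routine once step (3) is in hand.
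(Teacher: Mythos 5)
This theorem is not proved in the paper at all: it is quoted verbatim from Wright \cite{Wr} (his Theorem~1.3) and invoked as a black box. So there is no in-paper proof to compare against; what follows is an assessment of your sketch on its own terms.

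Your general framework — reduce via Proposition~\ref{key}(iii) to a span condition, dualize to an obstruction, and tie the combinatorics of \eqref{WrC} to root subsystems $\Psi$ — is the right setting, but the core of the argument is asserted rather than proved. The crux is your step (2): you claim that if $\sum_i Ad(g_i)\bigl(sp\,\mathcal{N}_{X_i}\bigr)$ is a proper subspace for all $(g_i)$, then some fixed subspace of the form $\mathfrak{t}_n \oplus sp\{RE_\alpha, IE_\alpha : \alpha\in\Psi\}$ contains it, with $\Psi$ a closed rank-$(n-1)$ subsystem. But nothing forces the obstructing subspace to be the \emph{same} for every $(g_i)$, nor to be root-aligned; the span failing for all $(g_i)$ only says that for each tuple there is some nonzero $\lambda = \lambda(g_1,\dots,g_L)$ orthogonal (via the Killing form) to every $Ad(g_i)\bigl(sp\,\mathcal{N}_{X_i}\bigr)$, i.e., with $\lambda \in \bigcap_i Ad(g_i)\,\mathfrak{z}(X_i)$ where $\mathfrak{z}(X_i)=\mathfrak{t}_n\oplus sp\{RE_\alpha,IE_\alpha:\alpha\in\Phi_{X_i}\}$. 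The way the subsystems $\Psi$ actually enter is through $\lambda$ itself: one conjugates $\lambda$ into $\mathfrak{t}_n$ and the subsystem $\Phi_\lambda$ determines its centralizer, so the dimension count has to be organized as an incidence-variety estimate over $(g_2,\dots,g_L,[\lambda])\in G_n^{L-1}\times\mathbb{P}(\mathfrak{g}_n)$, stratified by the Weyl type of $\Phi_\lambda$. Your sketch conflates ``there is always an obstructing $\lambda$'' with ``there is a fixed root-aligned obstructing subspace,'' and this gap is not cosmetic.

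Two further points. First, in step (3) you write that the quantity $|\Phi_{X_i}| - \min_{\sigma\in W}|\Phi_{X_i}\cap\sigma(\Psi)|$ is ``exactly the defect'' in the generic projected dimension, but you give no derivation; you yourself flag this as the main obstacle, and it is precisely the lemma that needs a real argument (a transversality/genericity computation in terms of the bracket action and the $Z(X_i)$-orbit structure of $O_{\lambda}\cap\mathfrak{z}(X_i)$). Second, your account of the ``$-1$'' in \eqref{WrC} — ``one free dimension from the torus, allowed since $g_1 = Id$'' — does not match the mechanics: the $g_1 = Id$ normalization just removes an overall conjugation, and the $-1$ arises more naturally from projectivizing the obstruction $\lambda$ (equivalently, from requiring the incidence variety to have dimension strictly less than $\dim G_n^{L-1}$ so that Sard applies), not from the torus being ``available.'' As it stands, the proposal is a plausible roadmap but does not contain a proof of either the structural claim (2) or the quantitative claim (3), and those are where the entire difficulty lies; you would need to develop the incidence-variety dimension count in full to close these gaps.
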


\section{Tuples That Are Not Absolutely Continuous}

We begin by establishing the necessity of the conditions which give absolute
continuity.

\subsection{Eligibility is a requirement for absolute continuity}

\begin{lemma}
\label{Elig}If $(X_{1},\dots,X_{L})$ is an absolutely continuous $L$-tuple,
then $(X_{1},\dots,X_{L})$ is eligible.
\end{lemma}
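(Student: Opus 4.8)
The plan is to use the characterization of absolute continuity from Proposition~\ref{key}, specifically the dimension-counting criterion (iii): if $\mu_{X_1}\ast\cdots\ast\mu_{X_L}$ is absolutely continuous, then there exist $g_i\in G_n$ such that $sp\{Ad(g_i)(\mathcal{N}_{X_i}):i=1,\dots,L\}=\mathfrak{g}_n$. A necessary consequence is the crude inequality $\sum_{i=1}^L|\mathcal{N}_{X_i}|\geq\dim\mathfrak{g}_n$, since a spanning set must have at least $\dim\mathfrak{g}_n$ elements. Since $|\mathcal{N}_{X_i}|=\dim O_{X_i}=|\Phi_n|-|\Phi_{X_i}|$, this reads $\sum_{i=1}^L(|\Phi_n|-|\Phi_{X_i}|)\geq n+|\Phi_n|$, i.e. $(L-1)|\Phi_n|-n\geq\sum_{i=1}^L|\Phi_{X_i}|$. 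So the whole proof reduces to translating this single inequality into the eligibility condition $\sum S_{X_i}\leq(L-1)(n+1)$ (type $A_n$) or $\sum S_{X_i}\leq(L-1)2n$ (types $B_n,C_n,D_n$).

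The remaining work is a type-by-type computation relating $|\Phi_X|$ to $S_X$. First I would record the values: $|\Phi_n|=n(n+1)$ in type $A_n$, $2n^2$ in type $B_n$, $2n^2$ in type $C_n$, and $2n(n-1)$ in type $D_n$. Then, for $X$ of a given annihilator type, I would compute $|\Phi_X|$ as a sum over the irreducible factors $\Psi_l$. In type $A_n$ with $X$ of type $SU(s_1)\times\cdots\times SU(s_m)$, $|\Phi_X|=\sum_j s_j(s_j-1)$; the key elementary fact is that subject to $\sum s_j=n+1$ the quantity $\sum s_j(s_j-1)=\sum s_j^2-(n+1)$ is maximized, for fixed largest part $S_X=\max s_j$, roughly by making one part as large as possible — but what I actually need is just the lower bound $|\Phi_X|\geq S_X(S_X-1)+(\text{something})$ going the right way. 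Actually the clean approach: since $\sum_j s_j(s_j-1)\leq (\max_j s_j - 1)\sum_j s_j = (S_X-1)(n+1)$ (using $s_j-1\leq S_X-1$ for every $j$, wait that gives the wrong direction) — I need to be a little careful about which direction the bound goes. The honest statement is that $|\Phi_{X_i}|\geq$ the contribution of the largest block, and I will show the crude inequality $\sum|\Phi_{X_i}|\leq(L-1)|\Phi_n|-n$ forces $\sum S_{X_i}\leq(L-1)(n+1)$ by using, for each $i$, a lower bound of the form $|\Phi_{X_i}|\geq(\text{const})\cdot S_{X_i}-(\text{const})$ coming from the largest block alone, then summing. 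In types $B_n,C_n,D_n$ one does the analogous bookkeeping, being careful that for dominant $B$/$C$/$D$ type the relevant block $\Psi_0$ contributes on the order of $S_X^2$ roots (since $S_X=2J$ and $|\Psi_0|$ is roughly $2J^2$ or $2J(J-1)$) while for dominant $SU$ type an $SU(s)$ block with $s=\max s_j=S_X$ contributes $s(s-1)$.

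I expect the main obstacle to be getting the inequalities to go in exactly the right direction with exactly the right constants, especially handling the boundary/small-rank cases and the distinction between dominant $B$-type and dominant $SU$-type (where the formula for $S_X$ changes) so that a single clean inequality $|\Phi_X|\geq f(S_X)$ works uniformly, and then checking that $\sum f(S_{X_i})\geq$ the left side of the eligibility bound times the appropriate constant follows from $\sum|\Phi_{X_i}|\leq(L-1)|\Phi_n|-n$. One subtlety worth flagging is the extra "$-n$" (versus just "$-\dim\mathfrak{t}_n$ absorbed differently") and, in type $A_n$, the appearance of $(n+1)$ rather than $n$ — this is exactly why type $A_n$ has the "rank $+1$" phenomenon, and it comes out naturally because $\sum s_j=n+1$ there whereas in the other types the relevant sum of multiplicities is $2n$. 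I would organize the verification as a short lemma: "for $X$ in the torus, $|\Phi_n|-|\Phi_X|\geq$ (appropriate linear lower bound in terms of $|\Phi_n|$-worth of units minus $S_X$)", prove it in each type, and then the lemma follows by summing and rearranging.
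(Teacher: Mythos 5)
Your proposed route is genuinely different from the paper's, but it contains a gap that cannot be repaired by tightening the type-by-type bookkeeping: the crude dimension count you extract from Proposition~\ref{key}(iii) is a \emph{strictly weaker} necessary condition than eligibility, so the reduction you describe does not exist. Your starting inequality $\sum_{i=1}^{L}|\mathcal{N}_{X_{i}}|\geq \dim \mathfrak{g}_{n}$ is equivalent to $\sum_{i=1}^{L}|\Phi_{X_{i}}|\leq (L-1)|\Phi_{n}|-n$. Now take $\mathfrak{g}_{n}$ of type $B_{4}$ (so $|\Phi_{4}|=32$ and $\dim \mathfrak{g}_{4}=36$), $L=2$, $X_{1}$ of type $B_{3}$, and $X_{2}$ of type $B_{2}$. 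Then $|\Phi_{X_{1}}|=18$ and $|\Phi_{X_{2}}|=8$, so $|\mathcal{N}_{X_{1}}|+|\mathcal{N}_{X_{2}}|=14+24=38\geq 36$ and your dimension inequality is satisfied; yet $S_{X_{1}}+S_{X_{2}}=6+4=10>8=(L-1)\cdot 2n$, so the pair is not eligible. Hence no family of bounds of the form $|\Phi_{X}|\geq f(S_{X})$ can close your argument: $|\Phi_{X}|$ and $S_{X}$ are independent invariants (a single large $B_{J}$ block and a collection of many small $SU$ blocks can realize the same $S_{X}$ with very different values of $|\Phi_{X}|$, and vice versa), and the Lie-algebra dimension count simply does not detect the obstruction that eligibility encodes.

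The paper proves the lemma by a dimension count in a different space, and in the contrapositive direction. Assuming the tuple is not eligible, it views each $Ad(g_{i})(X_{i})$ as a matrix acting on the defining representation space ($\mathbb{C}^{n+1}$ for $A_{n}$, $\mathbb{C}^{2n}$ for $C_{n}$ and $D_{n}$, $\mathbb{C}^{2n+1}$ for $B_{n}$), takes $V_{i}$ to be the eigenspace of the eigenvalue $\alpha_{i}$ of greatest multiplicity, and notes that the failure of eligibility makes $\sum_{i}\dim V_{i}$ large enough that the standard bound $\dim\bigcap_{i}V_{i}\geq \sum_{i}\dim V_{i}-(L-1)\cdot(\text{ambient dimension})$ yields a common eigenvector. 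Then $\sum_{i}\alpha_{i}$ is a fixed eigenvalue of every element of $\sum_{i}O_{X_{i}}$, which is impossible for a set with non-empty interior, and Proposition~\ref{key}(i) gives the conclusion. (Type $B_{n}$ requires a small variant because $0$ is always an eigenvalue in $so(2n+1)$, so the count is pushed to $\dim\bigcap_{i}V_{i}\geq 2$.) That eigenspace-overlap computation is exactly calibrated to the eligibility threshold; the count of root vectors in $\mathfrak{g}_{n}$ is not.
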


\begin{proof}
Suppose the $L$-tuple, $(X_{1},\dots,X_{L})\in \mathfrak{g}_{n}^{L},$ is not
eligible, that is, 
\begin{equation*}
\sum_{i=1}^{L}S_{X_{i}}\geq (L-1)2n+1\text{ (or }(L-1)(n+1)+1\text{ if } 
\mathfrak{g}_{n}\text{ is type }A_{n}.)
\end{equation*}
Let $\alpha _{i}$ be the eigenvalue of $X_{i}$ with greatest multiplicity
(where we view each $X_{i}$ as a complex matrix of the appropriate size
depending on the Lie type of $\mathfrak{g}_{n}$) and let $g_{i}$ belong to
the associated Lie group, $G_{n}$. Let $V_{i}$ be the eigenspace of $%
Ad(g_{i})(X_{i})$ corresponding to the eigenvalue $\alpha _{i}$.

If $\mathfrak{g}_{n}$ is of type $C_{n}$ or $D_{n}$, then $Ad(g_{i})(X_{i})$
are $2n\times 2n$ matrices and $\dim V_{i}=S_{X_{i}}$, so 
\begin{equation*}
\sum_{i=1}^{L}\dim V_{i}=\sum_{i=1}^{L}S_{X_{i}}\geq (L-1)2n+1.
\end{equation*}
We deduce that 
\begin{align*}
&\dim \bigcap\limits_{i=1}^{L}V_{i} \\
=\ &\sum_{i=1}^{L}\dim V_{i}-\left( \dim (V_{1}+V_{2})+\dim ((V_{1}\cap
V_{2})+V_{3})+\cdots +\dim (\bigcap\limits_{i=1}^{L-1}V_{i}+V_{L})\right) \\
\geq\ &(L-1)2n+1-2n(L-1)\geq 1,
\end{align*}
and hence the matrices, $Ad(g_{i})(X_{i}),$ have a common eigenvector, $v$.
As 
\begin{equation*}
\sum_{i=1}^{L}Ad(g_{i})(X_{i})(v)=\sum_{i=1}^{L}\alpha _{i}v,
\end{equation*}
it follows that $\sum_{i}\alpha _{i}$ is an eigenvalue of $%
\sum_{i}Ad(g_{i})(X_{i})$. Since $\sum_{i}Ad(g_{i})(X_{i})$ is an arbitrary
element of $O_{X_{1}}+\cdots +O_{X_{L}}$, one can see that every element of $%
\sum_{i}O_{X_{i}}$ has eigenvalue $\sum_{i}\alpha _{i}$. This is impossible
if $O_{X_{1}}+\cdots +O_{X_{L}}$ has non-empty interior, thus an application
of Prop.~\ref{key}(i) allows us to conclude that $\mu _{X_{1}}\ast \cdots
\ast \mu _{X_{L}}$ is not absolutely continuous.

The argument is similar if $\mathfrak{g}_{n}$ is type $A_{n},$ viewing $%
X_{i} $ as matrices in $su(n+1)$, acting on $\mathbb{R}^{n+1}$.

In the case when $\mathfrak{g}_{n}$ is type $B_{n}$ we require a slight
variation on the argument since every matrix in the Lie algebra $so(2n+1)$
(the model for type $B_{n})$ has $0$ as an eigenvalue. We use the same
notation as above and first observe that if all $X_{i}$ are dominant $B$
type, then all $\alpha _{i}=0$ and $\dim V_{i}=S_{X_{i}}+1$. Thus $%
\sum_{i=1}^{L}\dim V_{i}\geq (L-1)2n+L+1$. Since the vector spaces $V_{i}$
are subspaces of $\mathbb{R}^{2n+1}$, it follows that 
\begin{equation*}
\dim \bigcap\limits_{i=1}^{L}V_{i}\geq (L-1)2n+L+1-(2n+1)(L-1)\geq 2.
\end{equation*}
Consequently, $0$ is an eigenvalue of every element of $O_{X_{1}}+\cdots
+O_{X_{L}}$ of multiplicity at least two. Again, we can conclude that $%
O_{X_{1}}+\cdots +O_{X_{L}}$ has empty interior and therefore $%
(X_{1},\dots,X_{L})$ is not an absolutely continuous tuple.

If, instead, precisely one $X_{i}$ is dominant $SU$ type, with eigenvalue $%
\alpha \neq 0$ of maximum multiplicity, then $\sum \dim V_{i}\geq (L-1)2n+L$%
. This shows that $\dim \bigcap\limits_{i=1}^{L}V_{i}$ has dimension at
least one and hence every element of $O_{X_{1}}+\cdots +O_{X_{L}}$ has $%
\alpha $ as an eigenvalue, again a contradiction if $(X_{1},\dots,X_{L})$ is
an absolutely continuous tuple.

If two or more $X_{i}$ are dominant $SU$ type, then $(X_{1},\dots,X_{L})$ is
automatically eligible.
\end{proof}

\subsection{Exceptional tuples that are not absolutely continuous}

\begin{lemma}
\label{Excep}Suppose $(X_{1},\dots,X_{L})$ is an exceptional tuple and is
not a pair $(X_{1},X_{2})$ of type $(SU(n),SU(n-1))$ in $D_{n}$ where $n\geq
6$. Then $(X_{1},\dots,X_{L})$ is not an absolutely continuous tuple.
\end{lemma}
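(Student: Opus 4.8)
The plan is to verify, case by case, that each of the four families of exceptional tuples listed in the definition (other than the one excluded pair of type $(SU(n),SU(n-1))$ in $D_n$ with $n\ge 6$) fails to be absolutely continuous. By Proposition~\ref{key}, it suffices to show in each case that $\sum_i O_{X_i}$ has empty interior, and the natural obstruction to non-empty interior is that every element of $\sum_i O_{X_i}$ is forced to satisfy some nontrivial polynomial constraint on its eigenvalues. The eligibility count of Lemma~\ref{Elig} just barely fails to rule these out, so the point is that a finer invariant than the dimension of the largest eigenspace is being violated; I expect the extra structure to come either from a second eigenspace being large enough to force a common vector on which $\sum_i Ad(g_i)(X_i)$ has a prescribed eigenvalue, or from a determinant/trace identity. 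Concretely, for the $SU(2n)$ type-$SU(n)\times SU(n)$ pair, each $X_i=\mathrm{diag}(a_i,\dots,a_i,-a_i,\dots,-a_i)$ acting on $\mathbb{C}^{2n}$ has \emph{two} eigenspaces of dimension $n$; for any $g_2$ one has $\dim(V_1^+\cap g_2 V_2^+)+\dim(V_1^+\cap g_2 V_2^-)\ge n+n-2n$ is not quite enough, but considering all four intersections $V_1^{\pm}\cap g_2 V_2^{\pm}$ inside $\mathbb{C}^{2n}$, a dimension count shows at least one of them is nonzero, say $V_1^{\varepsilon_1}\cap g_2 V_2^{\varepsilon_2}$, giving a common eigenvector $v$ with $(X_1+Ad(g_2)X_2)v=(\varepsilon_1 a_1+\varepsilon_2 a_2)v$; this alone does not yet give a contradiction, so one must instead invoke that $X_i$, being $\pm a_i$-scalar on complementary halves, satisfies $X_i^2=-a_i^2 I$, whence $X_1+Ad(g_2)X_2$ has a constrained characteristic polynomial (its square is $-a_1^2-a_2^2$ plus an anticommutator term), and in fact the sum of two such matrices lies in a proper subvariety of $su(2n)$ — this I would make precise by noting $\det(X_1+Ad(g_2)X_2 - \lambda) $ is a polynomial in $\lambda^2$ together with lower-order symmetric data, contradicting non-empty interior.

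For the $D_n$, $L=2$, $(SU(n),SU(n))$ and $(SU(n),SU(n-1))$ cases with the relevant small $n$, I would argue similarly but exploit that in $so(2n)$ an element of type $SU(n)$ is, up to conjugacy, $\pm a\cdot J$ on the two $n$-dimensional isotropic subspaces, so again $X^2=-a^2 I$; the sum of two such (or one such and a type-$SU(n-1)$ element, which has $0$ as an eigenvalue of multiplicity $2$ on $\mathbb{C}^{2n}$) will have $0$ or some $\varepsilon_1 a_1+\varepsilon_2 a_2$ as a forced eigenvalue when $n$ is small enough that the intersection-of-eigenspaces count succeeds, and one checks the count works for all $n\ge 4$ (the case $(SU(n),SU(n))$) and for $4\le n\le 5$ (the case $(SU(n),SU(n-1))$), which is exactly the range asserted. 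For the two $D_4$ cases ($L=2$ with $X_2$ of type $SU(2)\times SU(2)$ with $\Phi_{X_2}$ Weyl-conjugate into $\Phi_{X_1}$, or type $SU(2)\times D_2$; and $L=3$ with all $X_i$ of type $SU(4)$ with Weyl-conjugate annihilators), I would likely switch to Proposition~\ref{key}(iii) and argue directly that $sp\{\mathcal{N}_{X_1},Ad(g_2)\mathcal{N}_{X_2},\dots\}$ cannot exhaust $\mathfrak{g}_4=so(8)$ because the root vectors of $X_1$ of type $SU(4)$ only span a $12$-dimensional subspace of the $28$-dimensional $so(8)$, and the congruence / triality constraints coming from the annihilator-conjugacy hypothesis prevent the $Ad(g_i)$-images of the $X_j$ contributions from covering a complementary $16$-dimensional space — here a careful bookkeeping of how many new dimensions each orbit can contribute, using $|\Phi_{X_i}|$ and the overlap forced by Weyl conjugacy, should yield $\dim(\sum T) \le 27 < 28$.

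The main obstacle I anticipate is the first ($SU(2n)$, $SU(n)\times SU(n)$) family: the crude common-eigenvector argument does \emph{not} immediately produce a contradiction, because having a common eigenvector for $X_1$ and $Ad(g_2)X_2$ only pins down one eigenvalue of the sum, not a global constraint; what actually fails is subtler — the \emph{full spectrum} of $X_1+Ad(g_2)X_2$ is confined to a proper subvariety, essentially because the map $(g_2)\mapsto X_1+Ad(g_2)X_2$ lands in the set of matrices $Y\in su(2n)$ for which there exists an involution $P$ (namely $P=\tfrac{1}{a_1}X_1$ scaled) with $PY+YP$ of a special form. Making this rigorous without brute-force computation is delicate; the cleanest route is probably dimensional: compute $\dim O_{X_1}+\dim O_{X_2}$ for this type and observe it equals exactly $\dim su(2n)-1$ only when $n$ is in the exceptional range, but that alone gives eligibility, not its failure, so the genuinely new input is that the addition map $F$ has a \emph{symmetry} (an intertwining involution) reducing its rank everywhere by at least one — this symmetry argument, adapting the idea behind Wright's exceptional case in type $A$, is the heart of the proof and the step I would spend the most care on; for the remaining families the eigenspace-intersection bookkeeping, while fiddly, is routine once the right count is set up.
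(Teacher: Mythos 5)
Your proposal points in broadly the right direction (use Proposition~\ref{key} to detect an obstruction to full span/non-empty interior), but the concrete mechanisms you propose do not work as stated, and you miss the paper's key technical tool.

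First, the eigenspace-intersection counting you lean on is exactly the wrong instrument for these borderline cases. Eligibility is defined so that $\sum_i S_{X_i}\leq(L-1)\cdot 2n$, which means the dimension count of Lemma~\ref{Elig} \emph{saturates} rather than overshoots. For a pair $(SU(n),SU(n))$ in $D_n$ you have $\dim V_1+\dim V_2=n+n=2n$, giving $\dim(V_1\cap g_2V_2)\geq 0$, i.e.\ nothing; for $(SU(n),SU(n-1))$ you get $n+(n-1)=2n-1<2n$ and the intersection is again not forced. You acknowledge this weakness and propose to patch it via the constraint $X_i^2=-a_i^2 I$ and an ``intertwining involution'' argument, but that step is the actual content of the lemma and you do not carry it out; as it stands it is a gap, not a proof.

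Second, and more importantly, you miss the central trick the paper uses for the $D_n$ exceptional cases: an automorphism $\pi$ of the root system of $D_4$ (and $D_5$) that carries a root subsystem of type $SU(4)$ onto one of type $D_3$ while preserving the Cartan matrix. This extends to a Lie algebra automorphism, so $\pi$ sends the tangent-space condition of Prop.~\ref{key}(iv) for $(X_1,X_2)$ to the same condition for $(\pi(X_1),\pi(X_2))$. Under $\pi$, the pair $(SU(4),SU(3))$ becomes $(D_3,SU(3))$, $(SU(4),SU(2)\times D_2)$ becomes $(D_3,SU(2)\times D_2)$, and $(SU(4),SU(2)\times SU(2))$ (with $\Phi_{X_2}\subseteq\Phi_{X_1}$) becomes $(D_3,D_2)$; all of these are \emph{not eligible}, so Lemma~\ref{Elig} applies directly, and the failure of absolute continuity transfers back along $\pi$. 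Your proposed ``bookkeeping of how many new dimensions each orbit can contribute'' for the $D_4$ cases is too loose to deliver this, since eligibility already means the crude dimension count is consistent with spanning.

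Third, for the two cases where a global dimension count does work, the paper uses much simpler arguments than what you sketch. For $(SU(n),SU(n))$ in $D_n$ the observation is just that $|\mathcal{N}_{X_1}|+|\mathcal{N}_{X_2}|=|\Phi_n|<|\Phi_n|+n=\dim\mathfrak{g}_n$, so $sp\{Ad(g_1)\mathcal{N}_{X_1},Ad(g_2)\mathcal{N}_{X_2}\}$ can never fill $\mathfrak{g}_n$ -- no eigenvalue or subvariety analysis is needed, and the argument works for every $n$ in one line. For the $SU(2n)$ pair of type $SU(n)\times SU(n)$ and the $D_4$ triple of $SU(4)$'s with Weyl-conjugate annihilators, the paper simply cites earlier results from \cite{GHMathZ}, which already established that the relevant span can never be the full algebra; your determinant/subvariety program for the $SU(2n)$ case would, if completed, be a genuinely different and much longer route, and you correctly identify it as delicate, but you do not complete it. In summary: the $\mathcal{N}_X$-cardinality count, the root-system automorphism $\pi$, and the citations are all missing from your outline, and the eigenspace intersection bookkeeping you substitute for them does not close.
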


\begin{proof}
We will need separate arguments for the various exceptional tuples.

(i) Suppose $X_{1}$ and $X_{2}$ are both type $SU(n)$ in the Lie algebra $%
D_{n}$. Observe that 
\begin{equation*}
\dim \left( sp\{Ad(g_{i})(\mathcal{N}_{X_{i}}):i=1,2\}\right) \leq |\mathcal{%
\ N}_{X_{1}}|+|\mathcal{N}_{X_{2}}|.
\end{equation*}
In this case, $|\mathcal{N}_{X_{i}}|=|\Phi _{n}|/2$. As the dimension of the
Lie algebra is $|\Phi _{n}|+n$ it is clearly impossible for $sp\{Ad(g_{i})( 
\mathcal{N}_{X_{i}}):i=1,2\}$ to be the full Lie algebra. Thus Prop.~\ref%
{key}(iii) proves that this pair is not absolutely continuous.

(ii) Suppose $X_{1}$ and $X_{2}$ are of types $SU(n)$ and $SU(n-1)$,
respectively, in $D_{n}$ with $n=4$ or $5$. For this problem, we will use
the fact that a root system of type $SU(4)$ is isomorphic to one of type $%
D_{3}$. We will explain the argument for $n=4$ and leave $n=5$ as an
exercise.

Let $\pi $ be an automorphism of the root system of type $D_{4}$ (an
isomorphism that preserves the Cartan matrix) that maps the annihilating
roots of $X_{1}$ (those of type $SU(4)$) onto a root subsystem of type $%
D_{3} $. This automorphism extends to an automorphism on the torus of $D_{4}$
which maps $X_{1}$ to the element $\pi (X_{1})$ whose set of annihilating
roots is the $D_{3}$ root subsystem, and it maps $X_{2}$ to the element $\pi
(X_{2})$ whose set of annihilating roots is isomorphic to those of $X_{2}$
and hence is type $SU(3)$ (as this is unique up to Lie isomorphism). It
induces a Lie algebra isomorphism that we also call $\pi $. We have $\pi
(O_{X_{j}})=O_{\pi (X_{j})}$ and 
\begin{equation*}
\pi (T_{Ad(g_{j})(X_{j})}(O_{X_{j}}))=T_{Ad(\pi (g_{j}))(\pi
(X_{j}))}(O_{\pi (X_{j})})
\end{equation*}
where if $g_{j}=\exp H_{j}$, then $\pi (g_{j})=\exp \pi (H_{j})$.

The pair $(\pi (X_{1}),\pi (X_{2}))$ is not eligible in $D_{4}$ as $S_{\pi
(X_{1})}=6$ and $S_{\pi (X_{2})}=3$, so by our previous lemma it is not an
absolutely continuous pair. Consequently, Prop.~\ref{key}(iv) implies that 
\begin{equation*}
\dim \left( \sum_{i=1}^{2}T_{Ad(\pi (g_{i}))\pi (X_{i})}(O_{\pi
(X_{i})})\right) <\dim D_{n}
\end{equation*}
for any choices of $g_{1},g_{2}$. But then a similar statement holds for $%
\sum_{i=1}^{2}T_{Ad(g_{j})X_{j}}(O_{X_{j}})$ and thus $(X_{1},X_{2})$ is not
an absolutely continuous pair.

(iii) When $(X_{1},X_{2})$ is a pair of type $(SU(4),$ $SU(2)\times D_{2})$
in $D_{4}$ the arguments are similar. The Lie isomorphism, $\pi ,$ that maps
the subsystem of type $SU(4)$ onto one of type $D_{3}$ must preserve the
type of the root subsystem of type $SU(2)\times D_{2}$. But the pair $(\pi
(X_{1}),\pi (X_{2}))$ is not eligible and hence neither it, nor the original
pair, can be absolutely continuous.

Next, suppose $X_{1}$ is type $SU(4)$ and $X_{2}$ is type $SU(2)\times SU(2)$
in $D_{4}$ with the subsystem, $\Phi _{X_{2}},$ Weyl conjugate to a subset
of the subsystem $\Phi _{X_{1}}$. Since any Weyl conjugate of $X_{2}$
generates the same orbit as $X_{2}$ there is no loss of generality in
assuming $\Phi _{X_{2}}\subseteq \Phi _{X_{1}}$. Consider the same Lie
isomorphism $\pi $ again. Then $\pi (\Phi _{X_{2}})\subseteq\pi (\Phi
_{X_{1}})$ has the same Lie type as $\Phi _{X_{2}}$. But the only subsystems
of type $D_{3}$ that are isomorphic to type $SU(2)\times SU(2)$ are of the
form $\{\pm e_{i}\pm e_{j}\}$ for some $i\neq j,$ and hence are type $D_{2}$%
. Being of type $(D_{3},D_{2})$, the pair $(\pi (X_{1}),\pi (X_{2}))$ is not
eligible and therefore $(X_{1},X_{2})$ is not absolutely continuous.

(iv) Assume $X_{1},X_{2},X_{3}$ are each of type $SU(4)$ in $D_{4}$, with
Weyl conjugate sets of annihilators. As the annihilators are Weyl conjugate,
for each $i=1,2,3$ there exist $h_{i}$ in the Lie group of type $D_{4}$ such
that $Ad(h_{i})(\mathcal{N}_{X_{1}})=\mathcal{N}_{X_{i}}$. Therefore there
exist $g_{i}$ in the group such that 
\begin{equation*}
sp\{Ad(g_{i})(\mathcal{N}_{X_{i}}):i=1,2,3\}=\mathfrak{g}
\end{equation*}
if and only if 
\begin{equation*}
sp\{Ad(g_{i}h_{i})(\mathcal{N}_{X_{1}}):i=1,2,3\}=\mathfrak{g}.
\end{equation*}
But the latter was shown to be impossible in the proof of \cite[Thm.~8.2]%
{GHMathZ}.

(v) The argument is similar if $X_{1}$ and $X_{2}$ are both type $%
SU(n)\times SU(n)$ in the Lie algebra of type $SU(2n)$. In this case, $%
\mathcal{N}_{X_{1}}$ and $\mathcal{N}_{X_{2}}$ are Weyl conjugate and it was
shown in \cite[Prop.~5.1]{GHMathZ} that there is no $g\in SU(2n)$ such that $%
sp\{Ad(g)\mathcal{N}_{X_{1}},\mathcal{N}_{X_{1}}\}=su(2n)$.
\end{proof}

\section{Proving Absolute Continuity - Main Ideas}

\subsection{General Strategy}

Our proof that the eligible, non-exceptional tuples are absolutely
continuous will proceed by induction on the rank of the Lie algebra. The
reduction is based upon the following idea.

\begin{notation}
Suppose $X$ in the torus of the Lie algebra of type $SU(n)$, $B_{n}$, $C_{n}$
or $D_{n}$ is identified (after a suitable Weyl conjugate) with the $n$%
-vector 
\begin{equation*}
(\underbrace{0,\dots,0}_{J},\underbrace{a_{1},\dots,a_{1}}_{s_{1}},\dots, 
\underbrace{a_{m},\dots,(\pm )a_{m}}_{s_{m}})
\end{equation*}
where $s_{1}=\max s_{j}$ and $J=0$ in the case of type $SU(n)$. Define the
element $X^{\prime }\in \mathfrak{t}_{n-1}$ by 
\begin{equation}
X^{\prime }= 
\begin{cases}
(\underbrace{0,\dots,0}_{J-1},\underbrace{a_{1},\dots,a_{1}}_{s_{1}},\dots, 
\underbrace{a_{m},\dots,(\pm )a_{m}}_{s_{m}}) & \text{ if }2J\geq s_{1} \\ 
(\underbrace{0,\dots,0}_{J},\underbrace{a_{1},\dots,a_{1}}_{s_{1}-1},\dots, 
\underbrace{a_{m},\dots,(\pm )a_{m}}_{s_{m}}) & \text{if }2J<s_{1}%
\end{cases}
\label{X'}
\end{equation}
\end{notation}

This means, for example, that if $X$ has type $B_{J}\times SU(s_{1})\times
\cdots \times SU(s_{m})$ where $s_{1}=\max s_{j}$, then $X^{\prime }$ has
type $B_{J-1}\times SU(s_{1})\times \cdots \times SU(s_{m})$ if $X$ is
dominant $B$ type and $X^{\prime }$ has type $B_{J}\times SU(s_{1}-1)\times
\cdots \times SU(s_{m})$ if $X $ is dominant $SU$ type. If $X$ in $SU(n)$
has type $SU(s_{1})\times \cdots \times SU(s_{m})$, then $S_{X^{\prime
}}=S_{X}-1$ if $s_{1}>\max_{j\geq 2}s_{j}$, and $S_{X^{\prime }}=S_{X}$
otherwise. In the latter case $S_{X}\leq n/2$.

We can embed $\mathfrak{t}_{n-1}$ into $\mathfrak{t}_{n}$ by taking the
standard basis vectors $e_{1},\dots,e_{n}$ in $\mathbb{R}^{n}$ (or $%
e_{1}-e_{n+1},\dots,e_{n}-e_{n+1}$ in $\mathbb{R}^{n+1}$ in the case of type 
$SU(n+1)$) as the basis for $\mathfrak{t}_{n}$ and taking the vectors $%
e_{2},\dots,e_{n}$ (resp., $e_{2}-e_{n+1},\dots,e_{n}-e_{n+1})$ as the basis
for $\mathfrak{t}_{n-1}$. This also gives a natural embedding of $\Phi _{n-1}
$ into $\Phi _{n}$ and together these give an embedding of $\mathfrak{g}%
_{n-1}$ into $\mathfrak{g}_{n}$, an embedding of $\mathcal{V}_{n-1}$ into $%
\mathcal{V }_{n}$ and an embedding of $G_{n-1}$ into $G_{n}$. We will also
view $X^{\prime }$ as an element of $\mathfrak{t}_{n}$ in the natural way.

An induction argument will be applicable because of the following lemma.

\begin{lemma}
\label{eligible}If $(X,Y)$ is an eligible pair in $\mathfrak{g}_{n}$ and $%
X,Y $ are not both of type $SU(m)\times SU(m)$ in the Lie algebra of type $%
SU(2m) $, then the reduced pair, $(X^{\prime },Y^{\prime }),$ is eligible in 
$\mathfrak{g}_{n-1}$.
\end{lemma}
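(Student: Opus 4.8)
The plan is to proceed by a direct case analysis according to the Lie types of the elements $X$ and $Y$, exploiting the fact that eligibility is expressed purely in terms of the quantities $S_{X}, S_{Y}$ and the rank $n$, together with the fact that passing from $X$ to $X'$ decreases $S_X$ by exactly $1$ except in the ``balanced $SU$'' case where $S_{X'}=S_X$ but then $S_X\le n/2$ (and analogously for the $B,C,D$ dominant types, using the defined $X'$). First I would record the precise behaviour of $S$ under the reduction: in types $B_n,C_n,D_n$ one always has $S_{X'}=S_X-1$ (either $2(J-1)=2J-2$ in the dominant $B/C/D$ case, or $\max s_j$ stays but we reduced a non-maximal block — wait, no: in the dominant $SU$ case $X'$ has type with $SU(s_1-1)$, so $S_{X'}=s_1-1=S_X-1$ provided $s_1-1\ge \max_{j\ge2}s_j$; if not then $S_{X'}=\max_{j\ge 2}s_j\ge s_1-1$, i.e. $S_{X'}=S_X$ but $S_X=s_1\le$ something forcing slack). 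So the genuinely delicate point is isolating exactly when $S_{X'}=S_X$ rather than $S_X-1$, and checking that in those ``flat'' cases there is enough slack in the original eligibility inequality to absorb the drop in rank from $n$ to $n-1$.

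Concretely, eligibility in type $SU(n+1)$ reads $S_X+S_Y\le n+1$, and in the $B_n,C_n,D_n$ cases $S_X+S_Y\le 2n$; after reduction the target inequalities are $S_{X'}+S_{Y'}\le n$ and $S_{X'}+S_{Y'}\le 2n-2$ respectively. So I would split into three cases. Case 1: both reductions drop by $1$, i.e. $S_{X'}=S_X-1$ and $S_{Y'}=S_Y-1$; then $S_{X'}+S_{Y'}=S_X+S_Y-2$, and the reduced inequality follows immediately from the original one (the right side drops by $2$ in the classical cases, by $1$ in type $SU$, and $2\ge 1$, $2\ge 2$). Case 2: exactly one of the two, say $S_{Y'}=S_Y$ (so $S_{X'}=S_X-1$); then as noted $S_Y\le n/2$ (type $SU$) or $S_Y$ equals $\max s_j$ which is small relative to $2n$ in the $B/C/D$ flat case, and I would check $S_{X'}+S_{Y'}=S_X+S_Y-1$ against the reduced bound using the a-priori bound on $S_Y$ plus the trivial bound $S_X\le n+1$ (resp. $\le 2n$). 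Case 3: both are flat, $S_{X'}=S_X$, $S_{Y'}=S_Y$; then $S_X,S_Y$ are each at most roughly half the rank, and the reduced inequality is again easy, though this is precisely where the excluded hypothesis — $X,Y$ not both $SU(m)\times SU(m)$ in $SU(2m)$ — is needed, since there $S_X=S_Y=m$ and $S_{X'}=S_{Y'}=m$ with $2m=n$, $n+1=2m+1$, giving $S_{X'}+S_{Y'}=2m> n=2m-1$... wait $2m$ vs target $n=2m-1$: indeed $2m>2m-1$, so the reduced pair would fail eligibility. That is the one configuration the lemma must exclude.

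The main obstacle I expect is Case 3 bookkeeping across all four Lie types: verifying that whenever $X$ is ``flat'' (so $S_X=\max_{j\ge2}s_j$ after reducing a top block that was tied, or $2J<s_1$ with a tie among the $a$-blocks, etc.) one genuinely has $S_X\le n/2$ or $2J\le$ a correspondingly small number, and that these bounds for both $X$ and $Y$ sum to at most the reduced rank bound — with the sole exception being the $SU(m)\times SU(m)$ pair in $SU(2m)$. I would handle this by writing $S_X\le \lfloor (\text{rank or }2n)/2\rfloor$ in each flat subcase (the point being that a tie for the maximal eigenspace dimension forces $2S_X\le$ the ambient size), then noting the two such bounds add up to at most the ambient size, which is one more than the reduced bound in type $SU(2m)$ and at most the reduced bound in the other types. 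The only place this fails by exactly $1$ is $SU(2m)$ with equality in both flat bounds, which is exactly the excluded case; everything else has strict-enough slack. A secondary, minor obstacle is keeping the type-$D_n$ quirk (the $SU(n)$ element with a $\pm$ sign and the modified $\Psi_m$) from causing trouble — but since $S_X$ for that element is still $\max s_j=n$ or the relevant block size, the reduction formula \eqref{X'} and the value of $S_{X'}$ are unaffected, so no special treatment is required beyond noting this.
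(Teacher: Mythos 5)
Your high-level plan (track the behaviour of $S$ under the reduction and split into cases) is the right idea, and your analysis of the type $A_n$ (i.e.\ $SU$) case is correct and matches the paper's: either at least one of $S_{X'},S_{Y'}$ strictly drops, giving $S_{X'}+S_{Y'}\le S_X+S_Y-1\le n$, or both are flat, in which case $S_X,S_Y\le (n+1)/2$ and the only failure mode is the excluded $SU(m)\times SU(m)$ pair in $SU(2m)$.

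However, your treatment of types $B_n,C_n,D_n$ contains a genuine error that makes the argument collapse. You open with the claim that ``in types $B_n,C_n,D_n$ one always has $S_{X'}=S_X-1$'' and cite the computation $2(J-1)=2J-2$ in the dominant $B/C/D$ case as evidence --- but $2J-2=S_X-2$, not $S_X-1$. When both $X$ and $X'$ are dominant $B/C/D$ type, the drop is exactly \emph{two}. Because of this confusion, your three-way case split (``both drop by $1$'', ``exactly one flat'', ``both flat'') is organized around the wrong dichotomy and cannot close the required inequality. In particular, in your Case~2 you conclude $S_{X'}+S_{Y'}=S_X+S_Y-1\le 2n-1$, which is one unit short of the target $2(n-1)=2n-2$; the handwave to ``the a-priori bound on $S_Y$'' does not repair this (for instance, a flat $Y$ gives $S_Y\le 2n/3$, but that alone does not force $S_X+S_Y\le 2n-1$ from eligibility). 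You also do not address the mixed cases where $X$ changes dominant type under reduction --- where $X$ is dominant $SU$ type but $X'$ becomes dominant $B/C/D$ type (drop by $1$), or $X$ is dominant $B/C/D$ type but $X'$ becomes dominant $SU$ type (drop by $0$ or $1$) --- each of which requires its own bound.

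The fix, and the dichotomy the paper actually uses, is this: for $B_n,C_n,D_n$, either (a) both $X$ and $X'$ are dominant $B$ ($C$ or $D$) type, in which case $S_{X'}=S_X-2$, or (b) at least one of them is dominant $SU$ type, in which case $S_{X'}\le n-1$ (this uses that $J+\sum s_j=n$: if $X'$ is dominant $SU$ then $S_{X'}=\max s'_j\le n-1$ automatically; if $X$ is dominant $SU$ and $X'$ is dominant $B/C/D$ then $2J=s_1-1$ with $J+s_1\le n$ forces $s_1\le (2n+1)/3$ and hence $S_{X'}\le n-1$). With this dichotomy the lemma is immediate: if either pair $(X,X')$ or $(Y,Y')$ falls in case (a), then $S_{X'}+S_{Y'}\le S_X+S_Y-2\le 2(n-1)$; otherwise both $S_{X'}$ and $S_{Y'}$ are at most $n-1$, which again gives $S_{X'}+S_{Y'}\le 2(n-1)$. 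This is the correct replacement for your Cases~1--3 in the classical types, and it is where your proposal has a genuine gap.
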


\begin{proof}
Case 1: $\mathfrak{g}_{n}$ is type $B_{n}$, $C_{n}$ or $D_{n}$.

Observe that always $S_{X^{\prime }}\leq S_{X}$ since the dimensions of the
eigenspaces of $X^{\prime }$ can only be at most the dimensions of those of $%
X.$

If both $X$ and $X^{\prime }$ are dominant $B,$ $C$ or $D$ type, then $%
S_{X^{\prime }}=S_{X}-2$. If $X^{\prime }$ is dominant $SU$ type, then $%
S_{X^{\prime }}\leq n-1$, regardless of the type of $X$. Finally, if $X$ is
dominant $SU$ type while $X^{\prime }$ is dominant $B$, $C$ or $D$ type,
then $S_{X}=s_{1}>2J=S_{X^{\prime }}\geq s_{1}-1$. Since it is always true
that $J+s_{1}\leq n$, one can check that $s_{1}\leq (2n+1)/3$ and hence $%
S_{X^{\prime }}\leq n-1.$

Thus if either $X$ and $X^{\prime }$ or $Y$ and $Y^{\prime }$ are both
dominant $B,$ $C$ or $D$ type, then 
\begin{equation*}
S_{X^{\prime }}+S_{Y^{\prime }}\leq S_{X}+S_{Y}-2\leq 2(n-1).
\end{equation*}
Otherwise, both $S_{X^{\prime }}$ and $S_{Y^{\prime }}\leq n-1$ and again we
conclude that $S_{X^{\prime }}+S_{Y^{\prime }}\leq 2(n-1)$.

Case 2: $\mathfrak{g}_{n}$ is type $SU(n+1)$.

If either $S_{X^{\prime }}<S_{X}$ or $S_{Y^{\prime }}<S_{Y}$, then $%
S_{X^{\prime }}+S_{Y^{\prime }}\leq S_{X}+S_{Y}-1$ and thus $(X^{\prime
},Y^{\prime })$ is eligible. Otherwise, $S_{X^{\prime }}=S_{X}$ and $%
S_{Y^{\prime }}=S_{Y}$ and in that case $S_{X^{\prime }},S_{Y^{\prime }}\leq
(n+1)/2$. If $n$ is even, then we must have $S_{X^{\prime }},S_{Y^{\prime
}}\leq n/2$ giving $S_{X^{\prime }}+S_{Y^{\prime }}\leq n$. If $n$ is odd,
it is still true that $S_{X^{\prime }}+S_{Y^{\prime }}\leq n$ unless $%
S_{X^{\prime }}=S_{Y^{\prime }}=(n+1)/2$. But that happens only when $X$ and 
$Y$ are both type $SU((n+1)/2)\times SU((n+1)/2)$, which is not permitted.
\end{proof}

\begin{remark}
\label{switchtype}It is easy to see that if $X$ and $X^{\prime }$ are of
opposite dominant types, then $X$ is type $B_{J},$ $(C_{J}$ or $D_{J})\times
SU(s_{1})\times \cdots \times SU(s_{m})$ where $\,1\leq J<\sum s_{i}$. It
follows from \cite[Thm.~8.2]{GHMathZ} that $\mu _{X}^{2}\in L^{2}$.
\end{remark}

We record here a well known fact from elementary linear algebra that is a
consequence of the continuity of the determinant function and will be quite
useful for us.

\begin{lemma}
\label{elem}If $\{v_{1},\dots,v_{n}\}$ is a set of linearly independent
vectors in vector space $V$ and $w_{1},\dots,w_{n}\in V$, then for
sufficiently small $\varepsilon >0$, the collection $\{v_{1}+\varepsilon
w_{1},\dots,v_{n}+\varepsilon w_{n}\}$ is also linearly independent.
\end{lemma}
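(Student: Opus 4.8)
The plan is to reduce linear independence to the non-vanishing of a determinant and then invoke the continuity (indeed, polynomiality) of the determinant in $\varepsilon$. Since only the finite-dimensional subspace spanned by $\{v_1,\dots,v_n,w_1,\dots,w_n\}$ is relevant, there is no loss in assuming $V$ is finite-dimensional. Because $\{v_1,\dots,v_n\}$ is linearly independent, I would first choose linear functionals $f_1,\dots,f_n$ on $V$ such that the $n\times n$ matrix $A=(f_i(v_j))_{i,j}$ is invertible; for instance, extend $\{v_1,\dots,v_n\}$ to a basis of $V$ and take $f_1,\dots,f_n$ to be the first $n$ coordinate functionals with respect to that basis, so that $A$ is the identity.

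Next I would form the matrix $M(\varepsilon)=\bigl(f_i(v_j+\varepsilon w_j)\bigr)_{i,j}=A+\varepsilon B$, where $B=(f_i(w_j))_{i,j}$. The map $\varepsilon\mapsto\det M(\varepsilon)$ is a polynomial in $\varepsilon$ with value $\det A\neq 0$ at $\varepsilon=0$, hence it is non-zero for all $\varepsilon$ in some interval $(0,\varepsilon_0)$. Finally, for such $\varepsilon$ I would check linear independence directly: if $\sum_{j=1}^n c_j(v_j+\varepsilon w_j)=0$, then applying each $f_i$ gives $\sum_{j=1}^n M(\varepsilon)_{ij}\,c_j=0$ for all $i$, so $M(\varepsilon)c=0$, and invertibility of $M(\varepsilon)$ forces $c=0$. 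Thus $\{v_1+\varepsilon w_1,\dots,v_n+\varepsilon w_n\}$ is linearly independent for every $\varepsilon\in(0,\varepsilon_0)$.

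There is essentially no hard step here; the only thing to be mildly careful about is the passage to a finite-dimensional setting (to make sense of the determinant and of ``continuity'') and the choice of the functionals $f_i$, which is exactly what the hypothesis of linear independence of the $v_j$ provides. One may equivalently phrase the argument by noting that the set of linearly independent $n$-tuples is open in $V^n$ and that $(v_1+\varepsilon w_1,\dots,v_n+\varepsilon w_n)\to(v_1,\dots,v_n)$ as $\varepsilon\to 0$, but the determinant formulation is the most self-contained.
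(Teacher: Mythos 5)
Your proof is correct and follows exactly the route the paper has in mind: the paper states this lemma as ``a well known fact from elementary linear algebra that is a consequence of the continuity of the determinant function'' and gives no further argument, and your determinant-based proof is precisely the standard justification being invoked. The only additions you make are the routine reduction to a finite-dimensional subspace and the explicit choice of coordinate functionals, both of which are sound.
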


\begin{notation}
Given $X^{\prime }$ as defined above, let $\mathcal{N}_{X^{\prime
}}=\{RE_{\alpha },IE_{\alpha }:\alpha \notin \Phi _{X^{\prime }}\}$, (as in %
\ref{NX}), but viewed as embedded into $\mathcal{V}_{n}$. Let 
\begin{equation*}
\Omega _{X}=\mathcal{N}_{X}\diagdown \mathcal{N}_{X^{\prime }}.
\end{equation*}
\end{notation}

We will refer to the next result as our general strategy. It will enable us
to establish Prop.~\ref{key}(iii) holds for a given tuple.

\begin{proposition}
\label{general strategy}\textrm{(}General Strategy\/\textrm{)} Let $X_{i}\in 
\mathfrak{t}_{n}$, $i=1,\dots,L$ for $L\geq 2$, and assume $(X_{1}^{\prime
},\dots,X_{L}^{\prime })$ is an absolutely continuous tuple in $\mathfrak{g}%
_{n-1}$. Suppose $\Omega $ is a subset of $\mathcal{V}_{n}\backslash 
\mathcal{V}_{n-1}$ that contains all $\Omega _{X_{i}}$ and has the property
that $ad(H)(\Omega )\subseteq sp\Omega $ whenever $H\in \mathfrak{g}_{n-1}$.
Fix $\Omega _{0}\subseteq \Omega _{X_{L}}$.

Assume there exists $g_{1},\dots g_{L-1}\in G_{n-1}$ and $M\in \mathfrak{g}
_{n}$ such that

\textrm{(i)} $sp\{Ad(g_{i})(\Omega _{X_{i}}),\Omega _{X_{L}}\backslash
\Omega _{0}:i=1,\dots,L-1\}=sp\Omega ;$

\textrm{(ii)} $ad^{k}(M):\mathcal{N}_{X_{L}}\backslash \Omega
_{0}\rightarrow sp\{\Omega ,\mathfrak{g}_{n-1}\}$ for all positive integers $%
k$; and

\textrm{(iii)} The span of the projection of $Ad(\exp sM)(\Omega _{0})$ onto
the orthogonal complement of $sp\{\mathfrak{g}_{n-1},\Omega \}$ in $%
\mathfrak{g} _{n}$ is a surjection for all small $s>0.$

Then $(X_{1},\dots,X_{L})$ is an absolute continuous tuple.
\end{proposition}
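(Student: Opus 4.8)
The plan is to verify hypothesis (iii) of Proposition~\ref{key}, namely to produce group elements $h_1,\dots,h_L\in G_n$ with $h_1=Id$ so that $sp\{Ad(h_i)(\mathcal{N}_{X_i}):i=1,\dots,L\}=\mathfrak{g}_n$. The natural decomposition to exploit is $\mathfrak{g}_n=\mathfrak{g}_{n-1}\oplus sp(\mathcal{V}_n\setminus\mathcal{V}_{n-1})$ (as real vector spaces, using the embeddings set up just before the statement), so it suffices to hit all of $\mathfrak{g}_{n-1}$ and, modulo $\mathfrak{g}_{n-1}$, all of the ``new'' directions $\mathcal{V}_n\setminus\mathcal{V}_{n-1}$. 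First I would note that each $\mathcal{N}_{X_i}=\mathcal{N}_{X_i'}\sqcup\Omega_{X_i}$, and since $(X_1',\dots,X_L')$ is absolutely continuous in $\mathfrak{g}_{n-1}$, Proposition~\ref{key}(iii) applied there gives $g_1,\dots,g_L\in G_{n-1}$ with $g_1=Id$ and $sp\{Ad(g_i)(\mathcal{N}_{X_i'}):i\}=\mathfrak{g}_{n-1}$; the final sentence of Proposition~\ref{key} lets us take these $g_i$ in an open dense full-measure subset of $G_{n-1}^{L-1}$, and in particular we may assume they agree with the $g_i$ furnished by hypothesis (i) of the General Strategy (or perturb to a common choice). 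So with the $g_i\in G_{n-1}$ in hand we already capture $\mathfrak{g}_{n-1}$; what remains is to capture the quotient $\mathfrak{g}_n/\mathfrak{g}_{n-1}$, and since $Ad(g_i)$ for $g_i\in G_{n-1}$ preserves $sp\Omega\subseteq\mathcal{V}_n\setminus\mathcal{V}_{n-1}$ by the $ad(H)$-invariance hypothesis on $\Omega$, hypothesis (i) tells us precisely that $sp\{Ad(g_i)(\Omega_{X_i}),\Omega_{X_L}\setminus\Omega_0:i<L\}=sp\Omega$ modulo $\mathfrak{g}_{n-1}$.

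The only piece not yet covered is the part of $\mathfrak{g}_n$ lying outside $sp\{\mathfrak{g}_{n-1},\Omega\}$, and the elements available to reach it are $Ad(h_L)(\mathcal{N}_{X_L})$ for the last tuple member with a group element $h_L$ we are still free to choose. The idea is to take $h_L=\exp(sM)$ for the $M$ from hypotheses (ii)--(iii) and small $s>0$. Write $\mathcal{N}_{X_L}=(\mathcal{N}_{X_L}\setminus\Omega_0)\sqcup\Omega_0$. For vectors $v\in\mathcal{N}_{X_L}\setminus\Omega_0$, expand $Ad(\exp sM)(v)=\sum_{k\ge0}\frac{s^k}{k!}ad^k(M)(v)$; by hypothesis (ii) every term lies in $sp\{\Omega,\mathfrak{g}_{n-1}\}$, so $Ad(\exp sM)(v)\in sp\{\Omega,\mathfrak{g}_{n-1}\}$ and these vectors contribute nothing new but also do no harm — the plan is to simply use them in place of $v$ itself, noting that together with the $Ad(g_i)(\Omega_{X_i})$ and $\Omega_{X_L}\setminus\Omega_0$ (which are of this form with $s$ irrelevant, or handled directly) they still span $sp\{\mathfrak{g}_{n-1},\Omega\}$ after a small perturbation. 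For $v\in\Omega_0$, hypothesis (iii) says the projections of $Ad(\exp sM)(\Omega_0)$ onto the orthogonal complement $W$ of $sp\{\mathfrak{g}_{n-1},\Omega\}$ span $W$ for all small $s>0$. Combining: the full collection $\{Ad(g_i)(\mathcal{N}_{X_i}),Ad(\exp sM)(\mathcal{N}_{X_L}):i=1,\dots,L-1\}$ projects onto all of $W$ (via the $\Omega_0$-part) while its complementary components still span $sp\{\mathfrak{g}_{n-1},\Omega\}=sp\{\mathfrak{g}_{n-1},\mathcal{V}_n\cap sp\Omega\}$; hence the whole collection spans $\mathfrak{g}_n$.

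The step requiring care is the gluing of the two spanning statements — that the ``old'' directions continue to span $sp\{\mathfrak{g}_{n-1},\Omega\}$ after we replace some generators by their $Ad(\exp sM)$-images, and simultaneously the $\Omega_0$-images fill in $W$ — since a priori the $Ad(\exp sM)$-perturbation could destroy linear independence among the generators spanning $sp\{\mathfrak{g}_{n-1},\Omega\}$. This is exactly where Lemma~\ref{elem} enters: take a basis of $\mathfrak{g}_n$ extracted from the unperturbed generators $\{Ad(g_i)(\mathcal{N}_{X_i'}),Ad(g_i)(\Omega_{X_i}),\Omega_{X_L}\setminus\Omega_0\}$ (spanning $sp\{\mathfrak{g}_{n-1},\Omega\}$) together with a basis of $W$ chosen from projections of $Ad(\exp sM)(\Omega_0)$ for one fixed small $s$; lift the latter to the actual vectors $Ad(\exp sM)(w)$, $w\in\Omega_0$, which differ from their $W$-projections by elements of $sp\{\mathfrak{g}_{n-1},\Omega\}$. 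Writing $Ad(\exp sM)(w)=(\text{its }W\text{-projection})+(\text{error in }sp\{\mathfrak{g}_{n-1},\Omega\})$ and $Ad(\exp sM)(v)=v+O(s)$ for $v\in\mathcal{N}_{X_L}\setminus\Omega_0$, the determinant of the transition matrix is a nonzero polynomial-type function of $s$ (nonzero at a witnessing value by hypothesis (iii) at that $s$, or by continuity for small $s$), so for suitable small $s>0$ the full collection is a basis. I expect the bookkeeping of which generators land in $\mathfrak{g}_{n-1}$, which in $sp\Omega$, and which have nonzero $W$-component — together with confirming that $\mathcal{N}_{X_L}\setminus\Omega_0\subseteq sp\{\mathfrak{g}_{n-1},\Omega\}$ follows from hypothesis (ii) at $k=0$ (i.e. $ad^0(M)=Id$) plus $\mathcal{N}_{X_L}\setminus\Omega_0\subseteq\mathcal{N}_{X_L'}\cup(\Omega_{X_L}\setminus\Omega_0)\subseteq\mathfrak{g}_{n-1}\cup sp\Omega$ — to be the main obstacle, rather than any deep idea; once the two spanning sets are correctly aligned against the decomposition $\mathfrak{g}_n=sp\{\mathfrak{g}_{n-1},\Omega\}\oplus W$, the conclusion follows from Proposition~\ref{key}(iii).
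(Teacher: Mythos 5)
Your proposal is correct and follows essentially the same route as the paper's own proof: reduce to Proposition~\ref{key}(iii), use the inductive hypothesis on $(X_1',\dots,X_L')$ together with the density conclusion of Proposition~\ref{key} to perturb the given $g_i\in G_{n-1}$ to nearby $h_i$ that span $\mathfrak{g}_{n-1}$ while (via Lemma~\ref{elem} and the $ad(H)$-invariance of $sp\Omega$) still satisfying hypothesis~(i), then move $X_L$ by $\exp(sM)$, using hypothesis~(ii) to keep $\mathcal{N}_{X_L}\setminus\Omega_0$ inside $sp\{\mathfrak{g}_{n-1},\Omega\}$ and hypothesis~(iii) to cover the orthogonal complement. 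The only place you are slightly looser than the paper is in the phrase ``we may assume they agree'' — you cannot literally take the $g_i$ from hypothesis~(i) to lie in the dense set coming from Proposition~\ref{key}, you must genuinely replace them by nearby $h_i$ and then re-verify hypothesis~(i) via Lemma~\ref{elem}; your parenthetical ``or perturb to a common choice'' shows you see this, so the argument is sound.
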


\begin{proof}
As ($X_{1}^{\prime },\dots,X_{L}^{\prime })$ is an absolutely continuous
tuple, Prop.~\ref{key}(iii) tells us that 
\begin{equation*}
sp\{Ad(h_{i})(\mathcal{N}_{X_{i}^{\prime }}),\mathcal{N}_{X_{L}^{\prime
}}:i=1,\dots,L-1\}=\mathfrak{g}_{n-1}
\end{equation*}
for a dense set of $(h_{1},\dots,h_{L-1})\in G_{n-1}^{L-1}$. Given $%
\varepsilon >0$, choose such $h_{i}=h_{i}(\varepsilon )\in G_{n-1}$ with $%
\left\Vert Ad(h_{i})-Ad(g_{i})\right\Vert <\varepsilon ,$ where the elements 
$g_{i}\in G_{n-1}$ are the ones given in the hypothesis of the proposition.
(The norm can be taken to be the operator norm.)

Lemma \ref{elem}, together with assumption (i), shows that for sufficiently
small $\varepsilon >0$, 
\begin{align*}
\dim (sp\Omega ) &=\dim \left( sp\{Ad(g_{i})(\Omega _{X_{i}}),\Omega
_{X_{L}}\backslash \Omega _{0}:i=1,\dots,L-1\}\right) \\
&=\dim \left( sp\{Ad(h_{i})(\Omega _{X_{i}}),\Omega _{X_{L}}\backslash
\Omega _{0}:i=1,\dots,L-1\}\right) .
\end{align*}

Since $ad(H)(\Omega )\subseteq sp(\Omega )$ for all $H\in \mathfrak{g}_{n-1}$
and $h_{i}=\exp H_{i}$ for some $H_{i}\in \mathfrak{g}_{n-1}$ we have 
\begin{align*}
Ad(h_{i})(\Omega ) &=Ad(\exp H_{i})(\Omega ) \\
&=\exp (ad(H_{i})(\Omega )\subseteq sp\Omega
\end{align*}
for all $h_{i}\in G_{n-1}$. Thus for sufficiently small $\varepsilon >0$, 
\begin{equation*}
sp\{Ad(h_{i})(\Omega _{X_{i}}),\Omega _{X_{L}}\backslash \Omega
_{0}:L=1,\dots,L-1\}=sp\Omega
\end{equation*}
For such a choice of $\varepsilon $ (hereafter fixed) we have 
\begin{gather*}
sp\{Ad(h_{i})(\mathcal{N}_{X_{i}}),\mathcal{N}_{X_{L}}\backslash \Omega
_{0}:i=1,\dots,L-1\} \\
=sp\{Ad(h_{i})(\mathcal{N}_{X_{i}^{\prime }}),Ad(h_{i})\Omega _{X_{i}}, 
\mathcal{N}_{X_{L}^{\prime }},\Omega _{X_{L}}\backslash \Omega
_{0}\}=sp\{\Omega ,\mathfrak{g}_{n-1}\}.
\end{gather*}

Assumption (ii), and the fact that $\mathcal{N}_{X_{L}}\backslash \Omega
_{0}\subseteq sp\{\Omega ,\mathfrak{g}_{n-1}\},$ implies that for any real
number $s$, $\exp (s\cdot adM)=Ad(\exp sM)$ maps $\mathcal{N}
_{X_{L}}\backslash \Omega _{0}$ to $sp\{\Omega ,\mathfrak{g}_{n-1}\}$.
Moreover, $\left\Vert Id-Ad(\exp sM)\right\Vert \rightarrow 0$ as $%
s\rightarrow 0$, thus similar reasoning to that above shows that for all
small enough $s>0$, 
\begin{align*}
sp\{\Omega ,\mathfrak{g}_{n-1}\} &=sp\{Ad(h_{i})(\mathcal{N}_{X_{i}}), 
\mathcal{N}_{X_{L}}\backslash \Omega _{0}:i=1,\dots,L\} \\
&=sp\{Ad(h_{i})(\mathcal{N}_{X_{i}}),(Ad(\exp sM))(\mathcal{N}
_{X_{L}}\backslash \Omega _{0}):i=1,\dots,L-1\}.
\end{align*}

Combined with assumption (iii), this proves that for sufficiently small $%
s>0, $ 
\begin{equation*}
sp\{Ad(h_{i})(\mathcal{N}_{X_{i}}),Ad(\exp sM)(\mathcal{N}
_{X_{L}}):i=1,\dots,L-1\}=\mathfrak{g}_{n}.
\end{equation*}
Another application of Prop.~\ref{key}(iii) shows that $\mu _{X_{1}}\ast
\cdots \ast \mu _{X_{L}}$ is absolutely continuous.
\end{proof}

We will occasionally make use of the following specific application of the
elementary linear algebra property in order to verify the hypothesis of the
general strategy.

\begin{lemma}
\label{genstrategyb}Suppose $\Omega $ is a subset of $\mathcal{V}
_{n}\backslash \mathcal{V}_{n-1}$ that contains both $\Omega _{X}$ and $%
\Omega _{Y},$ and has the property that $ad(H)(\Omega )\subseteq sp\Omega $
whenever $H\in \mathfrak{g}_{n-1}$. Fix $\Omega _{0}\subseteq \Omega _{X}$.
Assume $\Omega _{1}\subseteq (\Omega _{Y}\cap \Omega _{X})\diagdown \Omega
_{0}$ and the vectors in $\{adH(\Omega _{1}),$ $\Omega _{Y}\diagdown \Omega
_{1},$ $\Omega _{X}\diagdown \Omega _{0}\}$ span $\Omega $ for some $H\in 
\mathfrak{g}_{n-1}$. Then for sufficiently small $t>0$, 
\begin{equation*}
sp\{Ad(\exp tH)(\Omega _{Y}),\Omega _{X}\backslash \Omega _{0}\}=sp\Omega .
\end{equation*}
\end{lemma}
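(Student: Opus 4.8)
The plan is to deduce Lemma \ref{genstrategyb} directly from the elementary linear-algebra fact recorded in Lemma \ref{elem}, using the hypothesis that $ad(H)$ preserves $sp\Omega$. First I would fix an enumeration of the vectors in $\Omega_Y$. Since $\Omega_1\subseteq\Omega_Y$, I split $\Omega_Y=(\Omega_Y\setminus\Omega_1)\sqcup\Omega_1$, and I consider the image of $\Omega_Y$ under $Ad(\exp tH)=\exp(t\,ad(H))$. The key observation is that for $v\in\Omega_1$,
\begin{equation*}
Ad(\exp tH)(v)=v+t\,ad(H)(v)+O(t^{2}),
\end{equation*}
and since $v\in\Omega_X\cap\Omega_Y$ lies in $sp\Omega$ while $ad(H)(v)\in sp\Omega$ too, the whole curve $t\mapsto Ad(\exp tH)(v)$ stays in $sp\Omega$. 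Likewise $Ad(\exp tH)$ fixes $sp\Omega$ as a set, so $Ad(\exp tH)(\Omega_Y)\subseteq sp\Omega$, and of course $\Omega_X\setminus\Omega_0\subseteq sp\Omega$ as well. Thus the span in question is automatically contained in $sp\Omega$, and what must be shown is the reverse inclusion for small $t>0$, i.e. that the listed vectors still span all of $sp\Omega$.

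Next I would set up the perturbation argument. By hypothesis the collection
\begin{equation*}
\mathcal{B}=\{ad(H)(v):v\in\Omega_1\}\cup(\Omega_Y\setminus\Omega_1)\cup(\Omega_X\setminus\Omega_0)
\end{equation*}
spans $sp\Omega$; extract from $\mathcal{B}$ a basis of $sp\Omega$, keeping (this is the point to be slightly careful about) \emph{all} of $\Omega_Y\setminus\Omega_1$ and $\Omega_X\setminus\Omega_0$ and only thinning out the $ad(H)(v)$ part — this is possible because the union already spans and we may discard only redundant vectors, and any vector of $\Omega_Y\setminus\Omega_1$ or $\Omega_X\setminus\Omega_0$ that were redundant can simply be added back afterward without affecting the span. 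So let $\Omega_1'\subseteq\Omega_1$ be such that $\{ad(H)(v):v\in\Omega_1'\}\cup(\Omega_Y\setminus\Omega_1)\cup(\Omega_X\setminus\Omega_0)$ is a basis of $sp\Omega$. Now for $v\in\Omega_1'$ write $Ad(\exp tH)(v)=v+t\bigl(ad(H)(v)+O(t)\bigr)$; dividing the $v$-th vector by $t$ (a nonzero scalar, which does not change spans) gives $\tfrac1t v+ad(H)(v)+O(t)$. As $t\to0^{+}$ this tends to $ad(H)(v)$ only if we instead keep $v/t$; so more cleanly I would apply Lemma \ref{elem} with the linearly independent base family
\begin{equation*}
\{ad(H)(v):v\in\Omega_1'\}\cup(\Omega_Y\setminus\Omega_1)\cup(\Omega_X\setminus\Omega_0)
\end{equation*}
and the perturbation family obtained by replacing, for each $v\in\Omega_1'$, the vector $ad(H)(v)$ by $\tfrac1t Ad(\exp tH)(v)=ad(H)(v)+\tfrac1t v+O(t)\cdot(\text{bounded})$ — hmm, the $\tfrac1t v$ term blows up, so Lemma \ref{elem} does not apply in that naive form.

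The clean fix, and the step I expect to be the only real subtlety, is to perturb in the other direction: keep $v\in\Omega_1'$ itself (not $ad(H)(v)$) in the base family. That is, apply Lemma \ref{elem} to the linearly independent set $\{v:v\in\Omega_1'\}\cup(\Omega_Y\setminus\Omega_1)\cup(\Omega_X\setminus\Omega_0)$ — wait, this need not be linearly independent, since the spanning hypothesis used $ad(H)(v)$ not $v$. So the honest route is: the map $v\mapsto Ad(\exp tH)(v)-v=t\,ad(H)(v)+O(t^{2})$, so $\tfrac1t(Ad(\exp tH)(v)-v)=ad(H)(v)+O(t)$, and the vectors $\{Ad(\exp tH)(v):v\in\Omega_1'\}\cup(\Omega_Y\setminus\Omega_1)\cup(\Omega_X\setminus\Omega_0)$ span the same space as $\{Ad(\exp tH)(v)-\Pi(v):v\in\Omega_1'\}\cup(\Omega_Y\setminus\Omega_1)\cup(\Omega_X\setminus\Omega_0)$ where $\Pi(v)$ is the component of $v$ in the span of $(\Omega_Y\setminus\Omega_1)\cup(\Omega_X\setminus\Omega_0)\cup\{Ad(\exp tH)(w):w\in\Omega_1',w\neq v\}$ — this is getting circular. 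In the write-up I would instead argue directly: form the square matrix $M(t)$ whose columns are the coordinates (in a fixed basis of $sp\Omega$) of the vectors $Ad(\exp tH)(v)$ for $v\in\Omega_1'$, together with those of $\Omega_Y\setminus\Omega_1$ and $\Omega_X\setminus\Omega_0$; its determinant $\det M(t)$ is real-analytic in $t$; one checks $\det M(t)=c\,t^{|\Omega_1'|}+O(t^{|\Omega_1'|+1})$ with $c\neq0$ precisely because the leading coefficient is (up to sign) the determinant of the matrix with columns $ad(H)(v)$, $v\in\Omega_1'$, and $\Omega_Y\setminus\Omega_1$, $\Omega_X\setminus\Omega_0$, which is nonzero by the spanning hypothesis after discarding the redundant $ad(H)$-columns. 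Hence $\det M(t)\neq0$ for all sufficiently small $t>0$, so the full collection $\{Ad(\exp tH)(\Omega_Y),\Omega_X\setminus\Omega_0\}$ (which contains these basis vectors and lies inside $sp\Omega$) spans $sp\Omega$, as required. The main obstacle is simply bookkeeping: isolating exactly which vectors to perturb and confirming the leading term of the determinant expansion is the asserted nonzero one; everything else is the continuity/analyticity of the determinant already invoked repeatedly in the paper.
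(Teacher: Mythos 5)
Your final determinant argument is essentially the paper's proof in different clothing: the paper applies Lemma~\ref{elem} with base family $\{ad(H)(\Omega_1),\,\Omega_Y\setminus\Omega_1,\,\Omega_X\setminus\Omega_0\}$ and perturbed family $\{\tfrac1t(Ad(\exp tH)-Id)(\Omega_1),\,Ad(\exp tH)(\Omega_Y\setminus\Omega_1),\,\Omega_X\setminus\Omega_0\}$, then rescales by $t$ and finally uses $\Omega_1\subseteq\Omega_X\setminus\Omega_0$ to replace $(Ad(\exp tH)-Id)(\Omega_1)$ by $Ad(\exp tH)(\Omega_1)$. Your determinant expansion is the same calculation: extracting the factor $t^{|\Omega_1'|}$ by column operations is exactly the paper's $\tfrac1t(Ad(\exp tH)-Id)$ rescaling, and the nonvanishing leading coefficient is exactly the paper's invocation of Lemma~\ref{elem}.

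Two points where the write-up needs tightening. First, your matrix $M(t)$ as defined has columns $w$ for $w\in\Omega_Y\setminus\Omega_1$, but the collection you are trying to prove spans $sp\Omega$ is $\{Ad(\exp tH)(\Omega_Y),\Omega_X\setminus\Omega_0\}$, which contains $Ad(\exp tH)(w)$, not $w$ itself; as written $M(t)$ is not built from a subcollection of the available vectors, so nonvanishing of $\det M(t)$ does not immediately give the desired conclusion. This is easily repaired — replace $w$ by $Ad(\exp tH)(w)=w+O(t)$ in the columns; the leading $t^{|\Omega_1'|}$ coefficient is unchanged — but it must actually be done (the paper's proof does this explicitly). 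Second, the crucial reason the column operations produce a factor of $t$ from each $Ad(\exp tH)(v)$ column with $v\in\Omega_1'$ is that $v$ itself already appears among the columns, because $\Omega_1'\subseteq\Omega_1\subseteq\Omega_X\setminus\Omega_0$; this is the whole point of the hypothesis $\Omega_1\subseteq\Omega_X\setminus\Omega_0$, and the paper flags it explicitly (\emph{``But since $\Omega_1\subseteq\Omega_X\setminus\Omega_0$, we can replace\ldots''}), whereas your account only implicitly relies on it. With those two clarifications the argument is sound and matches the paper's.
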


\begin{proof}
The arguments are similar to that of the general strategy. Since $\left\Vert
ad(H)-\frac{1}{t}(Ad(\exp tH)-Id)\right\Vert $ and $\left\Vert Id-Ad(\exp
tH)\right\Vert $ both tend to $0$ as $t\rightarrow 0$, and $ad^{k}(H)(\Omega
)\subseteq sp\Omega $ for all $k$, the same argument as used above shows
that 
\begin{equation*}
sp\{(Ad(\exp tH)-Id)\left( \Omega _{1}\right) ,Ad(\exp tH)(\Omega
_{Y}\backslash \Omega _{1}),\Omega _{X}\backslash \Omega _{0}\}=sp\Omega .
\end{equation*}
But since $\Omega _{1}\subseteq \Omega _{X}\backslash \Omega _{0}$, we can
replace $(Ad(\exp tH)-Id)\left( \Omega _{1}\right) $ in the span on the left
hand side by $Ad(\exp tH)\left( \Omega _{1}\right) $. Hence 
\begin{equation*}
sp\{Ad(\exp tH)\left( \Omega _{Y}\right) ,\Omega _{X}\backslash \Omega
_{0}\}=sp\Omega \text{.}
\end{equation*}
\end{proof}

\subsection{Applying the General strategy with $L=2$}

The following proposition, the `induction step', is the most important
ingredient in the proof of the main theorem.

We continue to use the notation $\Omega _{X}=\mathcal{N}_{X}\diagdown 
\mathcal{N}_{X^{\prime }}$, where $X^{\prime }$ was defined in (\ref{X'}).

\begin{proposition}
\label{indstep}Suppose $(X,Y)$ is an eligible pair in $\mathfrak{g}_{n}$
other than $X,Y$ both of type $SU(n)$ in $D_{n}$ or type $SU(n/2)\times
SU(n/2)$ in $SU(n)$. Assume also that the reduced pair, $(X^{\prime
},Y^{\prime }),$ is an absolutely continuous pair in $\mathfrak{g}_{n-1}$.
Then $(X,Y)$ is an absolutely continuous pair in $\mathfrak{g}_{n}.$
\end{proposition}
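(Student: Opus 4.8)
The plan is to verify the three hypotheses of the General Strategy (Prop.~\ref{general strategy}) with $L=2$, taking $X_1=X$, $X_2=Y$, and choosing $\Omega$ to be (the span-closure under $ad(\mathfrak{g}_{n-1})$ of) the set of root vectors attached to roots involving the index that was ``removed'' in passing from $\mathfrak{t}_n$ to $\mathfrak{t}_{n-1}$; concretely $\Omega=\mathcal{V}_n\setminus\mathcal{V}_{n-1}$, which visibly contains $\Omega_X$ and $\Omega_Y$ and satisfies $ad(H)(\Omega)\subseteq sp\,\Omega$ for $H\in\mathfrak{g}_{n-1}$ by the bracket formulas \eqref{bracketaction}, since bracketing a root vector indexed at position $1$ with a root vector indexed inside $\{2,\dots,n\}$ either kills it or produces another root vector still indexed at position $1$. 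First I would record, for each Lie type and each dominant type of $X$ and $Y$, exactly which roots lie in $\Omega_X$, $\Omega_Y$, $\Omega_X\cap\Omega_Y$ — these are the short lists $e_1-e_j$, $e_1+e_j$, $\pm e_1$ (or $\pm 2e_1$) restricted by the annihilator structure — and compute $|\Omega_X|$, $|\Omega_Y|$, $\dim sp\,\Omega$. Here $\dim sp\,\Omega=\dim\mathfrak{g}_n-\dim\mathfrak{g}_{n-1}$ and $|\Omega_X|=\dim O_X-\dim O_{X'}$, so eligibility of $(X,Y)$ together with Lemma~\ref{eligible} (giving eligibility of $(X',Y')$, hence by hypothesis absolute continuity and a dimension count $|\mathcal{N}_{X'}|+|\mathcal{N}_{Y'}|\ge\dim\mathfrak{g}_{n-1}$) translates into the inequality $|\Omega_X|+|\Omega_Y|\ge\dim sp\,\Omega$ that one needs for any spanning statement inside $\Omega$ to be possible.

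The main work, and the expected obstacle, is hypothesis (i): finding $g_1\in G_{n-1}$ and a subset $\Omega_0\subseteq\Omega_Y$ such that $sp\{Ad(g_1)(\Omega_X),\ \Omega_Y\setminus\Omega_0\}=sp\,\Omega$. The natural choice is to let $\Omega_0$ be a subset of $\Omega_Y$ of size exactly $|\Omega_X|+|\Omega_Y|-\dim sp\,\Omega$ (the ``overlap deficit''), and then one must produce an element $g_1$ — typically $g_1=\exp(tH)$ for a single well-chosen root-vector-type $H\in\mathfrak{g}_{n-1}$, or a short product of such — whose adjoint action rotates $Ad(g_1)(\Omega_X)$ so that it, together with the untouched part $\Omega_Y\setminus\Omega_0$, spans. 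Lemma~\ref{genstrategyb} is tailor-made for exactly this: it reduces the existence of such a $g_1=\exp(tH)$ to the purely combinatorial statement that $\{ad H(\Omega_1),\ \Omega_Y\setminus\Omega_1,\ \Omega_X\setminus\Omega_0\}$ spans $\Omega$ for some single $H\in\mathfrak{g}_{n-1}$ and some $\Omega_1\subseteq(\Omega_X\cap\Omega_Y)\setminus\Omega_0$. So I would, case by case on the (finitely many) combinations of dominant types and Lie types, exhibit an explicit $H$ (a real or imaginary part of a root vector $E_{e_j-e_k}$ or $E_{e_j+e_k}$ with $j,k\ge 2$ chosen to connect an annihilating index of $X$ to a non-annihilating index of $Y$, or vice versa) and check the span condition using \eqref{bracketaction}. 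The exclusions in the hypothesis — $X,Y$ both $SU(n)$ in $D_n$, both $SU(n/2)\times SU(n/2)$ in $SU(n)$ — are precisely the cases where the overlap deficit $\Omega_0$ would have to be too large, or where $\Omega_X$ and $\Omega_Y$ are Weyl-conjugate in a way that makes no rotation help, so they must be set aside here (and are handled as exceptional tuples in Lemma~\ref{Excep}).

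Hypotheses (ii) and (iii) are lighter. For (ii) I would take $M$ to be a real or imaginary part of a root vector $E_\beta$ where $\beta$ is a non-annihilating root of $Y$ at position $1$ but \emph{not} in $\Omega_0$ — equivalently $\beta\in\Omega_Y\setminus\Omega_0$ — chosen so that $ad(M)$ maps the small remaining piece $\Omega_0$ into $sp\{\Omega,\mathfrak{g}_{n-1}\}$; since $ad(M)$ of anything in $\mathcal{V}_n\cup\mathfrak{t}_n$ lands in $sp\,\mathcal{V}_n+\mathfrak{t}_n\subseteq sp\{\Omega,\mathfrak{g}_{n-1}\}$ whenever the sum/difference of the two indices stays admissible, and $\mathcal{N}_{X_L}\setminus\Omega_0\subseteq\mathcal{V}_{n-1}\cup(\Omega_Y\setminus\Omega_0)$ is already inside $sp\{\Omega,\mathfrak{g}_{n-1}\}$, condition (ii) holds for all powers $ad^k(M)$ because $sp\{\Omega,\mathfrak{g}_{n-1}\}$ is $ad(M)$-invariant once we know $ad(M)(\Omega)\subseteq sp\,\Omega$ up to a $\mathfrak{g}_{n-1}$-term; this needs a one-line check from \eqref{bracketaction}. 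For (iii) I must check that $Ad(\exp sM)(\Omega_0)$ has nonzero projection, spanning, onto the orthogonal complement $W$ of $sp\{\mathfrak{g}_{n-1},\Omega\}$ in $\mathfrak{g}_n$; but $W$ is visibly zero — we have $\mathfrak{g}_n=\mathfrak{t}_n\oplus sp\,\mathcal{V}_n=\mathfrak{g}_{n-1}\oplus(\mathfrak{t}_n\ominus\mathfrak{t}_{n-1})\oplus sp(\mathcal{V}_n\setminus\mathcal{V}_{n-1})$, and one extra torus direction $\mathfrak{t}_n\ominus\mathfrak{t}_{n-1}$ is hit by $[RE_\alpha,IE_\alpha]$ for a suitable $\alpha\in\Omega$, so $sp\{\mathfrak{g}_{n-1},\Omega\}=\mathfrak{g}_n$ already; thus $W=\{0\}$ and (iii) is vacuous — which is why the actual spanning burden is entirely in (i). (If in some edge case that last torus direction is not already in $sp\,\Omega$, then $W$ is one-dimensional and (iii) becomes: $[Ad(\exp sM)RE_\beta,\,Ad(\exp sM)IE_\beta]$ has nonzero $W$-component for small $s$, which follows from $[RE_\beta,IE_\beta]\ne 0$ being a nonzero torus element by continuity, Lemma~\ref{elem}.) Assembling these, the General Strategy delivers that $(X,Y)$ is an absolutely continuous pair.
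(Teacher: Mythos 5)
Your high-level plan — apply the General Strategy with $L=2$ and $\Omega=\mathcal{V}_n\setminus\mathcal{V}_{n-1}$, reduce hypothesis (i) to Lemma~\ref{genstrategyb}, and case-check by dominant types — is indeed the structure of the paper's proof. But the proposal contains a decisive error in the discussion of hypotheses (ii) and (iii), and it misses a real obstruction in type $C_n$.

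The claim that $W=\{0\}$, i.e.\ that $sp\{\mathfrak{g}_{n-1},\Omega\}=\mathfrak{g}_n$, is false. The set $sp\{\mathfrak{g}_{n-1},\Omega\}$ is a \emph{linear} span, not a Lie subalgebra: it equals $\mathfrak{g}_{n-1}\oplus sp(\mathcal{V}_n\setminus\mathcal{V}_{n-1})$, which omits the one-dimensional complement $\mathfrak{t}_n\ominus\mathfrak{t}_{n-1}$ inside the torus. The torus element $[RE_\alpha,IE_\alpha]$ is a Lie bracket of two elements of $\Omega$, not a linear combination, so it is \emph{not} in $sp\{\mathfrak{g}_{n-1},\Omega\}$. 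Consequently $W$ is one-dimensional in types $B_n$, $D_n$, $SU(n+1)$, and this is the main case, not an edge case. This same error infects your (ii): you argue that $ad^k(M)$ stays inside $sp\{\Omega,\mathfrak{g}_{n-1}\}$ because $sp\,\mathcal{V}_n+\mathfrak{t}_n\subseteq sp\{\Omega,\mathfrak{g}_{n-1}\}$, but the left side is all of $\mathfrak{g}_n$ while the right side is a proper subspace. In fact $sp\{\Omega,\mathfrak{g}_{n-1}\}$ is \emph{not} $ad(M)$-invariant — $ad(M)(\Omega_0)$ leaves it, which is exactly what makes (iii) nontrivial — and (ii) is a selective statement that must be checked on $\mathcal{N}_{X_L}\setminus\Omega_0$ specifically, as the paper does. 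Finally, your proposed reformulation of (iii) as nonvanishing of $[Ad(\exp sM)RE_\beta,\,Ad(\exp sM)IE_\beta]$ misreads the condition: (iii) asks that the \emph{linear projections} of the vectors $Ad(\exp sM)(F)$, $F\in\Omega_0$, onto $W$ span $W$; the reason this holds in the paper's argument is that $ad(M)(IEe_1+e_n)=[REe_1+e_n,IEe_1+e_n]$ is a nonzero torus element, so the first-order term of $Ad(\exp sM)(IEe_1+e_n)$ already produces the missing torus direction — a different statement than the one you wrote.

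Separately, for type $C_n$ with neither $X$ nor $Y$ of dominant $SU$ type, the choice $\Omega=\mathcal{V}_n\setminus\mathcal{V}_{n-1}$ cannot satisfy hypothesis (i) at all: here $FE(2e_1)\in\Omega$, but $2e_1$ annihilates both $X$ and $Y$ (since both have a $C_J$-block), so $FE(2e_1)\notin\Omega_X\cup\Omega_Y$ and no choice of $g$, $\Omega_0$ can make the left-hand side span $\Omega$. The paper therefore shrinks $\Omega$ to $\{FEe_1\pm e_j\}$, at which point $sp\{\Omega,\mathfrak{g}_{n-1}\}$ has codimension \emph{three}, the General Strategy no longer applies as stated, and a separate determinant argument is needed. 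Your proposal does not anticipate this.
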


\begin{proof}
The main task of the proof is to show that any eligible pair, other than one
of the two exceptional pairs mentioned, satisfy properties (i) - (iii) of
the general strategy, Prop.~\ref{general strategy}.

\textbf{Part I: $g_{n}$ is type $B_{n}$, $C_{n}$ or $D_{n}$.}

The proof is divided into three cases depending on the dominant types of $X$
and $Y$.

Case 1: Neither $X$\ nor $Y$ are of dominant $SU$ type.

With the notation as before, we have $S_{X}=2J$ and $S_{Y}=2K$ (meaning $X$
is dominant $B_{J}$ ($C_{J}$ or $D_{J}$) type and $Y$ is dominant $B_{K}$ ($%
C_{K}$ or $D_{K}$) type). Applying a Weyl conjugate, if necessary, we can
assume without loss of generality that 
\begin{equation*}
\Omega _{X}=\{FEe_{1}\pm e_{j}:J<j\leq n,\text{ }F=R,I\}
\end{equation*}
and similarly 
\begin{equation*}
\Omega _{Y}=\{FEe_{1}\pm e_{j}:K<j\leq n,F=R,I\}.
\end{equation*}

Case 1(a): $g_{n}$ is type $D_{n}$.

Recall that $\mathcal{V}_{n}$ is set of all real and imaginary parts of the
chosen Weyl basis of root vectors of $\mathfrak{g}_{n}$. Put 
\begin{equation*}
\Omega =\mathcal{V}_{n}\diagdown \mathcal{V}_{n-1}=\{FEe_{1}\pm
e_{j}:j=2,\dots,n,F=R,I\}
\end{equation*}
and 
\begin{equation*}
\Omega _{0}=\{REe_{1}+e_{n},IEe_{1}+e_{n}\}.
\end{equation*}

If $H\in \mathfrak{g}_{n-1}$, then $H$ is a linear combination of a torus
element of $\mathfrak{g}_{n-1}$ and the vectors $REe_{i}\pm e_{j}$, $%
IEe_{i}\pm e_{j}$ with $2\leq i<j\leq n$. It follows easily from (\ref%
{bracketaction}) that $ad(H)(\Omega )\subseteq sp\Omega $.

Take $g\in G_{n-1}$ to be the Weyl conjugate that permutes the letters $1+j$
and $K+j$ for $j=1,\dots,J-1$. This is well defined and leaves the letter $n$
unchanged as the eligibility condition ensures $J+K-1\leq n-1$.
Consequently, $Ad(g)(FEe_{1}\pm e_{K+j})=FEe_{1}\pm e_{1+j}$ for $%
j=1,\dots,J-1,$ and all other vectors in $\Omega $ are fixed, including $%
FEe_{1}\pm e_{n}$. Thus 
\begin{equation*}
\{Ad(g)(\Omega _{Y}),\text{ }\Omega _{X}\diagdown \Omega _{0}\}=\{FEe_{1}\pm
e_{k}:k=2,\dots,n\}
\end{equation*}
proving that (i) of the general strategy, Prop~\ref{general strategy} (with $%
L=2$) is satisfied.

Let $M=REe_{1}+e_{n}\in \mathfrak{g}_{n}$. Applying (\ref{bracketaction})
again, we see that if $H=FEe_{1}\pm e_{j}$ for some $j<n$, then $%
ad(M)(H)=cFEe_{j}\mp e_{n}\in \mathfrak{g}_{n-1}$ for a non-zero constant $c$
depending on $j,n$ and $F$. If $H=FEe_{i}\pm e_{n}$, then $%
ad(M)(H)=cFEe_{1}\mp e_{i}\in sp(\Omega \diagdown \Omega _{0})$. Finally,
note that $ad(M)(H)=0$ if $H=FEe_{i}\pm e_{j}$ for $1<i,j<n$ or $%
H=FEe_{1}-e_{n}$. This proves $ad^{k}(M):\mathcal{N}_{X}\backslash \Omega
_{0}\rightarrow sp\{\Omega ,\mathfrak{g}_{n-1}\}$ for all positive integers $%
k,$ so that property (ii) of the general strategy is satisfied.

As $sp\{\Omega ,\mathfrak{g}_{n-1}\}$ is of co-dimension one, its orthogonal
complement is spanned by the projection onto any element in the complement
of $sp\{\Omega ,\mathfrak{g}_{n-1}\}$. The torus element, 
\begin{equation*}
ad(M)(IEe_{1}+e_{n})=[REe_{1}+e_{n},IEe_{1}+e_{n}]:=t_{1}
\end{equation*}
is such an element. Since 
\begin{equation*}
ad(M)(t_{1})=[REe_{1}+e_{n},t_{1}]=cIEe_{1}+e_{n}
\end{equation*}
for some $c\neq 0$ (see \ref{brtorus}), it follows that 
\begin{equation*}
Ad(\exp sM)(IEe_{1}+e_{n})=a(s)IEe_{1}+e_{n}+sb(s)t_{1}
\end{equation*}
where $a(s),b(s)\rightarrow 1$ as $s\rightarrow 0$. Therefore hypothesis
(iii) of Prop.~\ref{general strategy} is also fulfilled with any $s>0$.
Applying that proposition, we conclude that $\mu _{X}\ast \mu _{Y}$ is
absolutely continuous.

Case 1(b): $g_{n}$\ is type $B_{n}$.

Again, we will apply the general strategy, but here with 
\begin{equation*}
\Omega =\mathcal{V}_{n}\backslash \mathcal{V}_{n-1}=\{FEe_{1}\pm e_{j}\text{
, }FEe_{1}:j=2,\dots,n,F=R,I\}
\end{equation*}
and 
\begin{equation*}
\Omega _{0}=\{REe_{1}+e_{n},IEe_{1}+e_{n}\}.
\end{equation*}

The fact that $ad(H)(\Omega )\subseteq \Omega $ whenever $H\in \mathfrak{g}
_{n-1}$ follows easily from properties of the roots, as with the case $D_{n}$%
.

For $t>0$, let $g_{t}=(\exp tREe_{n})g$ where $g\in G_{n-1}$ corresponds to
the Weyl conjugate that permutes the letters $1+j$ and $K+j$ for $%
j=1,\dots,J-1 $ as in the previous case. Since $REe_{n}\in \mathfrak{g}_{n-1}
$, $g_{t}\in G_{n-1}$. Observe that 
\begin{equation*}
\left[ REe_{n},FEe_{1}\right] =cFEe_{1}+e_{n}\;+c^{\prime }FEe_{1}-e_{n}
\end{equation*}
and 
\begin{equation*}
\left[ REe_{n},FEe_{1}\pm e_{j}\right] = 
\begin{cases}
c^{(\pm )}FEe_{1} & \text{if }j=n \\ 
0 & \text{else}%
\end{cases}%
\end{equation*}
with $c,c^{\prime }$ and $c^{(\pm )}$ non-zero constants. In particular,
this implies 
\begin{equation*}
Ad(\exp tREe_{n})(FEe_{1}\pm e_{j})=FEe_{1}\pm e_{j}\text{ for }j\neq n.
\end{equation*}
Since $Ad(g)(FEe_{1}\pm e_{K+j})=FEe_{1}\pm e_{1+j}$ for $j=1,\dots,J-1$ and
the eligibility condition ensures $Ad(g)$ fixes $FEe_{1}\pm e_{n}$, it
follows that for $j=1,\dots,J-1$ we have 
\begin{align*}
Ad(g_{t})(FEe_{1}\pm e_{K+j}) &=Ad(\exp tREe_{n})(FEe_{1}\pm e_{1+j}) \\
&=FEe_{1}\pm e_{1+j}\text{ }
\end{align*}
and 
\begin{align*}
Ad(g_{t})(FEe_{1}\pm e_{n}) &=Ad(\exp tREe_{n})(FEe_{1}\pm e_{n}) \\
&=a(t)FEe_{1}\pm e_{n}\;+tb(t)FEe_{1}\;+t^{2}c(t)FEe_{1}\mp e_{n}
\end{align*}
where $a(t)\rightarrow 1$ as $t\rightarrow 0$, and $b(t)$ and $c(t)$
converge to non-zero scalars\footnote{$a(t),b(t),c(t)$ depend on $F$ and the
choice of $\pm $, as well as $t$. From here on we will omit noting this
dependence, unless it is important.}. All other choices of $FEe_{1}\pm e_{j}$
are fixed by $Ad(g_{t})$. Hence 
\begin{align*}
&sp\{FEe_{1}-e_{n},Ad(g_{t})(FEe_{1}\pm e_{n}):F=R,I\} \\
=\ &sp\{FEe_{1}-e_{n},FEe_{1}+e_{n}\;+tb^{\prime
}(t)FEe_{1},FEe_{1}\;+tc^{\prime }(t)FEe_{1}+e_{n}:F=R,I\}
\end{align*}
where $b^{\prime }(t)$ and $c^{\prime }(t)$ converge to non-zero limits as $%
t\rightarrow 0$. Since 
\begin{equation*}
\{FEe_{1}\pm e_{n},FEe_{1}:F=R,I\}
\end{equation*}
is a set of six linearly independent vectors, so too is the collection 
\begin{equation*}
\{FEe_{1}-e_{n},FEe_{1}+e_{n}\;+tb^{\prime }(t)FEe_{1},\
FEe_{1}\;+tc^{\prime }(t)FEe_{1}+e_{n}:F=R,I\}
\end{equation*}
for sufficiently small $t$, and therefore they span the same space. Because $%
\Omega _{X}\backslash \Omega _{0}$ contains $FEe_{1}-e_{n}$, it follows that 
\begin{align*}
&sp\{Ad(g_{t})(\Omega _{Y}),\Omega _{X}\backslash \Omega _{0}\} \\
=\ &sp\{Ad(g_{t})(FEe_{1}\pm e_{k}),FEe_{1}\pm
e_{j},FEe_{1}-e_{n}:k>K,J<j<n,F=R,I\} \\
=\ &sp\{FEe_{1}\pm e_{j},FEe_{1}\pm e_{n}\text{, }FEe_{1}:j\leq
n,F=R,I\}=sp\Omega.
\end{align*}

Again, put $M=REe_{1}+e_{n}\in \mathfrak{g}_{n}$. As with type $D_{n}$, $%
ad^{k}(M)(FEe_{1}\pm e_{j})\in sp\{\mathfrak{g}_{n-1},\Omega \}$ for all $k$
and $j<n$, and $ad(M)(FEe_{1}-e_{n})=0$. Furthermore, $ad(M)(FEe_{j})=0$ if $%
j\neq 1,n,$ $ad(M)(FEe_{n})=cFEe_{1}$ and $ad(M)(FEe_{1})=cFEe_{n}$, so
property (ii) of the general strategy holds. As in the first case, $sp\{ 
\mathfrak{g}_{n-1},\Omega \}$ is of co-dimension one in $\mathfrak{g}_{n},$
and just as in type $D_{n}$ property (iii) holds, so we deduce the absolute
continuity of $\mu _{X}\ast \mu _{Y}$ by appealing to Prop.~\ref{general
strategy}.

Case 1(c): $g_{n}$\ is type $C_{n}$.

Here we will use a variant on the general strategy. As with type $D_{n}$ we
begin with 
\begin{equation*}
\Omega =\{FEe_{1}\pm e_{j}:j=2,\dots,n,F=R,I\}
\end{equation*}
and $g$ the Weyl conjugate permuting the letters $1+j$ and $K+j$ for $%
j=1,\dots,J-1$. Take 
\begin{equation*}
\Omega _{0}=\{FEe_{1}\pm e_{n}:F=R,I\}.
\end{equation*}
The eligibility condition gives that $sp\{Ad(g)(\Omega _{Y}),\Omega
_{X}\backslash \Omega _{0}\}=sp\Omega $.

As with type $D_{n}$, $ad(FEe_{i}\pm e_{j})(\Omega )\subseteq sp\{\Omega , 
\mathfrak{g}_{n-1}\}$ for all $1<i<j\leq $ $n$ and similarly, $%
ad(FE(2e_{j}))(\Omega )\subseteq \Omega $ for $j>1$, so $ad(H)(\Omega
)\subseteq sp\{\Omega ,\mathfrak{g}_{n-1}\}$ whenever $H\in \mathfrak{g}
_{n-1}$. Thus, as in the proof of the general strategy, upon applying the
induction assumption we can deduce there is some $h\in G_{n-1}$ such that 
\begin{equation}
sp\{Ad(h)(\mathcal{N}_{Y}),\mathcal{N}_{X}\backslash \Omega
_{0}\}=sp\{\Omega ,\mathfrak{g}_{n-1}\}.  \label{property1}
\end{equation}

Once again, we will put $M=REe_{1}+e_{n}\in \mathfrak{g}_{n}$. As with the
types $B_{n}$ and $D_{n}$, standard facts about roots show that $ad(M)(H)\in
sp\{\Omega ,\mathfrak{g}_{n-1}\}$ for all $H\in \mathcal{N}_{X}\backslash
\Omega _{0}$. In fact, for all $k\geq 1$, $ad^{k}(M)(H)\in sp\{\Omega , 
\mathfrak{g}_{n-1}\}$ for all $H\in \mathcal{N}_{X}\backslash \Omega _{0}$ 
\textit{except for }$H=FE(2e_{n})$ as $ad^{k}(M)(FE(2e_{n}))$ has a
component in $FE(2e_{1})$. (Recall that $FE(2e_{n})\in \mathcal{N}_{X}$
since the only roots $2e_{j}\in \Phi _{X}$ are those with $j\leq J$.) It is
because of this exception that we cannot appeal directly to the general
strategy.

Another difference between this set up and the situation for types $B_{n}$
and $D_{n}$ is that here $sp\{\Omega ,\mathfrak{g}_{n-1}\}$ has co-dimension
three, its orthogonal complement being spanned by $RE(2e_{1}),IE(2e_{1})$
and the projection onto the torus element $[REe_{1}+e_{n},IEe_{1}+e_{n}]$.
That will also complicate matters.

Let $\Lambda $ be the subspace spanned by the torus of $\mathfrak{g}_{n-1}$
and the vectors $RE_{\beta }$ and $IE_{\beta }$ where $\beta $ ranges over
all the positive roots except \thinspace $2e_{1},2e_{n}$, 
\begin{equation*}
\Lambda :=sp\{\Omega ,\mathfrak{g}_{n-1}\}\ominus
sp\{RE(2e_{n}),IE(2e_{n})\}.
\end{equation*}
Let $\mathcal{P}$ be the orthogonal projection onto $\Lambda $. Since $%
\mathcal{N}_{X}\diagdown \{\Omega _{0},FE(2e_{n})\}\subseteq \Lambda $,
Property (\ref{property1}) \ implies that 
\begin{equation*}
sp\{\mathcal{P}(Ad(h)(\mathcal{N}_{Y})),\mathcal{N}_{X}\diagdown \{\Omega
_{0},RE(2e_{n}),IE(2e_{n})\}\}=\Lambda .
\end{equation*}
Choose $Y_{\beta }^{F},Y_{j}\in Ad(h)\mathcal{N}_{Y}$ and $X_{\beta
}^{F},X_{j}\in \mathcal{N}_{X}\diagdown \{\Omega
_{0},RE(2e_{n}),IE(2e_{n})\} $ such that

(i) $Y_{\beta }^{F}+X_{\beta }^{F}=FE_{\beta }+W_{\beta }^{F}$ where $%
W_{\beta }^{F}\in sp\{RE(2e_{n}),IE(2e_{n})\}$, $F=R,I$ and $\beta $ ranges
over all roots except $2e_{1},2e_{n},$ and

(ii) $Y_{j}+X_{j}=t_{j}+W_{j}$ where $j=2,\dots,n$, $\{t_{2},\dots,t_{n}\}$
is a basis for $\mathfrak{t}_{n-1}$ and $W_{j}\in sp\{FE(2e_{n})\}$.

Note that if we put $t_{1}=[REe_{1}+e_{n},IEe_{1}+e_{n}]$, then $%
\{t_{1},\dots,t_{n}\}$ is a basis for $\mathfrak{t}_{n}$.

This collection of vectors $\{Y_{\beta }^{F}+X_{\beta }^{F},Y_{j}+X_{j}\}$
is linearly independent and hence for small enough $s>0$, so is also the set 
\begin{equation*}
\{Y_{\beta }^{F}+Ad(\exp sM)(X_{\beta }^{F}),Y_{j}+Ad(\exp sM)(X_{j}):\beta
\neq 2e_{1},2e_{n},\ j=2,\dots,n,F=R,I\}.
\end{equation*}
Observe that 
\begin{align*}
Y_{\beta }^{F}+Ad(\exp sM)(X_{\beta }^{F}) &=Y_{\beta }^{F}+X_{\beta
}^{F}+(Ad(\exp sM)-Id)(X_{\beta }^{F}) \\
&=FE_{\beta }+W_{\beta }^{F}+sQ_{\beta }^{F}(s),
\end{align*}
where the vector $Q_{\beta }^{F}(s)$ depends on $s,$ but has bounded norm.
The projection of $Q_{\beta }^{F}(s)$ onto $sp\{RE2e_{1},IE2e_{1}\}$ is zero
since $Ad(\exp sM)$ maps $\mathcal{N}_{X}\diagdown \{\Omega
_{0},FE(2e_{n}):F=R,I\}$ into $\mathfrak{g}_{n}\ominus
sp\{RE2e_{1},IE2e_{1}\}$. Also, it is clear from the definitions that for $%
\beta \neq e_{1}-e_{n}$, the projection of $FE_{\beta }+W_{\beta }^{F}$ onto 
$sp\{REe_{1}-e_{n},IEe_{1}-e_{n}\}$ is zero. Similar statements can be made
for $Y_{j}+Ad(\exp sM)(X_{j})$.

\textit{Claim:} The collection of vectors, $Y_{\beta }^{F}+Ad(\exp
sM)(X_{\beta }^{F})$, $Y_{j}+Ad(\exp sM)(X_{j})$ over all positive roots $%
\beta \neq 2e_{1},2e_{n}$, $F=R,I$, and $j=2,\dots,n,$ together with the
four vectors $Ad(\exp sM)(FE(2e_{n}))$, $Ad(\exp sM)(FEe_{1}-e_{n})$ for $%
F=R,I$, are linearly independent.

To prove this we first observe that 
\begin{align*}
[REe_{1}+e_{n},FE(2e_{1})] &=c_{1}FEe_{1}-e_{n} \\
[REe_{1}+e_{n},FE(2e_{n})] &=c_{2}FEe_{n}-e_{1}, \\
[REe_{1}+e_{n},FEe_{1}-e_{n}] &=c_{3}FE(2e_{1})+c_{4}FE(2e_{n})
\end{align*}
where $c_{j}\neq 0$. Thus 
\begin{equation}
Ad(\exp sM)(FEe_{1}-e_{n})=a_{s}^{F}FEe_{1}-e_{n}\;+sb_{s}^{F}FE(2e_{1})
+sc_{s}^{F}FE(2e_{n})  \label{four}
\end{equation}
and 
\begin{equation}
Ad(\exp sM)(FE(2e_{n}))=sb_{s}^{\prime F}FEe_{1}-e_{n}+s^{2}c_{s}^{\prime
F}FE(2e_{1})\;+a_{s}^{\prime F}FE(2e_{n})  \label{four'}
\end{equation}
where the coefficients, $a_{s}^{F},a_{s}^{\prime F},b_{s}^{F},b_{s}^{\prime
F},c_{s}^{F},c_{s}^{\prime F},$ converge to non-zero constants as $%
s\rightarrow 0$.

The vectors listed in (\ref{four}) and (\ref{four'}), as well as those in $%
sp\{\mathfrak{g}_{n-1},\Omega \},$ belong to $\mathfrak{g}_{n}\ominus
sp\{t_{1}\}$. We view them as vectors in $\mathbb{R}^{d}$ with $d=\dim 
\mathfrak{g}_{n}-1$, whose coordinates are given by the basis for $\mathfrak{%
\ g}_{n}\ominus sp\{t_{1}\}$ consisting of the torus elements, $%
\{t_{2},\dots,t_{n}\}$, together with the real and imaginary parts of the
Weyl basis $\{E_{\alpha }\}$, taking as the final six positions the basis
vectors $FEe_{1}-e_{n}$, $FE(2e_{n})$ and $FE(2e_{1})$, $F=R,I$.

With this understanding, consider the square matrix whose rows are given by
the vectors $Y_{j}+Ad(\exp sM)(X_{j})$ for $j=2,\dots,n$; followed by the
vectors $Y_{\beta }^{F}+Ad(\exp sM)(X_{\beta }^{F})$, $\beta \neq
2e_{1},2e_{n}$, ordered consistently to above so that the final two come
from $\beta =e_{1}-e_{n}$; and then finally the four vectors $Ad(\exp
sM)(FE(2e_{n}))$ and $Ad(\exp sM)(FEe_{1}-e_{n})$ (for a small, but fixed,
choice of $s$).

The calculations above show that this matrix, denoted $A=(A_{ij}),$ has the
form 
\begin{equation*}
A=\left[ 
\begin{array}{cccc}
\left[ I_{d-6}+O(s)\right] _{(d-6)\times (d-6)} & \left[ O(s)\right]
_{(d-6)\times 2} & [\ast ]_{(d-6)\times 2} & [0]_{(d-6)\times 2} \\ 
&  &  &  \\ 
\left[ O(s)\right] _{2\times (d-6)} & \left[ I_{2}+O(s)\right] _{2\times 2}
& \left[ 
\begin{array}{cc}
\ast & \ast \\ 
\ast & \ast%
\end{array}
\right] & \left[ 
\begin{array}{cc}
0 & 0 \\ 
0 & 0%
\end{array}
\right] \\ 
&  &  &  \\ 
\lbrack 0]_{2\times (d-6)} & \left[ 
\begin{array}{cc}
sb_{s}^{\prime R} & 0 \\ 
0 & sb_{s}^{\prime I}%
\end{array}
\right] & \left[ 
\begin{array}{cc}
a_{s}^{\prime R} & 0 \\ 
0 & a_{s}^{\prime I}%
\end{array}
\right] & \left[ 
\begin{array}{cc}
O(s^{2}) & 0 \\ 
0 & O(s^{2})%
\end{array}
\right] \\ 
&  &  &  \\ 
\lbrack 0]_{2\times (d-6)} & \left[ 
\begin{array}{cc}
a_{s}^{R} & 0 \\ 
0 & a_{s}^{I}%
\end{array}
\right] & \left[ 
\begin{array}{cc}
sc_{s}^{R} & 0 \\ 
0 & sc_{s}^{I}%
\end{array}
\right] & \left[ 
\begin{array}{cc}
sb_{s}^{R} & 0 \\ 
0 & sb_{s}^{I}%
\end{array}
\right]%
\end{array}
\right]
\end{equation*}
where $I_{m}$ denotes the $m\times m$ identity matrix, $O(s^{k})$ means
terms dominated by $Cs^{k}$ for some constant $C$ independent of $s$ and $%
\ast $ denotes terms that may depend on $s$, but are bounded independently
of $s$.

We estimate the determinant of this matrix using the Leibniz formula: Since $%
|A_{11}A_{22}\cdots A_{dd}|\geq C_{0}s^{2}$ for some $C_{0}>0$ and all the
other products \linebreak $A_{1\sigma (1)}A_{2\sigma (2)}\cdots A_{d\sigma
(d)}$, where $\sigma $ is a permutation of $\{1,\dots,d\}$, are dominated in
absolute value by $C_{1}s^{3}$, the determinant is non-zero for sufficiently
small $s>0$. This completes the proof of the claim.

As there are the appropriate number of vectors, these vectors form a basis
for $\mathfrak{g}_{n}\ominus sp\{t_{1}\}$. Recall that $X_{\beta }^{F}$, $%
FE(2e_{n})$ and $FEe_{1}-e_{n}$ all belong to $\mathcal{N}_{X}\diagdown
\{FEe_{1}+e_{n}:F=R,I\}$, hence 
\begin{equation*}
sp\{Ad(h)\mathcal{N}_{Y},Ad(\exp sM)(\mathcal{N}_{X}\diagdown
\{FEe_{1}+e_{n}\})\}=\mathfrak{g}_{n}\ominus sp\{t_{1}\}.
\end{equation*}

Finally, our familiar calculation shows 
\begin{equation*}
Ad(\exp sM)(IEe_{1}+e_{n})=a_{s}IEe_{1}+e_{n}+sb_{s}t_{1}
\end{equation*}
where $b_{s}$ converges to a non-zero constant. It follows that for small
enough $s$, 
\begin{equation*}
sp\{Ad(h)(\mathcal{N}_{Y}),Ad(\exp sM)(\mathcal{N}_{X})\}=\mathfrak{g}_{n},
\end{equation*}
as we desired to show.

Case 2: Both $X$\ and $Y$\ are dominant $SU$ type.

First, assume the Lie algebra is type $B_{n}$ or $C_{n}$. According to \cite[
Thm.~8.2]{GHMathZ} both $\mu _{X}^{2}$ and $\mu _{Y}^{2}$ belong to $L^{2}$.
Applying Holder's inequality we see that $\mu _{X}\ast \mu _{Y}\in L^{2}$.
Being compactly supported, it follows that $\mu _{X}\ast \mu _{Y}$ is in $%
L^{1}$, and hence is a measure that is absolutely continuous with respect to
Lebesgue measure. (Note that the same argument applies to the Lie algebra of
type $D_{n}$ unless one of $X$ or $Y$ is of type $SU(n)$.)

However, we prefer to give an argument that is independent of \cite{GHMathZ}
as the techniques will then have more general application and such an
argument will be needed in the case of type $D_{n}$, in any case. For this,
in the case of type $B_{n}$, put 
\begin{align*}
\Omega &=\{FEe_{1},FEe_{1}\pm e_{j}:j\geq 2,F=R,I\}\text{ and} \\
\text{ }\Omega _{0} &=\{REe_{1},IEe_{1}\}
\end{align*}
while in the case of type $C_{n}$, put 
\begin{align*}
\Omega &=\{FE(2e_{1}),FEe_{1}\pm e_{j}:j\geq 2,F=R,I\}\text{ and } \\
\Omega _{0} &=\{RE(2e_{1}),IE(2e_{1})\}.
\end{align*}
In either case $ad(H)(\Omega )\subseteq sp\Omega $ for all $H\in \mathfrak{g}
_{n-1}$.

As $X,Y$ are dominant $SU$ type, both $\Omega _{X}$ and $\Omega _{Y}$
contain $FE(2)e_{1}$ and all the roots $FEe_{1}+e_{j}$, $j\geq 2$. If $g\in
G_{n-1}$ is the Weyl conjugate that changes the signs of the letters $%
2,\dots,n $, then $\{Ad(g)(\Omega _{X}),\Omega _{Y}\diagdown \Omega
_{0}\}=\Omega $. Now take $M=RE(2)e_{1}$ and apply the general strategy.

The arguments are similar when the Lie algebra is type $D_{n}$. Let 
\begin{equation*}
\Omega =\{FEe_{1}\pm e_{j}:j\geq 2,F=R,I\}.
\end{equation*}
As we do not permit both $X$ and $Y$ to be of type $SU(n)$, without loss of
generality $\Omega _{X}$ contains all the roots $FEe_{1}+e_{j}$ for $2\leq
j\leq n-1$, as well as both $FEe_{1}\pm e_{n},$ and $\Omega _{Y}$ contains
either all $FEe_{1}+e_{j}$ for $2\leq j$ or all $FEe_{1}+e_{j}$ for $2\leq
j\leq n-1$ and $FEe_{1}-e_{n}$. Let $\Omega _{0}$ be the choice of $%
\{FEe_{1}+e_{n}\}$ or $\{FEe_{1}-e_{n}\}$, depending on which belongs to $%
\Omega _{Y}$. Let $g\in G_{n-1}$ be the Weyl conjugate that changes the
signs $2,\dots,n-1$ (and $n$ if needed to be an even sign change). Then $%
Ad(g)(\Omega _{X})\supseteq \{FEe_{1}-e_{j},FEe_{1}\pm e_{n}:j\geq 2\}$ and
hence $\{Ad(g)(\Omega _{X}),\Omega _{Y}\diagdown \Omega _{0}\}=\Omega $.
Take $M=REe_{1}\pm e_{n}$ with the choice of $\pm $ depending on which
belongs to $\Omega _{Y}$.

Case 3: $X$\ and $Y$ are of different dominant type.

Without loss of generality assume $X$ is dominant $SU(m)$ type and $Y$ is
dominant $B_{J},C_{J}$ or $D_{J}$ type, depending on the type of the Lie
algebra. Eligibility implies that $2J+m\leq 2n.$

Let 
\begin{equation*}
\Omega =\{FEe_{1}\pm e_{j},\ (FE(2)e_{1}):j\geq 2\}.
\end{equation*}
(with the inclusion of $FEe_{1}$ if the Lie algebra is type $B_{n}$ or $%
FE(2e_{1})$ if the Lie algebra is $C_{n}$). We have 
\begin{equation*}
\Omega _{X}=\{FEe_{1}+e_{j},FEe_{1}-e_{n}:j<n,\}\text{ if }X=(a,\dots,a,-a) 
\text{ in }D_{n}
\end{equation*}
and 
\begin{equation*}
\Omega _{X}=\{FEe_{1}+e_{j},FEe_{1}-e_{k},(FE(2)e_{1}):j\geq 2,k>m,F=R,I\} 
\text{ otherwise.}
\end{equation*}
Put 
\begin{equation*}
\Omega _{0}=\{FEe_{1}+e_{n-J+1}\}\subseteq \Omega _{X}\cap \Omega _{Y}
\end{equation*}
(or $\Omega _{0}=\{FEe_{1}-e_{n}\}$ if $J=1$ and $X=(a,\dots,a,-a)$ in $%
D_{n})$. Applying a Weyl conjugate from $G_{n-1}$, we can assume 
\begin{equation*}
\Omega _{Y}=\{FEe_{1}\pm e_{j}:2\leq j\leq n-J+1,F=R,I\}.
\end{equation*}

If $n-J+1\geq m$, then we already have 
\begin{equation*}
\{\Omega _{Y},\Omega _{X}\diagdown \Omega _{0}\}=\Omega,
\end{equation*}
so property (i) of the general strategy holds with $g=Id.$ Take $%
M=REe_{1}+e_{n-J+1}$ (resp., take $M=REe_{1}-e_{n})$ to complete the
argument.

Otherwise $m+J-n\geq 2$ (which implies $J\geq 2$). Put 
\begin{equation*}
\Omega _{1}=\{FEe_{1}+e_{k}:2\leq k\leq n-J,F=R,I\}\subseteq (\Omega
_{Y}\cap \Omega _{X})\diagdown \Omega _{0}
\end{equation*}
and define 
\begin{equation*}
H=\sum_{j=2}^{J-1}REe_{j}+e_{n-J+j}+REe_{J-}e_{n}\text{ if }X=(a,\dots,a,-a) 
\text{ in type }D_{n}
\end{equation*}
and 
\begin{equation*}
H=\sum_{j=2}^{m+J-n}REe_{j}+e_{n-J+j}\text{ otherwise.}
\end{equation*}

As $J\neq n$, $e_{j}+e_{n-J+j}$ are roots of the Lie algebra $\mathfrak{g}
_{n-1}$. Let $2\leq k\leq m+J-n$. Observe that $k\neq n-J+j$ for any $j\geq
2 $, for if so, then $j=k-n+J$ $\leq m+2J-2n$ and therefore the eligibility
condition would imply $j\leq 0$. Thus, if $2\leq k\leq m+J-n$, then $%
ad(H)(FEe_{1}+e_{k})=c_{k}FEe_{1}-e_{n-J+k}$ (or $%
ad(H)(FEe_{1}+e_{J})=c_{J}FEe_{1}+e_{n}$ if $X=(a,\dots,-a)$).

The eligibility condition also implies 
\begin{equation*}
\Omega _{1}\supseteq \{FEe_{1}+e_{k}:2\leq k\leq m+J-n\},
\end{equation*}
therefore 
\begin{equation*}
sp\{ad(H)(\Omega _{1})\}\supseteq sp\{FEe_{1}-e_{j},FEe_{1}+e_{n}:n-J+2\leq
j\leq n-1,F=R,I\}
\end{equation*}
if $X=(a,\dots,a,-a)$ in type $D_{n}$ and 
\begin{equation*}
sp\{ad(H)(\Omega _{1})\}\supseteq sp\{FEe_{1}-e_{j}:n-J+2\leq j\leq
m,F=R,I\} \text{ otherwise.}
\end{equation*}

Since $\Omega _{Y}\diagdown \Omega
_{1}=\{FEe_{1}-e_{j},FEe_{1}+e_{n-J+1}:2\leq j\leq n-J+1\}$, in either case
we have 
\begin{equation*}
sp\{ad(H)(\Omega _{1}),\Omega _{Y}\diagdown \Omega _{1},\Omega _{X}\diagdown
\Omega _{0}\}=sp\Omega .
\end{equation*}
By Lemma \ref{genstrategyb} there is some $g\in G_{n-1}$ (namely, $g=\exp tH$
for sufficiently small $t$) such that 
\begin{equation*}
sp\{Ad(g)(\Omega _{Y}),\Omega _{X}\diagdown \Omega _{0}\}=sp\Omega .
\end{equation*}
Again, take $M=REe_{1}+e_{n-J+1}$ and apply the general strategy to complete
the argument.

\textbf{Part II: $g_{n}$ is type $SU(n)$.}

This is very similar to case 1(a). Let 
\begin{equation*}
\Omega =\{FEe_{1}-e_{j}:2<j\leq n,F=R,I\}.
\end{equation*}
We have 
\begin{align*}
\Omega _{X} &=\{FEe_{1}-e_{j}:S_{X}<j\leq n,\text{ }F=R,I\}\text{ and} \\
\Omega _{Y} &=\{FEe_{1}-e_{j}:S_{Y}<j\leq n,\text{ }F=R,I\}.
\end{align*}
Put $\Omega _{0}=\{FEe_{1}-e_{n}:F=R,I\}$. Take $g\in SU(n-1)$ to be the
Weyl conjugate that interchanges the letters $S_{Y}+j$ and $1+j$ for $%
j=1,\dots,S_{X}-1$. The eligibility condition ensures this is well defined
and leaves $1$ and $n$ unchanged. Clearly $\{Ad(g)(\Omega _{Y}),\Omega
_{X}\diagdown \Omega _{0}\}=\Omega $. Take $M=REe_{1}-e_{n}$ and apply the
general strategy in the usual manner.
\end{proof}

\section{Proof of the Main Theorem}

In this section we will complete the proof of Theorem \ref{main}.

\begin{proof}[Proof of Theorem \protect\ref{main}]
\textbf{Necessary conditions for Absolute continuity: }Lemma \ref{Elig}
shows that absolutely continuous tuples are eligible, while in Lemma \ref%
{Excep} we saw that the exceptional tuples, other than possibly the pairs of
type $(SU(n),SU(n-1))$ in the Lie algebra of type $D_{n}$ with $n\geq 6$,
are not absolutely continuous.

The rest of the proof is devoted to establishing that the eligible,
non-exceptional tuples are absolutely continuous.

\textbf{Sufficient conditions for Absolute continuity for Lie types $A_{n}$, 
$B_{n}$ and $C_{n}$:}

Case $L=2$. The proof proceeds by induction on the rank $n$ of the Lie
algebra. We begin $A_{n}$ with $n=1$ (type $SU(2)$) and $B_{n}$ with $n=2$.
Although it is customary to only define $C_{n}$ for $n\geq 3$, there is no
harm in beginning with $C_{2}$, meaning the root system $\pm
\{2e_{1},2e_{2},e_{1}\pm e_{2}\}$, which is Lie isomorphic to $B_{2}$.

According to \cite[Thm.~8.2]{GHMathZ}, all non-zero pairs $(X,Y)$ in the Lie
algebras of type $SU(2)$ and $B_{2}$ have the property that both $\mu
_{X}^{2},\mu _{Y}^{2}\in L^{2}$. Thus $\mu _{X}\ast \mu _{Y}$ is a compactly
supported measure in $L^{2}$ and hence is an absolutely continuous measure.
The existence of $g_{1},g_{2}\in G_{n}$ with 
\begin{equation*}
\sum_{i=1}^{2}T_{Ad(g_{i})(X_{i})}(O_{X_{i}})=\mathfrak{g}_{n}
\end{equation*}
is a Lie isomorphism invariant, thus from Prop.~\ref{key} we can also deduce
that $\mu _{X}\ast \mu _{Y}$ is an absolutely continuous measure for all
non-zero $(X_{1},X_{2})$ in the Lie algebra of type $C_{2}$.

Now, inductively assume that all eligible, non-exceptional pairs in $SU(n-1),
$ $B_{n-1}$ or $C_{n-1}$, with $n\geq 3,$ are absolutely continuous. (Of
course, there are no exceptional pairs in $B_{n-1}$ or $C_{n-1}$.)

Let $(X,Y)$ be an eligible, non-exceptional pair in $SU(n)$, $B_{n}$ or $%
C_{n}$, and form the reduced pair $(X^{\prime },Y^{\prime })$. The reduced
pair is eligible by Lemma \ref{eligible}. Notice that only an element of
type $SU(\frac{n+1}{2})\times SU(\frac{n-1}{2})$ in $SU(n)$ will reduce to
an element of type $SU(\frac{n-1}{2})\times SU(\frac{n-1}{2})$ in $SU(n-1)$.
Furthermore, a pair of elements each of type $SU(\frac{n+1}{2})\times SU( 
\frac{n-1}{2})$ is not eligible in $SU(n)$, thus we can assume $(X^{\prime
},Y^{\prime })$ is both eligible and non-exceptional. By the induction
assumption, $(X^{\prime },Y^{\prime })$ is an absolutely continuous pair.
But then the induction step, Prop.~\ref{indstep}, implies that $(X,Y)$ is
absolutely continuous.

Case $L\geq 3$. Again, we proceed by induction on $n$. We remark that as $%
\mu \ast \nu $ is absolutely continuous if $\mu $ is absolutely continuous
and $\nu $ is an arbitrary measure, the fact that the convolution of any two
non-zero orbital measures in type $SU(2),B_{2}$ or $C_{2}$ is absolutely
continuous, proves that the same is true for the convolution of any $L$
non-zero orbital measures. This starts the induction.

First, suppose $(X_{1},\dots,X_{L})$ is an eligible $L$-tuple in $B_{n}$ or $%
C_{n}$ with $n\geq 3$. We will let $\Omega $ be as in Prop.~\ref{indstep},
depending on whether $\mathfrak{g}$ is type $B_{n}$ or $C_{n}$, 
\begin{equation*}
\Omega =\{FEe_{1}\pm e_{j},FE(2)e_{1}:j=2,\dots,n,F=R,I\}.
\end{equation*}
As a pair of elements that is dominant $SU$ type in $B_{n}$ or $C_{n}$ is
eligible and not exceptional, the theorem for $L=2$ implies the convolution
of (even) their two orbital measures is absolutely continuous. Thus we may
assume that at most one $X_{i}$ is dominant $SU$ type.

Suppose that no $X_{i}$ are dominant $SU$ type and form the corresponding $%
X_{i}^{\prime }$. If $X_{i}^{\prime }$ and $X_{j}^{\prime }$ are dominant $SU
$, then the pair $(X_{i},X_{j})$ is eligible (and not exceptional), thus $%
\mu _{X_{i}}\ast \mu _{X_{j}}$ is absolutely continuous. Hence we can assume
that at most one $X_{i}^{\prime }$ is dominant $SU$ type.

Since $S_{X^{\prime }}=S_{X}-2$ when both $X$ and $X^{\prime }$ are dominant 
$B$ (or $C)$ type it follows that 
\begin{equation*}
\sum_{i=1}^{L}S_{X_{i}^{\prime }}\leq \sum_{i=1}^{L}S_{X_{i}}-2(L-1)\leq
2n(L-1)-2(L-1)=2(n-1)(L-1).
\end{equation*}
This shows that $(X_{1}^{\prime },\dots.,X_{L}^{\prime })$ is eligible in $%
\mathfrak{g}_{n-1}$. As it is not exceptional, the induction assumption
implies it is an absolutely continuous tuple.

Here $\Omega _{X_{i}}=\{FEe_{1}\pm e_{j}:j>J_{i}\}$ where $2J_{i}=S_{X_{i}}$%
. Taking $g_{i}$ to be the Weyl conjugate that switches appropriate letters
(and fixes the letters $1$ and $n$) we can arrange for 
\begin{equation*}
Ad(g_{i})(\Omega _{X_{i}})=\{FEe_{1}\pm
e_{j}:j=(i-1)n-\sum_{k=1}^{i-1}J_{k}+2,\dots,in-\sum_{k=1}^{i}J_{k}+1\}
\end{equation*}
(with suitable modifications if any of the specified choices of $j$ exceed $n
$).

If $(L-1)n-\sum_{k=1}^{L-1}J_{k}+1\geq n$, then 
\begin{equation*}
\bigcup\limits_{i=1}^{L-1}Ad(g_{i})\left( \Omega _{X_{i}}\right)
=\{FEe_{1}\pm e_{j}:j=2,\dots,n\},
\end{equation*}
and this coincides with the set $\Omega _{Y}$ for a suitable $Y$ of type $%
B_{1}$ (or $C_{1})$ (meaning type $B_{1}$ (or $C_{1}$) $\times SU(1)\times
\cdots \times SU(1))$. As always $S_{X}\leq 2(n-1)$, the pair $(Y,X_{L})$ is
eligible.

Otherwise, if we let $m=n-(L-1)n+\sum_{i=1}^{L-1}J_{i}$ and take a suitable
choice of $Y$ of type $B_{m}$ (or $C_{m})$, then 
\begin{equation*}
\bigcup\limits_{i=1}^{L-1}Ad(g_{i})\Omega _{X_{i}}=\Omega _{Y}.
\end{equation*}
The eligibility condition ensures that 
\begin{align*}
S_{Y}+S_{X_{L}} &=2\left( n-(L-1)n+\sum_{i=1}^{L-1}J_{i}\right) +2J_{L} \\
&\leq 2n-2(L-1)n+\sum_{i=1}^{L}S_{X_{i}}\leq 2n
\end{align*}
and thus the pair $(Y,X_{L})$ is eligible and clearly not exceptional. The
arguments given in the proof of Prop.~\ref{indstep} Case 1 show that then
there is some $g\in G_{n-1}$, $M\in \mathfrak{g}_{n}$ and $\Omega
_{0}\subseteq \Omega _{X_{L}}$ such that

(i) $sp\Omega =sp\{Ad(g)(\Omega _{Y}),\Omega _{X_{L}}\backslash \Omega
_{0}\};$

(ii) $ad^{k}(M):\mathcal{N}_{X_{L}}\backslash \Omega _{0}\rightarrow
sp\{\Omega ,\mathfrak{g}_{n-1}\}$ for all positive integers $k$; and

(iii) The span of the projection of $Ad(\exp sM)(\Omega _{0})$ onto the
orthogonal complement of $sp\{\mathfrak{g}_{n-1},\Omega \}$ in $\mathfrak{g}
_{n}$ is a surjection for all small $s>0.$

Since 
\begin{equation*}
sp\{Ad(g)(\Omega _{Y}),\Omega _{X_{L}}\backslash \Omega _{0}\}\subseteq
sp\{Ad(gg_{i})(\Omega _{X_{i}}),\Omega _{X_{L}}\backslash \Omega
_{0}:i=1,\dots.,L-1\}
\end{equation*}
we can call upon the general strategy, Prop.~\ref{general strategy}, with $%
g_{i}$ replaced there by $gg_{i},$ to deduce that $(X_{1},\dots,X_{L})$ is
an absolutely continuous tuple. This completes the argument when no $X_{i}$
are of dominant $SU$ type.

Otherwise, there is one $X_{i}$ which is of dominant $SU$ type, say $X_{L}$.
If there is another index $j$ such that $X_{j}^{\prime }$ is of dominant $SU$
type, then the pair $(X_{L},X_{j})$ is eligible and not exceptional, hence $%
\mu _{X_{L}}\ast \mu _{X_{j}}$ is absolutely continuous.

So we may assume all $X_{j}^{\prime }$ with $j\neq L$ are of dominant $B$
(or $C)$ type. Thus 
\begin{equation*}
\sum S_{X_{i}}^{\prime }\leq \sum S_{X_{i}}-2(L-1)\leq 2(n-1)(L-1),
\end{equation*}
so $(X_{1}^{\prime },\dots,X_{L}^{\prime })$ is an eligible $L$-tuple.
Again, taking $g_{i}$ to be suitable Weyl conjugates we have 
\begin{equation*}
\bigcup\limits_{i=1}^{L-1}Ad(g_{i})(\Omega _{X_{i}})=\{FEe_{1}\pm
e_{j}:j=2,\dots,(L-1)n-\sum_{i=1}^{L-1}J_{i}+1\}
\end{equation*}
and if we let $Y$ be of type $B_{m}$ where $m=n-(L-1)n+\sum_{i=1}^{L-1}J_{i},
$  then 
\begin{equation*}
\bigcup\limits_{i=1}^{L-1}Ad(g_{i})(\Omega _{X_{i}})=\Omega _{Y}.
\end{equation*}
The eligibility condition ensures that 
\begin{equation*}
S_{Y}+S_{X_{L}}=2\left( n-(L-1)n+\sum_{i=1}^{L-1}J_{i}\right) +S_{X_{L}}\leq
2n,
\end{equation*}
so the pair $(Y,X_{L})$ is eligible. Complete the proof using the arguments
of Prop.~\ref{indstep}, but this time using Case 3 as $X_{L}$ and $Y$ are of
opposite dominant types.

The argument is similar, but easier, if the Lie algebra is type $SU(n)$. We
first check that $(X_{1}^{\prime },\dots,X_{L}^{\prime })$ is eligible when $%
(X_{1},\dots,X_{L})$ is eligible. This is clear if at most one $X_{i}$ has $%
S_{X_{i}}=S_{X_{i}^{\prime }}$. If two or more $X_{i}$ have $%
S_{X_{i}}=S_{X_{i}^{\prime }}$, then these two satisfy $S_{X_{i}}\leq n/2$
and because all $S_{X^{\prime }}\leq n-2$, we have 
\begin{equation*}
\sum_{i=1}^{L}S_{X_{i}^{\prime }}\leq 2(n/2)+(L-2)(n-2)\leq (L-1)(n-1),
\end{equation*}
proving eligibility.

Set $\Omega =\{FEe_{1}-e_{j}:2\leq j\leq n\}$. We have $\Omega
_{X_{i}}=\{FEe_{1}-e_{j}:j>S_{X_{i}}\}$. Upon taking $g_{i}$ suitable Weyl
conjugates that permute letters we obtain 
\begin{equation*}
\bigcup\limits_{i=1}^{L-1}Ad(g_{i})\Omega _{X_{i}}=\Omega _{Y}.
\end{equation*}
where $Y$ is an element of the torus of $SU(n)$ of type $SU(m)$ with $%
m=n-(L-1)n+\sum_{i=1}^{L-1}J_{i}$. The eligibility assumption ensures $%
(X_{L},Y)$ is an eligible pair and it is clearly not exceptional. Now
complete the argument using the $L=2$ case in the same manner as for type $%
B_{n}$ and $C_{n}$.
\end{proof}

The many exceptional pairs in $D_{n},$ ($n=4$ in particular), cause
complications in proving the theorem for type $D_{n}$. We will again prove
the main theorem by an induction argument for $L=2$, but it will be
convenient to begin the argument with type $D_{5}$. In the next lemma we
will prove that all eligible, non-exceptional pairs in $D_{4}$ and $D_{5}$
are absolutely continuous. This will start the base case for us.

We will actually begin with $D_{3}$. Usually $D_{n}$ is defined for $n\geq 4$%
, but that is because $D_{3}$ is Lie isomorphic to type $A_{3}$. As the
problem of characterizing the $L$-tuples in type $A_{3}$ has already been
done we can use this characterization, together with the induction step,
Prop.~\ref{indstep}, to handle most of the eligible, non-exceptional pairs
in $D_{4}$ and $D_{5}$. This approach will work whenever the reduced pair is
known to be an absolutely continuous pair (in $D_{3}$ or $D_{4}$,
respectively). There will still be a few remaining pairs to consider and
these will be handled directly by verifying Wright's criteria for absolute
continuity, Thm.~\ref{WrCriteria}.

\begin{lemma}
\label{DnBaseCase}All the eligible, non-exceptional pairs in $D_{4}$ and $%
D_{5}$ are absolutely continuous.
\end{lemma}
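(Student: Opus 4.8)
The plan is to push the rank‑reduction machinery (Prop.~\ref{indstep}) as far as it will go, treating $D_4$ first and then using the $D_4$ conclusion to bootstrap $D_5$, and to mop up the finitely many pairs on which Prop.~\ref{indstep} is unavailable by a direct verification of Wright's criterion (Thm.~\ref{WrCriteria}).

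\textbf{The $D_4$ case.} Let $(X,Y)$ be an eligible, non‑exceptional pair in $D_4$ and form the reduced pair $(X',Y')$ in $D_3$. Since $D_4$ is not of type $SU(2m)$, Lemma~\ref{eligible} shows $(X',Y')$ is eligible in $D_3$; and since $D_3\cong A_3$ and the characterization in type $A_3$ is already in hand, $(X',Y')$ is an absolutely continuous pair provided it is not an exceptional pair of type $A_3$, i.e.\ provided $X'$ and $Y'$ are not both of type $SU(2)\times SU(2)$. Using the explicit description of $X\mapsto X'$ together with the classification of types, I would check that the only elements of $D_4$ whose reduction has type $SU(2)\times SU(2)$ (equivalently type $D_2$) in $D_3$ are those of type $D_3$ in $D_4$, and that any pair both of whose members is of type $D_3$ violates the eligibility bound $S_X+S_Y\le 2n=8$. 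Hence for eligible, non‑exceptional $(X,Y)$ the reduced pair is always absolutely continuous. As the only pair‑type excluded from Prop.~\ref{indstep} in $D_n$ — namely $X,Y$ both of type $SU(n)$ — is itself exceptional and so outside our hypothesis, Prop.~\ref{indstep} then applies and gives absolute continuity. (A small amount of care is needed because $S_{X'}$ and the eligibility bound are read off differently in the $D_3$ and $A_3$ pictures; one checks the two conventions give the same ``absolutely continuous / not'' verdict.)

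\textbf{The $D_5$ case.} The same scheme reduces an eligible, non‑exceptional $(X,Y)$ in $D_5$ to $(X',Y')$ in $D_4$, which is eligible by Lemma~\ref{eligible} and hence absolutely continuous by the $D_4$ case just settled \emph{unless} $(X',Y')$ is one of the exceptional pairs of $D_4$, i.e.\ of type $(SU(4),SU(4))$, $(SU(4),SU(3))$, $(SU(4),SU(2)\times SU(2))$ with the stated conjugacy restriction, or $(SU(4),SU(2)\times D_2)$. Running the description of $X\mapsto X'$ backwards pins down the finite list of eligible, non‑exceptional pairs in $D_5$ reducing to one of these; for instance the pair of type $(SU(5),D_2\times SU(3))$ reduces to the $D_4$‑exceptional pair $(SU(4),SU(3))$, and similar pairs reduce to the other three families. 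For each residual pair I would verify Wright's inequality \eqref{WrC} by hand: list, up to Weyl conjugacy, the rank‑$4$ root subsystems $\Psi\subseteq\Phi(D_5)$ with $sp(\Psi)\cap\Phi=\Psi$ (the ones of type $D_4$ and $A_4=SU(5)$, together with the reducible ones such as $D_k\times D_{5-k}$ and $D_k\times A_{4-k}$), compute $|\Phi|$, $|\Psi|$ and $|\Phi_{X_i}|$, and then the minima $\min_{\sigma\in W}|\Phi_{X_i}\cap\sigma(\Psi)|$, and check that $(L-1)(|\Phi|-|\Psi|)-1\ge\sum_i\bigl(|\Phi_{X_i}|-\min_\sigma|\Phi_{X_i}\cap\sigma(\Psi)|\bigr)$ holds for all such $\Psi$.

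\textbf{Main obstacle.} The genuinely delicate step is that last combinatorial check: enumerating all maximal‑rank closed subsystems $\Psi$ and evaluating the intersection minima $\min_\sigma|\Phi_{X_i}\cap\sigma(\Psi)|$ without error. Everything else — translating types across $D_3\cong A_3$, reading $X'$ off $X$, and the eligibility arithmetic — is bookkeeping. I would organize the residual cases according to which of the four $D_4$‑exceptional families the reduction lands in, since within each family the relevant $\Phi_{X_i}$ are Weyl conjugate and the Wright computation is essentially uniform, and the few genuinely new computations concern the $\Psi$ of type $D_4$, $A_4$, and $D_2\times D_3$.
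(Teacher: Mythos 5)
Your overall architecture — reduce via Prop.~\ref{indstep}, bootstrap $D_5$ from $D_4$, and check Wright's criterion~\eqref{WrC} directly for the residual pairs — is the same as the paper's.  The $D_5$ paragraph is essentially right in spirit: you correctly list the four $D_4$-exceptional families and propose to trace them back and verify \eqref{WrC} for what remains (the paper boils this down to just $(SU(5),SU(3)\times D_2)$, since the other candidates are themselves exceptional or ineligible).  But the $D_4$ step contains a genuine gap, and it propagates.

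The error is in the sentence ``$(X',Y')$ is an absolutely continuous pair provided it is not an exceptional pair of type $A_3$, i.e.\ provided $X'$ and $Y'$ are not both of type $SU(2)\times SU(2)$.''  Lemma~\ref{eligible} gives you eligibility of $(X',Y')$ \emph{in the $D_3$ convention}, where the bound in \eqref{eligiblecriteria} is $2\cdot 3=6$.  Under $D_3\cong A_3$ the values $S_{X'}$ transfer unchanged, but the $A_3$ bound is $n+1=4$, so $D_3$-eligibility does \emph{not} imply $A_3$-eligibility.  Concretely, the $D_3$-pairs of type $(SU(3),SU(3))$ and $(SU(3),SU(2))$ have $S_{X'}+S_{Y'}=6$ and $5$ respectively; these satisfy the $D_3$ bound but violate the $A_3$ bound, so they are not absolutely continuous — and they are not of the form you worried about.  (Your footnote acknowledges the convention mismatch but then your argument quietly uses ``eligible'' rather than ``absolutely continuous'', which is exactly where it breaks.)  Because of this, your claim that Prop.~\ref{indstep} applies to every eligible non-exceptional pair in $D_4$ is false: for instance a pair of type $\bigl(SU(4),\,SU(2)\times SU(2)\bigr)$ with $\Phi_{X_2}$ \emph{not} Weyl conjugate to a subset of $\Phi_{X_1}$ reduces to a non-absolutely-continuous $D_3$-pair of type $(SU(3),SU(2))$, so Prop.~\ref{indstep} is unavailable, yet the pair is eligible and (by hypothesis) non-exceptional and so must be handled.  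This is precisely the one $D_4$ case the paper settles by a direct verification of \eqref{WrC}.  The fix is to enumerate the pairs $(X',Y')$ in $D_3$ that fail to be absolutely continuous (there are four: $(D_2,D_2)$, $(D_2,SU(3))$, $(SU(3),SU(3))$, $(SU(3),SU(2))$, the first two $D_3$-ineligible and hence ruled out by Lemma~\ref{eligible}), trace them back, observe that all the resulting $D_4$-pairs are exceptional except $(SU(4),SU(2)\times SU(2))$ with non-conjugate annihilators, and check Wright's inequality for that one.  Only then is the $D_4$ base case complete, and only then can the $D_5$ bootstrap you outline be carried out as stated.
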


\begin{proof}
As explained above, we begin the proof by considering $D_{3}$. Under the Lie
isomorphism between $D_{3}$ and $A_{3}$, any subsystem of type $D_{2}$ in $%
D_{3}$ is isomorphic to one of type $A_{1}\times A_{1}$, type $D_{1}$ is
isomorphic to one of type $A_{0}$ (or $SU(1)$) and types $SU(j)$ for $%
j=1,2,3 $ are unchanged under such an isomorphism. With this observation and
the criteria for absolute continuity already known for the Lie algebra of
type $A_{3}$, it is easy to check that all pairs $(X,Y)$ in $D_{3}$ are
absolutely continuous except those of type $(D_{2},D_{2})$, $(D_{2},SU(3))$, 
$(SU(3),SU(3))$ and $(SU(3),SU(2))$, the first two of these being not
eligible and the latter two, exceptional.

Case $D_{4}$: Prop.~\ref{indstep} guarantees that all eligible,
non-exceptional pairs, $(X,Y)$, in $D_{4}$ are absolutely continuous, except
when the reduced pair, $(X^{\prime },Y^{\prime }),$ is one of the four pairs
listed above. Furthermore, because we have already seen that the pair $%
(X^{\prime },Y^{\prime })$ is eligible whenever $(X,Y)$ is an eligible,
non-exceptional pair, we will only need to give a special argument for those
pairs $(X,Y)$ where $X^{\prime }$ is type $SU(3)$ and $Y^{\prime }$ is
either type $SU(3)$ or $SU(2)$ (the latter being type $SU(2)\times D_{1}$ or 
$SU(2)\times SU(1)$).

Thus we are left to study the pairs $(X,Y)$ where $X$ is of type $SU(4)$ and 
$Y$ is one of type $SU(4)$, type $SU(3)$ (to be more precise, either type $%
SU(3)\times D_{1}$ or $SU(3)\times SU(1)$), type $SU(2)\times D_{2}$ or $%
SU(2)\times SU(2)$. However, these are all exceptional pairs except when $X$
is type $SU(4)$, $Y$ is of type $SU(2)\times SU(2)$ and $\Phi _{Y}$ is not
Weyl conjugate to a subset of $\Phi _{X}$.

To prove this last pair is absolutely continuous, we verify the criteria of
Thm.~\ref{WrCriteria} (with $X_{1}=X$ and $X_{2}=Y$) and follow the notation
there. Here we have $|\Phi |=24$ and $|\Phi _{X_{1}}|+|\Phi _{X_{2}}|=
12+4=16$. The rank $3$, root subsystems, $\Psi $ of $D_{4}$, are those of
type $D_{3}$, $SU(4)$ (two non-Weyl conjugate subsystems) and $D_{2}\times
SU(2)$.

When $\Psi $ is type $D_{2}\times SU(2)$, then $|\Psi |=6$. Thus we even
have $|\Phi |-|\Psi |-1\geq |\Phi _{X_{1}}|+|\Phi _{X_{2}}|,$ so (\ref{WrC})
clearly holds. When $\Psi $ is type $D_{3}$, then $|\Psi |=12$. However, $%
\left\vert \Phi _{X_{1}}\cap \sigma (\Psi )\right\vert =6$ and $\left\vert
\Phi _{X_{2}}\cap \sigma (\Psi )\right\vert \geq 2$ for all choices of $%
\sigma \in W$ because $\sigma (\Psi )$ must contain $\pm e_{i}\pm e_{j},\pm
e_{i}\pm e_{k},\pm e_{j}\pm e_{k}$ for three choices of letters $i,j,k$.
Thus the LHS of (\ref{WrC}) is $11$, while the RHS is at most $8$.

Now assume $\Psi $ is type $SU(4)$. First, suppose $\Psi $ is Weyl conjugate
to the set of annihilators of $X$. Since we need to calculate the
intersection of $\Phi _{X_{j}}$ with all Weyl conjugates of $\Psi $, there
is no loss of generality in assuming $\Phi _{X_{1}}=\Psi
=\{e_{i}-e_{j}:1\leq i\neq j\leq 4\}$. By assumption, $\Phi _{X_{2}}$ is not
Weyl conjugate to a subset of $\Phi _{X_{1}}$, thus there is also no loss of
generality in assuming $\Phi _{X_{2}}=\{\pm (e_{1}-e_{2}),\pm
(e_{3}+e_{4})\} $.

The reader can check that $\left\vert \Phi _{X_{1}}\cap \sigma (\Psi
)\right\vert $ is minimal when we take the choice of $\sigma \in W$ that
switches two signs and in this case $\left\vert \Phi _{X_{1}}\cap \sigma
(\Psi )\right\vert =4$. Similarly, it can be shown that if $\sigma $ is any
Weyl conjugate, then $\left\vert \Phi _{X_{2}}\cap \sigma (\Psi )\right\vert
\geq 2$, so that again the LHS of (\ref{WrC}) is $11$ and the RHS is at most 
$10$.

Finally, suppose $\Psi $ is not Weyl conjugate to $\Phi _{X_{1}}$. Without
loss of generality we can assume $\Psi $ is as before and $\Phi
_{X_{1}}=\{e_{i}-e_{j},\pm (e_{4}+e_{j}):1\leq i\neq j\leq 3\}$. Again $%
\left\vert \Phi _{X_{1}}\cap \sigma (\Psi )\right\vert $ is minimal when $%
\sigma $ is the Weyl element that switches two signs, but in this case $%
\left\vert \Phi _{X_{1}}\cap \sigma (\Psi )\right\vert =6$. This is already
enough to establish (\ref{WrC}) and completes the argument that $(X,Y)$ is
an absolutely continuous pair.

Case $D_{5}$: Again, Prop.~\ref{indstep} implies we only need to study the
eligible non-exceptional pairs, $(X,Y)$ in $D_{5},$ where the reduced pair
has $X^{\prime }$ of type $SU(4)$ and $Y^{\prime }$ one of type $SU(4)$, $%
SU(3)$, $SU(2)\times D_{2}$, or $SU(2)\times SU(2)$. Since the pairs $%
(SU(5),SU(5)$ and $(SU(5),$ $SU(4))$ are exceptional and the pair $%
(SU(5),D_{3}\times SU(2))$ is not eligible, this reduces the problem to the
study of the pairs $(X,Y)$ where $X$ is of type $SU(5)$ and $Y$ is either of
type $SU(3)\times D_{2}$ or $SU(3)\times SU(2)$. Further, since the set of
annihilators of an element of type $SU(3)\times SU(2)$ is contained in the
set of annihilators of an element of type $SU(3)\times D_{2}$, it will
suffice to prove that the pair $(SU(5)$, $SU(3)\times D_{2})$ is absolutely
continuous.

For this, we again use Thm.~\ref{WrCriteria}. In $D_{5}$, the rank $4$ root
subsystems $\Psi $ which we must study are those of type $D_{4}$, $SU(5)$, $%
D_{3}\times SU(2)$ and $D_{2}\times SU(3)$, with cardinalities $24$, $20$, $%
14$ and $10$, respectively. The cardinality of $\Phi $ is $40$, while $|\Phi
_{X_{1}}|=20$ and $|\Phi _{X_{2}}|=10$.

Let $\Lambda $ be a root subsystem of type $D_{2},D_{3}$ or $D_{4}$ in $%
D_{5} $. It is easy to see that if $\Lambda $ is a root subsystem of type $%
D_{j}$ in $D_{5},$ with $j=2,3,4$, then $\left\vert \Phi _{X_{1}}\cap
\Lambda \right\vert =\frac{1}{2}\left\vert \Lambda \right\vert $.
Furthermore, $\left\vert \Phi _{X_{2}}\cap \Lambda \right\vert =6$ whenever $%
\Lambda $ is type $D_{4}$. Since the action of a Weyl element preserves the
type of a root subsystem, these calculations can be used to show that (\ref%
{WrC}) holds if $\Psi $ is type $D_{4}$, $D_{3}\times SU(2)$ or $D_{2}\times
SU(3)$.

When $\Psi $ is type $SU(5)$, then $\left\vert \Phi _{X_{1}}\cap \sigma
(\Psi )\right\vert $ $\geq 8$ for all $\sigma $ (with the minimum occurring
when $\sigma $ is two sign changes). Moreover, $\left\vert \Phi _{X_{2}}\cap
\sigma (\Psi )\right\vert \geq 4$ so that again (\ref{WrC}) is satisfied.
This shows that the pair $(SU(5)$, $SU(3)\times D_{2})$ is absolutely
continuous and completes the base case arguments.
\end{proof}

Further complications arise with type $D_{n}$ because of the fact that when $%
X$ is of type $SU(n)$, then $\mu _{X}^{2}$ $\ $is not absolutely continuous.
We have already seen this complication in the proof of Prop.~\ref{indstep}
(when $L=2$), but it presents further difficulties when $L>2$. To handle
this, we introduce the following terminology for the remainder of the proof.

\begin{definition}
We will say that $X$ is \textbf{almost dominant $SU$ type} if $X$ is type $%
D_{J}\times SU(s_{1})\times \cdots \times SU(s_{t})$ where $J\leq \sum s_{i}$%
.
\end{definition}

Of course, if $X$ is dominant $SU$ type, then it is almost dominant $SU$
type. However, $X$ is also almost dominant $SU$ type if $X$ is dominant $D$
type, but $X^{\prime }$ is dominant $SU$ type, for instance. If $X$ is
almost dominant $SU$ type and not type $SU(n)$, then \cite[Thm.~8.2]{GHMathZ}%
, implies $\mu _{X}^{2}\in L^{2}$. Here are some additional properties.

\begin{lemma}
\label{D}Suppose $X_{1},X_{2}$ are almost dominant $SU$ type in $D_{n}$ and $%
X_{3}\neq 0$.

\textrm{(i)} If neither $X_{1}$ nor $X_{2}$ are type $SU(n)$, then $\mu
_{X_{1}}\ast \mu _{X_{2}}\in L^{2}.$

\textrm{(ii)} If $X_{1}$ and $X_{2}$ are both type $SU(n)$ and $X_{3}$ is
not, then $\mu _{X_{1}}\ast \mu _{X_{2}}\ast \mu _{X_{3}}\in L^{2}.$

\textrm{(iii)} More generally, if $X_{1}$ is type $SU(n)$ and $X_{2}$ is
not, then $\mu _{X_{1}}\ast \mu _{X_{2}}\ast \mu _{X_{3}}\in L^{2}.$

\textrm{(iv)} If $n\geq 5$ \textrm{(}or $n=4$\textrm{)} and $X_{3}$ \textrm{(%
}and $X_{4}$\textrm{)} is almost dominant $SU(n)$ type, then $\mu
_{X_{1}}\ast \mu _{X_{2}}\ast \mu _{X_{3}}(\ast \mu _{X_{4}})\in L^{2}.$
\end{lemma}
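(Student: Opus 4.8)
The plan is to deduce all four parts from \cite[Thm.~8.2]{GHMathZ} by means of a Fourier-analytic H\"older estimate on the additive group of $\mathfrak{g}_{n}$, generalizing the H\"older step used in the proof of Prop.~\ref{indstep}, Case~2. Since $\mathfrak{g}_{n}$ is abelian, Plancherel gives that a compactly supported probability measure $\nu$ on $\mathfrak{g}_{n}$ lies in $L^{2}$ exactly when $\widehat{\nu}\in L^{2}(\mathfrak{g}_{n})$, and $\widehat{\mu_{Y_{1}}\ast\cdots\ast\mu_{Y_{L}}}=\prod_{i}\widehat{\mu_{Y_{i}}}$. Each $\widehat{\mu_{Y_{i}}}$ is continuous with $|\widehat{\mu_{Y_{i}}}|\le\widehat{\mu_{Y_{i}}}(0)=1$, so $q\mapsto\int|\widehat{\mu_{Y_{i}}}|^{q}$ is non-increasing, and by \cite[Thm.~8.2]{GHMathZ} one has $\int|\widehat{\mu_{Y_{i}}}|^{2k}<\infty$ whenever $k\ge k(Y_{i})$. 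Applying H\"older's inequality to $\int\prod_{i}|\widehat{\mu_{Y_{i}}}|^{2}$, with exponents chosen $\ge k(Y_{i})$ and with reciprocals summing to $1$ (possible, using monotonicity, precisely when $\sum_{i}1/k(Y_{i})\ge 1$), yields the criterion: \emph{if $Y_{1},\dots,Y_{L}$ are nonzero and $\sum_{i}1/k(Y_{i})\ge 1$, then $\mu_{Y_{1}}\ast\cdots\ast\mu_{Y_{L}}\in L^{2}$.}

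From \cite[Thm.~8.2]{GHMathZ} (cf.\ Section~2) we use the following values of $k(X)$ for nonzero $X$ in $D_{n}$: $k(X)\le 2$ if $X$ is almost dominant $SU$ type and not of type $SU(n)$; $k(X)\le 3$ if $X$ is of type $SU(n)$ with $n\ge 5$, while $k(X)\le 4$ if $X$ is of type $SU(4)$ in $D_{4}$ (the value being $4$ there because, by Lemma~\ref{Excep}(iv), $\mu_{X}^{3}$ is not absolutely continuous); and $k(X)\le n$ in all cases.

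Parts (i), (ii), (iv), and part (iii) for $n\le 6$ now follow at once from this criterion. Indeed: for (i), $1/2+1/2=1$; for (ii), $1/3+1/3+1/2>1$ if $n\ge 5$ and $1/4+1/4+1/2=1$ if $n=4$; for (iv), each factor has $k\le 3$ when $n\ge 5$ (type $SU(n)$ at worst), giving $3\cdot(1/3)=1$, and each has $k\le 4$ when $n=4$, giving $4\cdot(1/4)=1$; for (iii) with $4\le n\le 6$ we have $k(X_{1})\le 3$, $k(X_{2})\le 2$ and $k(X_{3})\le n\le 6$, so $1/k(X_{1})+1/k(X_{2})+1/k(X_{3})\ge 1/3+1/2+1/6=1$.

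The one remaining case, part (iii) with $n\ge 7$, is the main obstacle: here $k(X_{3})$ may equal $n>6$ (precisely when $X_{3}$ is of type $SU(n-1)$ or $D_{n-1}$), so $1/k(X_{1})+1/k(X_{2})+1/k(X_{3})<1$ and the criterion no longer applies; closing this small deficit is the genuine difficulty. I would argue by induction on $n$, the cases $4\le n\le 6$ above serving as base cases. For the inductive step one passes to the reduced triple $(X_{1}',X_{2}',X_{3}')$, checks that it again satisfies the hypotheses of (iii) in $\mathfrak{g}_{n-1}$ and hence, by the inductive hypothesis, has $\mu_{X_{1}'}\ast\mu_{X_{2}'}\ast\mu_{X_{3}'}\in L^{2}$, and then propagates $L^{2}$-membership up to $\mathfrak{g}_{n}$ by an $L^{2}$-refinement of the general strategy, Prop.~\ref{general strategy}: one verifies that when the transversality identity (\ref{keyiden}) is produced by adjoining, along the $\Omega$-directions, directions transverse to an $L^{2}$-density on $\mathfrak{g}_{n-1}$, the resulting convolution on $\mathfrak{g}_{n}$ is again in $L^{2}$ and not merely absolutely continuous. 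Alternatively, since a large value of $k(X_{3})$ confines $X_{3}$ to finitely many types whose annihilating root systems are of full rank, these finitely many configurations can be dispatched directly through Wright's criterion, Thm.~\ref{WrCriteria}, exactly as in the base-case Lemma~\ref{DnBaseCase}. The point requiring care is precisely that the reduction preserves $L^{2}$-membership, not just absolute continuity.
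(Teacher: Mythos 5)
Your Fourier--H\"older criterion --- that $\mu_{Y_{1}}\ast\cdots\ast\mu_{Y_{L}}\in L^{2}$ whenever $\sum_{i}1/k(Y_{i})\ge 1$ --- is correct, and it does reproduce the paper's argument for parts (i) and (iv): in both places the paper simply invokes $\mu_{X}^{2}\in L^{2}$ (respectively $\mu_{X}^{3}\in L^{2}$ for $n\ge 5$, $\mu_{X}^{4}\in L^{2}$ for $n=4$) from \cite[Thm.~8.2]{GHMathZ} and applies H\"older, exactly as you do.

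For part (ii), however, your argument has a genuine error. You compute $1/3+1/3+1/2$, which silently requires $k(X_{3})\le 2$. But the lemma's hypothesis is only that $X_{3}\ne 0$ and $X_{3}$ is not of type $SU(n)$; nothing forces $X_{3}$ to be (almost) dominant $SU$ type. For example, $X_{3}$ may be of type $D_{n-1}$, in which case $k(X_{3})=n$, and the best the H\"older criterion can give is $1/3+1/3+1/n=2/3+1/n$, which is $<1$ for every $n\ge 4$. So the pure H\"older route cannot prove (ii) under the stated hypotheses, no matter how the exponents are distributed. (The same issue poisons your $n=4$ computation $1/4+1/4+1/2=1$.) The paper does not attempt H\"older here at all: for (ii) and (iii) it runs an induction on $n$, reducing to the base case $D_{3}\cong A_{3}$ (all triples absolutely continuous except $(SU(3),SU(3),SU(3))$) and lifting via the general strategy, Prop.~\ref{general strategy}, with $\Omega_{0}=\{FEe_{1}+e_{n}\}$ and $M=REe_{1}+e_{n}$. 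This proves absolute continuity, and absolute continuity is all that is actually used of parts (ii) and (iii) later in the paper.

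For part (iii) your diagnosis of the obstruction for $n\ge 7$ is accurate, but you only sketch two possible fixes (an $L^{2}$-refinement of the general strategy, or a finite check via Thm.~\ref{WrCriteria}) without carrying either out, and the $L^{2}$-refinement you would need does not exist in the paper --- the general strategy yields absolute continuity, not $L^{2}$-membership. Also note the small inaccuracy that for $n=4$ one has $k(SU(4))=4$, not $\le 3$; it happens not to spoil your arithmetic for (iii), but it does matter for (ii). The upshot: (i) and (iv) are fine and agree with the paper; (ii) is wrong as argued; (iii) is incomplete beyond $n=6$. To repair (ii) and (iii) you should abandon H\"older and follow the paper's route: induct on $n$, reducing to the reduced triple $(X_{1}',X_{2}',X_{3}')$ (redefining $X_{2}'$ in (iii) so that it stays almost dominant $SU$ and not type $SU(n-1)$), and invoke Prop.~\ref{general strategy} to conclude absolute continuity at rank $n$.
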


\begin{proof}
(i) follows from \cite{GHMathZ} as remarked above. The fact that it is
absolutely continuous, which is actually all we will need for our
application, also follows from the $L=2$ part of the proof of the main
theorem since $(X_{1},X_{2})$ is an eligible, non-exceptional pair.

(iv) holds similarly from \cite{GHMathZ} since $\mu _{X}^{3}\in L^{2}$
whenever $X$ is almost dominate $SU$ type and $n\geq 5,$ and $\mu
_{X}^{4}\in L^{2}$ when $n=4$. (Alternatively, absolute continuity can be
checked from Theorem \ref{WrCriteria}.)

For (ii) and (iii) we proceed by induction on $n$, noting that according to
the main theorem, as already established for all $L\geq 2$ in the Lie
algebras of type $A_{n}$, all triples in $A_{3}$ (equivalently, $D_{3}$) are
absolutely continuous, except when all three are type $SU(3)$.

Now assume $n\geq 4$. We put 
\begin{equation*}
\Omega =\{FEe_{1}\pm e_{j}:j=2,\dots,n,F=R,I\}.
\end{equation*}

(ii): Here $X_{1}^{\prime },X_{2}^{\prime }$ will be of type $SU(n-1)$ in $%
D_{n-1}$, while $X_{3}^{\prime }$ is not, so the induction hypothesis
applies. Without loss of generality we can assume 
\begin{equation*}
\Omega _{X_{1}}=\{FEe_{1}+e_{j}:j\geq 2,F=R,I\}
\end{equation*}
and $\Omega _{X_{2}}$ either coincides with $\Omega _{X_{1}}$ or 
\begin{equation*}
\Omega _{X_{2}}=\{FEe_{1}+e_{j},FEe_{1}-e_{n}:j\leq n-1,F=R,I\}.
\end{equation*}

As $X_{3}$ is not of type $SU(n)$, $\Omega _{X_{3}}$ contains $\{FEe_{1}\pm
e_{n}\}$. Let $g$ be the Weyl conjugate changing the signs of $2,\dots,n-1$
(and $n$ if needed to be an even sign change). Then $\Omega _{X_{1}}\cup
Ad(g)(\Omega _{X_{2}})$ contains all of $\Omega $ except for possibly $%
\{FEe_{1}-e_{n}:F=R,I\}$. If $\Omega _{0}=$ $\{FEe_{1}+e_{n}\}$, we have 
\begin{equation*}
\Omega _{X_{1}}\cup Ad(g)(\Omega _{X_{2}})\cup (\Omega _{X_{3}}\diagdown
\Omega _{0})=\Omega .
\end{equation*}
Taking $M=REe_{1}+e_{n}$ one can verify that the hypotheses of the general
strategy, Prop.~\ref{general strategy}, are all satisfied. Consequently, $%
(X_{1},X_{2},X_{3})$ is absolutely continuous.

(iii): We define $X_{1}^{\prime }$ and $X_{3}^{\prime }$ as usual, but will
re-define $X_{2}^{\prime }$ so that it continues to be of almost dominant $SU
$ type and not type $SU(n-1)$ (so that we will be able to apply the
induction hypothesis). This can be achieved by defining $X_{2}^{\prime }$ to
be type $D_{J-1}\times SU(s_{1})\times \cdots \times SU(s_{t})$ if $X_{2}$
is type $D_{J}\times SU(s_{1})\times \cdots \times SU(s_{t})$ with $J>1$, or
defining $X_{2}^{\prime }$ to be type $D_{J}\times SU(s_{1-1})\times \cdots
\times SU(s_{t})$ if $J=0$ or $1$ and $s_{1}=\max s_{j}$. The fact that $%
X_{2}$ is almost dominant $SU$ type ensures that $J\leq n/2$, so whether $%
X_{2}$ is dominant $SU$ type or not, $S_{X_{2}}\leq n$ and thus $%
(X_{1},X_{2})$ is an eligible pair. Further, $X_{1},X_{2}$ are not both of
type $SU(n)$.

The arguments given in Prop.~\ref{indstep}, Case 2 or 3 depending on the
situation, can be applied to prove there is some $g\in D_{n-1}$ such that $%
sp\{\Omega _{X_{1}}\diagdown \Omega _{0},Ad(g)(\Omega _{X_{2}})\}=sp\Omega $
where $\Omega _{0}$ is taken to be the choice of $FEe_{1}+e_{n}$ or $%
FEe_{1}-e_{n}$ that belongs to $\Omega _{X_{1}}$. Therefore 
\begin{equation*}
sp\{\Omega _{X_{1}}\diagdown \Omega _{0},Ad(g)\left( \Omega _{X_{2}}\right),
\Omega _{X_{3}}\}=sp\Omega .
\end{equation*}
Now take $M=REe_{1}\pm e_{n}$ (depending on the choice of $\Omega _{0}$) and
apply the general strategy.
\end{proof}

We are now ready to conclude the proof of Theorem \ref{main} by completing
the proof of sufficiency for absolute continuity in type $D_{n}$.

\begin{proof}[Proof of Theorem \protect\ref{main} continued]
\textbf{Sufficient conditions for Absolute continuity for Lie type $D_{n}$:}

Case $L=2$. Lemma \ref{DnBaseCase} starts the induction argument for type $%
D_{n}$. Now, inductively assume that all eligible, non-exceptional pairs in $%
D_{n-1}$, with $n\geq 6$, are absolutely continuous. By Lemma \ref{eligible}%
, the pair $(X^{\prime },Y^{\prime })$ is eligible. If it is an exceptional
pair, then it must be either of type $(SU(n-1)$, $SU(n-1))$ or type $%
(SU(n-1) $, $SU(n-2))$ (where the $SU(n-2)$ could be type $SU(n-2)\times
D_{1}$ or $SU(n-2)\times SU(1))$. But then $(X,Y)$ must also have been an
exceptional pair in $D_{n}$, which is a contradiction. By the induction
assumption, $(X^{\prime },Y^{\prime })$ is an absolutely continuous pair and
hence Prop.~\ref{indstep} implies that $(X,Y)$ is absolutely continuous.

Case $L\geq 3$. Again, we give an induction argument. The base case, $D_{4}$%
, will be discussed at the conclusion of the proof. So assume $n\geq 5$ and $%
(X_{1},\dots,X_{L})$ is an eligible $L$-tuple in $D_{n}$. We note that there
are no exceptional $L$-tuples in $D_{n}$ for $n\geq 5$ when $L\geq 3$.

We will take 
\begin{equation*}
\Omega =\{FEe_{1}\pm e_{j}:j=2,\dots,n\}.
\end{equation*}
More care is needed in this situation then for the Lie algebras of type $%
B_{n}$ and $C_{n},$ since the fact that $\mu _{X}^{2}$ $\notin L^{2}\ $ when 
$X$ is of type $SU(n)$ means, for example, that we cannot immediately assume
that at most one $X_{i}$ is dominant $SU$ type, as we did in the argument
for those Lie types. Here is where Lemma \ref{D} will be useful.

If three or more $X_{i}$ are dominant $SU$ type, then the induction argument
is not even necessary as Lemma \ref{D} (iv) implies that their convolution
is already in $L^{2}$ and hence is absolutely continuous.

If two $X_{i}$ (say, $X_{1},X_{2})$ are both dominant $SU$ type and some $%
X_{j}$, say $X_{3},$ is not, then we call upon one of the first three parts
of the lemma.

So we can assume there is at most one $X_{i}$ that is dominant $SU$ type,
say $X_{1}$. If there is some $X_{j},$ other than $X_{1}$, with $%
X_{j}^{\prime }$ of dominant $SU$ type, then $X_{j}$ is almost dominant $SU$
and not type $SU(n)$. Apply the appropriate part of Lemma \ref{D} with $X_{1}
$, $X_{2}$ equal to this $X_{j},$ and $X_{3}$ any other $X_{i}$.

If all $X_{j}^{\prime },$ other than $j=1,$ remain dominant $D$ type, then
the calculations used in the type $B_{n}$ or $C_{n}$ case show that $%
(X_{1}^{\prime },\dots,X_{L}^{\prime })$ is an eligible, non-exceptional
tuple in $D_{n-1}$. For the induction step we argue in the same fashion as
we did for the Lie algebras of type $B_{n}$ or $C_{n}$ in the same situation.

Finally, assume all $X_{j}$ are dominant $D$ type. If two or more $%
X_{j}^{\prime }$ are dominant $SU$ type, then the corresponding two $X_{j}$
are almost dominant $SU$ type and not of type $SU(n)$. Their convolution is
even in $L^{2}$. If at most one $X_{j}^{\prime }$ is dominant $SU$ type,
then $(X_{1}^{\prime },\dots,X_{L}^{\prime })$ is an eligible tuple, so the
induction hypothesis applies. The induction step is the same as for the
corresponding situation with type $B_{n}$ or $C_{n}$.

To conclude, we must establish the base case, $n=4$. Since $\mu _{X}^{4}\in
L^{2}$ for any non-trivial $X$ in the Lie algebra of type $D_{4}$, every $L$%
-tuple with $L\geq 4$ is an absolutely continuous tuple.

So we can assume $L=3$. The induction argument above can be applied to $%
(X_{1},X_{2},X_{3})$ provided at most one $X_{j}$ is dominant $SU$ type,
hence for such triples it suffices to check that $(X_{1}^{\prime
},X_{2}^{\prime },X_{3}^{\prime })$ in $D_{3}$ is an absolutely continuous
triple. But this follows from the main theorem for type $A_{n}$, since all
triples in $A_{3},$ except when all $X_{i}$ are type $SU(3)$, are absolutely
continuous.

If two or three $X_{j}$ are dominant $SU$ type, but at least one $X_{i}$ is
not of type $SU(4)$, Lemma \ref{D} gives the result.

If all three $X_{i}$ are type $SU(4)$ and their annihilating root systems
are Weyl conjugate, then the triple, $(X_{1},X_{2},X_{3})$, is exceptional.
Thus we can assume the annihilating root systems are not Weyl conjugate. As
the arguments are symmetric, there is no loss of generality in assuming that
the set of annihilating roots for $X_{1}$ coincides with that of $X_{2}$ and
is given by 
\begin{equation*}
\Phi _{X_{1}}=\Phi _{X_{2}}=\{e_{i}-e_{j}:1\leq i\neq j\leq 4\},
\end{equation*}
while 
\begin{equation*}
\Phi _{X_{3}}=\{e_{i}-e_{j},\pm (e_{4}+e_{k}):1\leq i\neq j\leq 3,k=1,2,3\}.
\end{equation*}
We will again call upon Thm.~\ref{WrCriteria} to check the absolute
continuity of the triple. The root systems $\Psi $ of rank $3$ that we must
consider are those of type $D_{3}$, $SU(4)$ (two non-Weyl conjugate root
subsystems) and $D_{2}\times SU(2)$.

Of course, $\sum_{i=1}^{3}\left\vert \Phi _{X_{i}}\right\vert =36$ and $%
\left\vert \Phi \right\vert =24$ when $\Phi $ is the root system of $D_{4}$.
When $\Psi $ is type $D_{2}\times SU(2)$, then $\left\vert \Psi \right\vert
=6$, and as $\Psi $ intersects non-trivially any root subsystem of type $%
SU(4)$, the inequality (\ref{WrC}) is clear. When $\Psi $ is type $D_{3}$ it
is easy to see that $\left\vert \Phi _{X_{i}}\cap \sigma (\Psi )\right\vert
\geq 6$ for each $i$ and any Weyl element $\sigma $.

When $\Psi $ is type $SU(4)$, then $\left\vert \Phi _{X_{i}}\cap \sigma
(\Psi )\right\vert \geq 4$. However, as noted in the $L=2$ argument, if $%
\Psi $ and $\Phi _{X_{i}}$ are non-Weyl conjugate subsystems of type $SU(4)$%
, then this lower bound can be improved to $\dot{6}$. Consequently, $%
2(\left\vert \Phi \right\vert -\left\vert \Psi \right\vert )-1=23$, while
the right hand side of (\ref{WrC}) is at most $36-(4+4+6)=22$, so the
inequality holds.

This completes the base case argument and hence the proof of Theorem \ref%
{main}.
\end{proof}

\section{Applications}

\subsection{Consequences of the Main Theorem}

An element $X\in \mathfrak{t}_{n}$ is said to be \textit{regular} if its set
of annihilating roots is empty. These would be the elements of type $%
SU(1)\times \cdots \times SU(1)$ (in any Lie algebra) or $D_{1}\times
SU(1)\times \cdots \times SU(1)$ in type $D_{n},$ and hence have $S_{X}=1$
or $2$. In \cite{GHGeneric} it was shown that the convolution of the orbital
measures of any two regular elements is absolutely continuous. The methods
used there could be used to prove, more generally, that the convolution of
any orbital measure with the orbital measure of a regular element is
absolutely continuous. Our theorem shows that more is true.

\begin{corollary}
Let $X,Y$ be non-zero elements in the Lie algebra of type $B_{n}$, $C_{n}$,
or $D_{n}$. If $S_{Y}$ $\leq 2,$ then $\mu _{X}\ast \mu _{Y}$ is absolutely
continuous and $O_{X}+O_{Y}$ has non-empty interior, except if $(X,Y)$ is
the exceptional pair $(SU(4),SU(2)\times SU(2))$ in $D_{4}$ where the
annihilating roots of $Y$ are a subset of a Weyl conjugate of those of $X$.
\end{corollary}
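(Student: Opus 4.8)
The plan is to deduce this directly from the main theorem, Theorem~\ref{main}, applied with $L=2$. The first task is to observe that a pair $(X,Y)$ with $X,Y$ non-zero and $S_Y\le 2$ is \emph{automatically eligible}. Writing $X$ (up to a Weyl conjugate) in the form (\ref{XinBn}) or its $C_n$/$D_n$ analogue and viewing it as a $2n$- (resp.\ $(2n+1)$-)dimensional matrix, one has $J+\sum_j s_j=n$, and $X\neq 0$ forces $J\le n-1$, so $2J\le 2n-2$, while $\max_j s_j\le n-J\le n\le 2n-2$ (using $n\ge 2$). Since $S_X$ equals one of $2J$ or $\max_j s_j$, we get $S_X\le 2n-2$ for every non-zero $X$; hence $S_X+S_Y\le (2n-2)+2=2n$, which is precisely the eligibility criterion (\ref{eligiblecriteria}). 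Thus $(X,Y)$ is always an eligible pair.

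Next I would run through the list of exceptional tuples with $L=2$ in the Lie algebras of type $B_n,C_n,D_n$ and check which of them can have a member with $S$-value at most $2$. The pair of type $SU(m)\times SU(m)$ lives only in type $SU(2m)$, so it is irrelevant here. In $D_n$ the remaining exceptional pairs are $(SU(n),SU(n))$, $(SU(n),SU(n-1))$ and, when $n=4$, the pairs $(SU(4),SU(2)\times SU(2))$ with $\Phi_Y$ Weyl conjugate to a subset of $\Phi_X$, and $(SU(4),SU(2)\times D_2)$. An element of type $SU(n)$ has $S=n\ge 4$, an element of type $SU(n-1)$ has $S=n-1\ge 3$, and an element of type $SU(2)\times D_2$ in $D_4$ is dominant $D$ type with $S=2J=4$; in each of these cases $S>2$. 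The only exceptional pair in which one member has $S$-value $2$ is therefore the $D_4$ pair $(SU(4),SU(2)\times SU(2))$ in which $\Phi_Y$ is Weyl conjugate to a subset of $\Phi_X$ (there $S_Y=2$), which is exactly the pair excluded in the statement.

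Consequently, for every non-zero $(X,Y)$ with $S_Y\le 2$ other than that one pair, $(X,Y)$ is eligible and not exceptional, so Theorem~\ref{main}(i) gives that $\mu_X\ast\mu_Y$ is absolutely continuous, and the corollary following Proposition~\ref{key} (equivalently, Proposition~\ref{key}(i)) gives that $O_X+O_Y$ has non-empty interior. For completeness one notes that the single excluded pair is a genuine exception: it is not of type $(SU(n),SU(n-1))$ in a $D_n$ with $n\ge 6$, so Theorem~\ref{main}(ii) (via Lemma~\ref{Excep}) shows its convolution is not absolutely continuous.

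There is no serious obstacle here: the entire content is already contained in Theorem~\ref{main}, and the only work is the elementary eigenvalue-multiplicity bound $S_X\le 2n-2$ together with the bookkeeping needed to confirm that the exceptional list contributes only the single stated exception. The one point requiring a little care is the $B_n$ case, where the matrices are $(2n+1)$-dimensional but $S_X$ is the dimension of the largest eigenspace minus one, so the eligibility bound $2n$ is still the correct one.
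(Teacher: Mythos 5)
Your proof is correct and follows essentially the same route as the paper's (which, in one line, observes that $S_X\le 2(n-1)$ for every non-zero $X$ and then appeals to Theorem~\ref{main}); you simply spell out the eligibility estimate $S_X+S_Y\le 2n$ and the bookkeeping over the exceptional list. One small inaccuracy at the end: in $B_n$ the quantity $S_X$ is the largest eigenspace dimension minus one only for dominant $B$ type elements (it equals the eigenspace dimension for dominant $SU$ type), but since you derive $S_X\le 2n-2$ directly from the algebraic definition this does not affect the argument.
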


\begin{proof}
This is immediate from the theorem since any non-zero $X$ has $S_{X}\leq
2(n-1)$ (with equality only if $X$ is type $B_{n-1}$ ($C_{n-1}$ or $D_{n-1}$
)).
\end{proof}

\begin{corollary}
If $(X_{1},\dots,X_{L})$ is an eligible, non-exceptional $L$-tuple of
matrices in any of the classical Lie algebras, then there are unitarily
similar matrices, $g_{i}^{-1}X_{i}g_{i}$, with the property that $%
\sum_{i=1}^{L}g_{i}^{-1}X_{i}g_{i}$ has distinct eigenvalues.
\end{corollary}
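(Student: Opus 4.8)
The plan is to read the corollary off from Theorem~\ref{main} and Proposition~\ref{key}, using only the elementary fact that an element of a classical matrix Lie algebra generically has distinct eigenvalues. First I would reduce to the case that each $X_i$ lies in the torus $\mathfrak{t}_n$: replacing $X_i$ by $u_i^{-1}X_iu_i$ for a suitable $u_i\in G_n$ changes neither the orbit $O_{X_i}$ nor the collection $\{g^{-1}X_ig:g\in G_n\}$, so neither the hypothesis nor the conclusion is affected. Since $(X_1,\dots,X_L)$ is eligible and not exceptional, Theorem~\ref{main}(i) gives that $\mu_{X_1}\ast\cdots\ast\mu_{X_L}$ is absolutely continuous, and then Proposition~\ref{key}(i) shows that $\sum_{i=1}^{L}O_{X_i}\subseteq\mathfrak{g}_n$ has non-empty interior.

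The second step is to observe that the set
\begin{equation*}
\mathcal{S}=\{Z\in\mathfrak{g}_n: \text{the matrix }Z\text{ has a repeated eigenvalue}\}
\end{equation*}
is a proper algebraic subset of the vector space $\mathfrak{g}_n$, hence closed, nowhere dense and of Lebesgue measure zero. Indeed, the coefficients of the characteristic polynomial $\chi_Z$ are polynomial functions of the entries of $Z$, and $\chi_Z$ has a repeated root exactly when its discriminant vanishes; so $\mathcal{S}$ is the zero set of a single polynomial on $\mathfrak{g}_n$. This polynomial is not identically zero because a regular torus element has distinct eigenvalues: in $so(2n+1)$ the element $(b_1,\dots,b_n)$ with the $b_j$ distinct and positive has the $2n+1$ distinct eigenvalues $0,\pm ib_1,\dots,\pm ib_n$ (the forced eigenvalue $0$ occurring only once, so the discriminant does not vanish), and in types $A_n$, $C_n$ and $D_n$ the corresponding statement is even more immediate.

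Finally, a non-empty open subset of $\mathfrak{g}_n$ is not nowhere dense, so it cannot be contained in $\mathcal{S}$; hence the interior of $\sum_{i=1}^{L}O_{X_i}$ contains a point $Z\notin\mathcal{S}$. By the definition of the orbits we may write $Z=\sum_{i=1}^{L}Ad(g_i)(X_i)=\sum_{i=1}^{L}g_i^{-1}X_ig_i$ for suitable $g_i\in G_n$, and since every element of $G_n$ is a unitary matrix, each $g_i^{-1}X_ig_i$ is unitarily similar to $X_i$. Because $Z\notin\mathcal{S}$, the matrix $\sum_{i=1}^{L}g_i^{-1}X_ig_i$ has distinct eigenvalues, which is exactly the assertion. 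The only point needing a little care is the type-by-type check in the second step that ``distinct eigenvalues'' is actually attainable --- notably in the odd orthogonal algebra, where $0$ is a forced eigenvalue of every element; the rest is a direct application of the main theorem and of the geometric dictionary already recorded in Proposition~\ref{key}.
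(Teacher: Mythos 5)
Your argument is correct and is essentially the route the paper takes: Theorem~\ref{main} plus Proposition~\ref{key}(i) gives non-empty interior of $\sum O_{X_i}$, and one then uses that the matrices with distinct eigenvalues are dense. You fill in the density claim via the discriminant locus and check the $so(2n+1)$ subtlety explicitly, which the paper leaves implicit, but the underlying idea is the same.
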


\begin{proof}
This follows from the main theorem because any subset of these matrix groups
with non-empty interior must contain an element with distinct eigenvalues.
Indeed, the elements with distinct eigenvalues are dense.
\end{proof}

On the other hand, if $X_{i}$ are matrices in one of the classical Lie
algebras and there are unitarily similar matrices, $g_{i}^{-1}X_{i}g_{i}\in
O_{X_{i}}$, with the property that $\sum_{i=1}^{L}g_{i}^{-1}X_{i}g_{i}$ has
distinct eigenvalues, then $\sum O_{X_{i}}$ contains an element $Y$ with $%
S_{Y}\leq 2$. (Indeed, $Y$ is either type $SU(1)$ or type $B_{1}$, $C_{1}$
or $D_{1}$.) As noted in the first corollary, $O_{X}+O_{Y}$ has non-empty
interior for any $X\neq 0$ and thus $\mu _{X_{1}}\ast \cdots \mu
_{X_{L}}\ast \mu _{X}$ is absolutely continuous for any $X\neq 0$. It would
be interesting to characterize the $L$-tuples for which $\sum O_{X_{i}}$
contains a matrix with distinct eigenvalues.

It is known that any $n$-fold sum of non-trivial orbits in $B_{n}$, $C_{n}$
or $D_{n}$ has non-empty interior. More can be said.

\begin{corollary}
\label{sharp}Let $n\geq 5$. If $X_{i}$ are non-zero elements in $B_{n}$ 
\textrm{(}$C_{n}$ or $D_{n}$\textrm{)} for $i=1,\dots,n-1$, then $%
O_{X_{1}}+\cdots +O_{X_{n-1}}$ has empty interior if and only if all $X_{i}$
are type $B_{n-1} $ \textrm{(}$C_{n-1}$ or $D_{n-1}$\textrm{).}
\end{corollary}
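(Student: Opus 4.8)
The plan is to read the corollary off Theorem~\ref{main} through the geometric dictionary of Proposition~\ref{key}. The first step is to observe that here $L=n-1\geq 4$, so the tuple $(X_1,\dots,X_{n-1})$ can never be exceptional (every exceptional tuple has $L\leq 3$). Consequently Proposition~\ref{key}(i) together with Theorem~\ref{main}(i) gives that $O_{X_1}+\cdots+O_{X_{n-1}}$ has empty interior if and only if the tuple fails to be eligible, i.e.\ if and only if $\sum_{i=1}^{n-1}S_{X_i}>(L-1)\,2n=2n(n-2)$. The hypothesis $n\geq 5$ is genuinely needed here: in $D_4$ a triple of elements of type $SU(4)$ with Weyl-conjugate annihilators is eligible but exceptional, hence has empty interior without all $X_i$ being of type $D_3$, so the statement is false at $n=4$.

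The second step is a short analysis of the attainable values of $S_X$ for non-zero $X$, using the type classification of Section~2. I would show: (a) $S_X\leq 2(n-1)$ always, with equality if and only if $X$ is of type $B_{n-1}$ (resp.\ $C_{n-1}$, $D_{n-1}$); and (b) if $S_X<2(n-1)$ then $S_X\leq 2(n-2)$. For (a): if $X$ is dominant $B$ ($C$, $D$) type then $S_X=2J$ with $J$ the number of zero ``pairs'', and $X\neq 0$ forces $1\leq J\leq n-1$, with $J=n-1$ giving exactly type $B_{n-1}$ ($C_{n-1}$, $D_{n-1}$); if $X$ is dominant $SU$ type then $S_X=\max_j s_j\leq n<2(n-1)$. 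For (b): the values $S_X=2J$ from dominant $B$ ($C$, $D$) type are even and at most $2(n-1)$, so the largest one strictly below $2(n-1)$ is $2(n-2)$, while the dominant-$SU$ values are at most $n\leq 2(n-2)$ (here $n\geq 4$ already suffices).

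The final step is the counting argument in both directions. If every $X_i$ has type $B_{n-1}$ (resp.\ $C_{n-1}$, $D_{n-1}$) then $\sum_{i=1}^{n-1}S_{X_i}=(n-1)\cdot 2(n-1)=2n(n-2)+2>2n(n-2)$, so the tuple is ineligible and the sum of orbits has empty interior. Conversely, if some $X_j$ is not of that type then $S_{X_j}\leq 2(n-2)$ by (b) while the remaining $n-2$ terms are each at most $2(n-1)$, so
\[
\sum_{i=1}^{n-1}S_{X_i}\;\leq\;(n-2)\cdot 2(n-1)+2(n-2)\;=\;2n(n-2),
\]
hence the tuple is eligible and the sum of orbits has non-empty interior. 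There is no real obstacle in this argument — all the depth sits in Theorem~\ref{main} — and the only points demanding a little care are confirming the exceptional-tuple bookkeeping (immediate once $L\geq 4$) and the elementary ``gap'' computation in step (b) that makes the deficit argument close; note also that although in type $B_n$ the largest eigenspace has dimension $S_X+1$ rather than $S_X$ when $X$ is dominant $B$ type, this plays no role since eligibility is phrased throughout in terms of $S_X$.
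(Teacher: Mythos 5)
Your proof is correct, and the forward direction (some $X_j$ not of type $B_{n-1}$ forces $S_{X_j}\leq 2(n-2)$, hence $\sum S_{X_i}\leq 2n(n-2)$, so eligible, hence non-empty interior) is exactly the paper's own argument. For the converse you take a slightly different route: you compute directly that if all $X_i$ are type $B_{n-1}$ then $\sum_{i=1}^{n-1}S_{X_i}=(n-1)\cdot 2(n-1)=2n(n-2)+2$, which exceeds the eligibility threshold $(L-1)2n=2n(n-2)$, so the tuple is ineligible; Theorem~\ref{main}(i) (equivalently Lemma~\ref{Elig}) then gives failure of absolute continuity and hence empty interior via Proposition~\ref{key}. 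The paper instead appeals to the external fact from \cite{GHMathZ} that $\mu_X^{n-1}$ is singular when $X$ is type $B_{n-1}$, transferring this to the mixed product $\mu_{X_1}\ast\cdots\ast\mu_{X_{n-1}}$ by noting that all $B_{n-1}$ root subsystems are Weyl conjugate, so the spanning condition of Proposition~\ref{key}(iii) is equivalent to that for the self-convolution $\mu_{X_1}^{n-1}$. Both arguments are sound; yours has the mild advantage of being self-contained within the present paper's eligibility machinery and avoiding the Weyl-conjugacy transfer step. Your identification of the $D_4$ triple of Weyl-conjugate type-$SU(4)$ elements as the reason $n\geq 5$ is needed, and the exceptional-tuple bookkeeping ($L=n-1\geq 4$ rules them out), are correct and clarify the role of the hypothesis, which the paper leaves implicit.
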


\begin{proof}
Suppose some $X_{i}$, say $X_{n-1}$, is not type $B_{n-1}$. Then $%
S_{X_{n-1}}\leq 2(n-2)$. As all $S_{X_{i}}\leq 2(n-1)$, 
\begin{equation*}
\sum_{i=1}^{n-1}S_{X_{i}}\leq 2(n-1)(n-2)+2(n-2)\leq 2n(n-2).
\end{equation*}
Thus $(X_{1},\dots,X_{n-1})$ is eligible and non-exceptional and hence the
sum of the orbits has non-empty interior. Since all root subsystems of type $%
B_{n-1}$ are Weyl conjugate, the converse follows from the fact that if $X$
is type $B_{n-1}$, then $\mu _{X}^{n-1}$ is not absolutely continuous \cite%
{GHMathZ}.
\end{proof}

We leave it as an exercise for the reader to determine the choice of $n-1$
tuples that are not absolutely continuous when $n\leq 4$ and in type $A_{n}$.

We note that for type $A_{n}$, $B_{n}$ and $C_{n}$ our proof required the
use of \cite{GHMathZ} only to start the induction process. In the proof
given in \cite{GHMathZ} an induction argument was also used and the base
cases were simply done directly. That approach could have been taken here,
as well. For type $D_{n}$ our proof also used \cite{GHMathZ} to establish
that when $X$ was type $SU(n)$, then $\mu _{X}^{3}$ was absolutely
continuous for $n\geq 5$ and $\mu _{X}^{4}$ was absolutely continuous when $%
n=4$. In fact, the argument that was given there for these special types
actually showed that Theorem \ref{WrCriteria} was satisfied. Thus, our
theorem gives another way to deduce the formulas of \cite{GHMathZ}. For
example, we have

\begin{corollary}
Suppose $\mathfrak{g}$ is type $B_{n}$ and $X$ is type $B_{J}\times
SU(s_{1})\times \cdots \times SU(s_{m})$.

\textrm{(i)} If $X$ is dominant $B$ type, then $\mu _{X}^{L}$ is absolutely
continuous \textrm{(}and $(L)O_{X}$ has non-empty interior\/\textrm{)} if
and only if $L\geq n/(n-J)$.

\textrm{(ii)} If $X$ is dominant $SU$ type, then $\mu _{X}^{2}$ is
absolutely continuous.
\end{corollary}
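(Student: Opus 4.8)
The plan is to obtain the corollary as an immediate consequence of Theorem \ref{main} applied to the constant $L$-tuple $(X,X,\dots,X)$, together with Proposition \ref{key} for the statement about interiors. The first observation I would make is that \emph{no} exceptional tuple occurs in a Lie algebra of type $B_{n}$: inspecting the definition of exceptional tuple, every such tuple lies in a Lie algebra of type $SU(2n)$ or of type $D_{n}$. Consequently, for $\mathfrak{g}$ of type $B_{n}$, Theorem \ref{main}(i) applies to every tuple, and it says that $\mu_{X}^{L}=\mu_{X}\ast\cdots\ast\mu_{X}$ is absolutely continuous if and only if $(X,X,\dots,X)$ is eligible, which by (\ref{eligiblecriteria}) means exactly $L\,S_{X}\le (L-1)2n$. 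Moreover, Proposition \ref{key} identifies absolute continuity of $\mu_{X}^{L}$ with $(L)O_{X}$ having non-empty interior, so the parenthetical assertion in (i) comes for free.

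It then remains to translate the inequality $L\,S_{X}\le (L-1)2n$ using the value of $S_{X}$. For part (i), $X$ dominant $B$ type gives $S_{X}=2J$, and since $X\ne 0$ we have $1\le J\le n-1$ (so $n-J\ge 1$); the inequality $2JL\le 2n(L-1)$ rearranges to $n\le L(n-J)$, i.e.\ $L\ge n/(n-J)$, which is the stated criterion. For part (ii), $X$ dominant $SU$ type gives $S_{X}=\max_{j}s_{j}$; writing $X$ as in (\ref{XinBn}) we have $J+\sum_{j}s_{j}=n$, hence $S_{X}\le\sum_{j}s_{j}=n-J\le n$. Thus the pair $(X,X)$ satisfies $2S_{X}\le 2n$, so it is eligible, and since it is not exceptional, Theorem \ref{main}(i) gives that $\mu_{X}^{2}$ is absolutely continuous.

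There is essentially no obstacle in this argument — the content is entirely bookkeeping with the definition of $S_{X}$ and the eligibility inequality. The only points requiring a word of care are confirming that the exceptional list is empty in type $B_{n}$, so that Theorem \ref{main}(i) is applicable without caveat, and noting that the degenerate case $J=n$ is vacuous because it forces $X=0$. Both are immediate, and the corollary follows.
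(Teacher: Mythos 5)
Your proposal is correct and is the intended argument: the paper gives no separate proof of this corollary, stating only that it is an example of how the main theorem recovers the formulas of \cite{GHMathZ}, and your derivation — restricting Theorem \ref{main} to the constant tuple $(X,\dots,X)$, noting that the exceptional list is empty in type $B_{n}$, and then rearranging the eligibility inequality $LS_{X}\le(L-1)2n$ using $S_{X}=2J$ (resp.\ $S_{X}=\max s_{j}\le n-J\le n$) — is exactly the bookkeeping one is meant to do. The only thing you might add for completeness is a remark that the trivial case $L=1$ is consistent with the ``if and only if'' (a singleton orbital measure is never absolutely continuous for $X\ne 0$, and $n/(n-J)>1$ since $J\ge 1$), as Theorem \ref{main} is stated for $L\ge 2$.
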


Similar statements can be made for the other types, taking into account the
exceptional cases.

\begin{remark}
(i) We have not been able to determine if the pair of type $(SU(n)$, $%
SU(n-1))$ in $D_{n}$ for $n\geq 6$ is absolutely continuous. Computer
results suggest that it is not for at least $n=6,7$. We remark that Prop.~%
\ref{indstep} shows that if such a pair is absolutely continuous for any $n$%
, say $n=n_{0}$ then, being an eligible pair, it is absolutely continuous
for all $n>n_{0}$.

(ii) It remains open to solve the analogous problem in the exceptional Lie
algebras, those of type $G_{2}$, $F_{4}$, $E_{6}$, $E_{7}$, or $E_{8}$. In 
\cite{HJY} the minimal $k(X)$ so that $\mu _{X}^{k(X)}$ is absolutely
continuous was determined for each $X$ in the compact exceptional Lie
algebras. Although the abstract root theory machinery can be applied in this
setting, there is no underlying classical matrix algebra from which to
derive the necessary conditions.
\end{remark}

\subsection{Orbital measures on Conjugacy classes in Compact Lie Groups}

A related, but more challenging problem, is to determine which $L$-tuples,
\linebreak $(x_{1},\dots,x_{L})\in G^{L},$ have the property that $\mu
_{x_{1}}\ast \cdots \ast \mu _{x_{L}}$ is absolutely continuous with respect
to Haar measure on the group $G$, when $\mu _{x}$ is the probability
measure, invariant under the conjugation action of $G$ on itself, and
supported on the conjugacy class generated by $x$, $C_{x}=\{g^{-1}xg:g\in G\}
$. In \cite[Thm.~9.1]{GHAdv}, the minimum integer $k(x)$ for which $\mu
_{x}^{k(x)}$ is absolutely continuous was determined for all the classical
Lie groups. The number $k(x)$ depended on the type of the set of
annihilating roots of $x$, where in this setting by the set of annihilating
roots we mean 
\begin{equation*}
\Phi _{x}:=\{\alpha \in \Phi :\alpha (x)\equiv 0\text{ mod }2\pi \}.
\end{equation*}
Again, by the type of $x$, we will mean the type of $\Phi _{x}$.

This was extended by Wright \cite{Wr} to convolution products of (possibly)
different $\mu _{x}$ in the case of $SU(n)$, obtaining the same
characterization as for the Lie algebra problem. In this subsection, we will
obtain a similar result for all the classical Lie groups whenever the group
elements $x_{i}=\exp X_{i}$ where $X_{i}\in \mathfrak{g}$ and $x_{i}\in G$
have the same type.

We need the following preliminary result, analogous to Prop.~\ref{key}.
Given $x\in G$, we let 
\begin{equation*}
\mathcal{N}_{x}:=\{RE_{\alpha },IE_{\alpha }:\alpha (x)\neq 0\text{ mod }%
2\pi \}.
\end{equation*}

\begin{lemma}
\textrm{(}c.f.~\cite{Ra2}, \cite{Wr}\/\textrm{)} The measure $\mu
_{x_{1}}\ast \cdots \ast \mu _{x_{L}}$ on $G_{n}$ is absolutely continuous
with respect to Haar measure on $G_{n}$ if and only if any of the following
hold:

\textrm{(i)} The set $\prod\limits_{i=1}^{L}C_{x_{i}}\subseteq G_{n}$ has
non-empty interior;

\textrm{(ii)} The set $\prod\limits_{i=1}^{L}C_{x_{i}}\subseteq G_{n}$ has
positive measure;

\textrm{(iii)} There exists $g_{i}\in G_{n}$ with $g_{1}=Id$, such that 
\begin{equation*}
sp\{Ad(g_{i})\mathcal{N}_{x_{i}}:i=1,\dots,L\}=\mathfrak{g}_{n}.
\end{equation*}
\end{lemma}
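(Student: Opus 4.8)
\textbf{Proof plan for the final lemma (conjugacy class analogue of Prop.~\ref{key}).}

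The plan is to mirror the proof of Prop.~\ref{key} exactly, replacing the adjoint orbit $O_X\subseteq\mathfrak{g}_n$ by the conjugacy class $C_x\subseteq G_n$ and the sum of orbits by the product of conjugacy classes. The structure is the chain of implications: (ii)$\Rightarrow$(iii)$\Leftrightarrow$ ``the differential of the product map has full rank somewhere'' $\Rightarrow$ absolute continuity, $\Rightarrow$(i), and (i) or absolute continuity $\Rightarrow$(ii) trivially since $\prod C_{x_i}$ is compact. The one genuinely new ingredient is the correct identification of the tangent space to a conjugacy class and of the differential of the multiplication map $G_n^L\to G_n$, after which everything is a transcription of the Sard / implicit function theorem arguments already written out.

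First I would record the tangent space computation. For $x\in G_n$, the conjugacy class $C_x$ is the image of $g\mapsto g^{-1}xg$, and by differentiating one gets $T_x(C_x)=\{Yx-xY:Y\in\mathfrak{g}_n\}=\{(\mathrm{Ad}(x^{-1})Y-Y)x:Y\in\mathfrak{g}_n\}$, which after right-translation to the identity is the image of $\mathrm{Ad}(x^{-1})-\mathrm{Id}$ on $\mathfrak{g}_n$. Writing $x=\exp X$ with $X\in\mathfrak{t}_n$ and expanding in the root-vector basis, $(\mathrm{Ad}(x^{-1})-\mathrm{Id})$ acts on $\mathrm{sp}\{RE_\alpha,IE_\alpha\}$ by a $2\times2$ block that is invertible precisely when $e^{-i\alpha(X)}\neq1$, i.e.\ when $\alpha(x)\not\equiv0\bmod2\pi$, and it kills $\mathfrak{t}_n$ together with the root planes of the annihilating roots. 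Hence (after right translation to $\mathrm{Id}$) $T_x(C_x)=\mathrm{sp}\,\mathcal{N}_x$, exactly as in the Lie algebra case, and $T_{g^{-1}xg}(C_x)=\mathrm{Ad}(g^{-1})(\mathrm{sp}\,\mathcal{N}_x)$ up to translation. This is the step I expect to be the main (though still routine) obstacle: one must be careful that the ``full rank'' condition for the product map $f(g_1,\dots,g_L)=\prod g_i^{-1}x_ig_i$ at a point translates, via left/right translations on $G_n$ which are diffeomorphisms, to the statement that $\sum_i \mathrm{Ad}(h_i)(\mathrm{sp}\,\mathcal{N}_{x_i})=\mathfrak{g}_n$ for suitable $h_i$; the non-commutativity of $G_n$ makes the translations less transparent than the additive picture in $\mathfrak{g}_n$, but each product-of-conjugates is still an open-map / Sard argument after trivialising the tangent bundle.

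With the tangent space identified, the remaining implications go through verbatim. For (ii)$\Rightarrow$(iii): if the multiplication map $F:C_{x_1}\times\cdots\times C_{x_L}\to G_n$ never has full-rank differential, Sard's theorem forces $\prod C_{x_i}$ to have Haar measure zero; so full rank holds at some point $(g_1^{-1}x_1g_1,\dots)$, and the range of the differential there, after translating to $\mathrm{Id}$, is $\sum_i\mathrm{Ad}(g_i^{-1})(\mathrm{sp}\,\mathcal{N}_{x_i})$, which must therefore be $\mathfrak{g}_n$; absorbing $g_1$ lets us take $g_1=\mathrm{Id}$. For (iii)$\Rightarrow$ absolute continuity and (iii)$\Rightarrow$(i): define $f:G_n^L\to G_n$ by $f(g_1,\dots,g_L)=\prod_i g_i^{-1}x_ig_i$, note $\mu_{x_1}\ast\cdots\ast\mu_{x_L}(E)=m_{G_n^L}(f^{-1}(E))$ by definition of the conjugation-invariant measures, use the analyticity-of-the-determinant argument to promote the single full-rank point to a full-measure set, and apply the implicit function theorem to conclude both that $f^{-1}(E)$ is null whenever $E$ is Haar-null and that $\mathrm{Im}\,f$ has non-empty interior. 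Finally absolute continuity (or (i)) trivially gives positive measure, closing the cycle. I would simply reference \cite{Ra2} and \cite{Wr} for the details of these standard arguments, as the excerpt already does for Prop.~\ref{key}.
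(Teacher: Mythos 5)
The paper itself offers no proof of this lemma, only the citations to \cite{Ra2} and \cite{Wr}, so there is no ``paper's own proof'' to compare against; your plan of transcribing the proof of Prop.~\ref{key} to the group setting is exactly the intended reading of the ``c.f.'' and is correct in substance. The one genuinely new computation --- that the (left- or right-)translated tangent space to $C_x$ at $x=\exp X$ is the image of $\mathrm{Ad}(x^{\pm1})-\mathrm{Id}$, which on each real root plane $sp\{RE_\alpha,IE_\alpha\}$ is a rotation by $\alpha(X)$ minus the identity and hence invertible precisely when $\alpha(x)\not\equiv0\bmod2\pi$ --- is right, and this is the step that replaces $T_X(O_X)=sp\,\mathcal{N}_X$.

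One small slip worth flagging: your displayed identity $Yx-xY=(\mathrm{Ad}(x^{-1})Y-Y)x$ is off; the correct factorings are $Yx-xY=(\mathrm{Id}-\mathrm{Ad}(x))(Y)\,x$ (right translation) and $Yx-xY=x\,(\mathrm{Ad}(x^{-1})-\mathrm{Id})(Y)$ (left translation). Since $\mathrm{Ad}(x^{-1})-\mathrm{Id}=\mathrm{Ad}(x^{-1})(\mathrm{Id}-\mathrm{Ad}(x))$ and $\mathrm{Ad}(x^{-1})$ is invertible, both operators have the same image $sp\,\mathcal{N}_x$, so your conclusion stands, but the formula as written should be fixed. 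You also correctly anticipate the only other wrinkle: when you translate the differential of the multiplication map $F:C_{x_1}\times\cdots\times C_{x_L}\to G_n$ to the identity, the $i$th summand comes in conjugated by the tail product $y_{i+1}\cdots y_L$ as well as by $g_i$, so what one obtains is $\sum_i \mathrm{Ad}(h_i)(sp\,\mathcal{N}_{x_i})=\mathfrak{g}_n$ for some $h_i\in G_n$; this is still exactly condition (iii) after applying $\mathrm{Ad}(h_1^{-1})$ to normalize $g_1=\mathrm{Id}$. With that, the Sard, analyticity-of-determinant, and implicit function theorem steps go through verbatim as in the proof of Prop.~\ref{key}.
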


\begin{proposition}
Let $x_{1},\dots,x_{L}\in G_{n}$ and assume $x_{i}=\exp X_{i}$ for some $%
X_{i}\in \mathfrak{g}_{n}$ where $x_{i}$ and $X_{i}$ have the same type.
Then $\mu _{x_{1}}\ast \cdots \ast \mu _{x_{L}}$ is absolutely continuous
with respect to Haar measure on $G_{n}$ if and only if $\mu _{X_{1}}\ast
\cdots \ast \mu _{X_{L}}$ is absolutely continuous with respect to Lebesgue
measure on $\mathfrak{g}_{n}$. Moreover, $\prod\limits_{i=1}^{L}C_{x_{i}}$
has non-empty interior in $G_{n}$ if and only if $\sum_{i=1}^{L}O_{X_{i}}$
has non-empty interior in $\mathfrak{g} _{n} $.
\end{proposition}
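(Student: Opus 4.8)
The plan is to show that, under the stated hypotheses, the linearly independent subsets $\mathcal{N}_{x_{i}}\subseteq\mathcal{V}_{n}$ and $\mathcal{N}_{X_{i}}\subseteq\mathcal{V}_{n}$ coincide for each $i$; once this is known, the criterion for absolute continuity of $\mu_{x_{1}}\ast\cdots\ast\mu_{x_{L}}$ furnished by the preceding Lemma and the criterion for $\mu_{X_{1}}\ast\cdots\ast\mu_{X_{L}}$ furnished by Prop.~\ref{key}(iii) become the very same condition on tuples $(g_{1},\dots,g_{L})\in G_{n}^{L}$.

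First I would reduce to the case $X_{i}\in\mathfrak{t}_{n}$: orbits, conjugacy classes, and the type of an element are all invariant under the adjoint action of $G_{n}$, so we may replace each $X_{i}$ by a $G_{n}$-conjugate lying in $\mathfrak{t}_{n}$, which replaces $x_{i}=\exp X_{i}$ by a conjugate and hence does not change $C_{x_{i}}$. Next I would record the elementary observation that for $X\in\mathfrak{t}_{n}$ and $x=\exp X$ one always has $\Phi_{X}\subseteq\Phi_{x}$: if $\alpha(X)=0$ then $Ad(x)=\exp(ad(X))$ acts on the root space of $\alpha$ by the scalar $e^{i\alpha(X)}=1$, so $\alpha(x)\equiv0\bmod2\pi$, i.e., $\alpha\in\Phi_{x}$. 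Both $\Phi_{X}$ and $\Phi_{x}$ are root subsystems of $\Phi_{n}$. The hypothesis that $x$ and $X$ have the same type means precisely that $\Phi_{x}$ and $\Phi_{X}$ are isomorphic as root systems, so in particular $|\Phi_{x}|=|\Phi_{X}|$; together with the inclusion $\Phi_{X}\subseteq\Phi_{x}$ this forces $\Phi_{X}=\Phi_{x}$, and therefore
\begin{equation*}
\mathcal{N}_{x}=\{RE_{\alpha},IE_{\alpha}:\alpha\notin\Phi_{x}\}=\{RE_{\alpha},IE_{\alpha}:\alpha\notin\Phi_{X}\}=\mathcal{N}_{X}.
\end{equation*}

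With this in hand I would invoke the two characterizations. By the preceding Lemma (c.f.~\cite{Ra2}, \cite{Wr}), $\mu_{x_{1}}\ast\cdots\ast\mu_{x_{L}}$ is absolutely continuous with respect to Haar measure on $G_{n}$ if and only if there exist $g_{i}\in G_{n}$ with $g_{1}=Id$ such that $sp\{Ad(g_{i})\mathcal{N}_{x_{i}}:i=1,\dots,L\}=\mathfrak{g}_{n}$; by Prop.~\ref{key}(iii), $\mu_{X_{1}}\ast\cdots\ast\mu_{X_{L}}$ is absolutely continuous with respect to Lebesgue measure on $\mathfrak{g}_{n}$ if and only if there exist $g_{i}\in G_{n}$ with $g_{1}=Id$ such that $sp\{Ad(g_{i})\mathcal{N}_{X_{i}}:i=1,\dots,L\}=\mathfrak{g}_{n}$. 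Since $\mathcal{N}_{x_{i}}=\mathcal{N}_{X_{i}}$ for each $i$, these are literally the same condition, which proves the first equivalence. For the remaining assertion, part (i) of the preceding Lemma says $\mu_{x_{1}}\ast\cdots\ast\mu_{x_{L}}$ is absolutely continuous exactly when $\prod_{i=1}^{L}C_{x_{i}}$ has non-empty interior in $G_{n}$, while Prop.~\ref{key}(i) says $\mu_{X_{1}}\ast\cdots\ast\mu_{X_{L}}$ is absolutely continuous exactly when $\sum_{i=1}^{L}O_{X_{i}}$ has non-empty interior in $\mathfrak{g}_{n}$; combining these with the first equivalence yields that $\prod_{i=1}^{L}C_{x_{i}}$ has non-empty interior if and only if $\sum_{i=1}^{L}O_{X_{i}}$ does.

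I do not expect a genuine obstacle here: the whole argument hinges on the identification $\mathcal{N}_{x_{i}}=\mathcal{N}_{X_{i}}$, and the only delicate point in establishing it is that the inclusion $\Phi_{X}\subseteq\Phi_{x}$ combined with equality of Lie type forces set equality — which reduces to the fact that abstractly isomorphic root subsystems of a fixed root system have the same cardinality. (In particular, the two non-Weyl-conjugate type $SU(n)$ subsystems in $D_{n}$ still have the same number of roots, so the ambiguity noted earlier in the paper causes no trouble.)
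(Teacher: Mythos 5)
Your proposal is correct and takes essentially the same route as the paper: both proofs reduce everything to the identity $\mathcal{N}_{x_{i}}=\mathcal{N}_{X_{i}}$ and then simply invoke the spanning criterion of Prop.~\ref{key}(iii) and its group analogue. The one place you add genuine content is the justification of $\mathcal{N}_{x_{i}}=\mathcal{N}_{X_{i}}$ via the inclusion $\Phi_{X_{i}}\subseteq\Phi_{x_{i}}$ together with the cardinality count from equal Lie type; the paper asserts the identity without argument (the inclusion appears only later, in a remark), so your fuller explanation is a welcome, if minor, elaboration rather than a new approach.
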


\begin{proof}
If $x_{i}$ and $X_{i}$ are of the same type, then $\mathcal{N}_{x_{i}}= 
\mathcal{N}_{X_{i}}$. Consequently, 
\begin{equation*}
sp\{Ad(g_{i})\mathcal{N}_{x_{i}}:i=1,\dots,L\}=sp\{Ad(g_{i})\mathcal{N}
_{X_{i}}:i=1,\dots,L\}
\end{equation*}
and thus $\mu _{x_{1}}\ast \cdots \ast \mu _{x_{L}}$ is absolutely
continuous if and only if $\mu _{X_{1}}\ast \cdots \ast \mu _{X_{L}}$ is
absolutely continuous. The latter statement holds as absolute continuity is
equivalent to non-empty interior of either the product of conjugacy classes
or the sum of orbits, depending on the setting.
\end{proof}

\begin{remark}
If $x_{i}=\exp X_{i}$ and $\mu _{X_{1}}\ast \cdots \ast \mu _{X_{L}}$ is not
absolutely continuous, then it still follows $\mu _{x_{1}}\ast \cdots \ast
\mu _{x_{L}}$ is not absolutely continuous. We simply note that always $\Phi
_{X_{i}}\subseteq \Phi _{x_{i}}$.
\end{remark}

Consider the Lie group $SU(n)$. Every conjugacy class contains a diagonal
matrix so in studying the measure $\mu _{x}$ there is no loss in assuming 
\begin{equation*}
x=diag(\exp ia_{1},\dots,\exp ia_{n})
\end{equation*}
where $a_{j}\in [0,2\pi )$ and $\sum a_{j}\equiv 0\text{ mod }2\pi $. Notice
that $x=\exp X$ where $X=diag(ia_{1},\dots.,ia_{n})$ belongs to $su(n)$. The
root $\alpha =e_{j}-e_{k}$ acts on $x$ (and $X$) by $\alpha (x)=a_{j}-a_{k}$%
. Thus $\Phi _{x}=\Phi _{X}$ and so the Proposition applies to all $L$%
-tuples in $SU(n)$.

This is not true for the other classical Lie groups. For example, in $%
SO(2n+1)$ (type $B_{n})$ there is an element $x$ with $\Phi _{x}=\{e_{i}\pm
e_{j}:1\leq i\neq j\leq n\}$, i.e., of type $D_{n}$. This type does not
arise in the Lie algebra. Indeed, the only element $X\in so(2n+1)$ with $%
\Phi _{X}\supseteq \Phi _{x}$ is $X=0$. The element $x$ has the property
that $\mu _{x}^{2n}\in L^{1}(G)$, but $\mu _{x}^{2n-1}$ is singular with
respect to Haar measure on $G$. In contrast, any $X$ with $\exp X=x$ has $%
\mu _{X}^{2}\in L^{2}(\mathfrak{g})$.

These additional (and often more complicated) types of elements that can
arise in the Lie groups make the problem of characterizing absolute
continuity of orbital measures on Lie groups more challenging than for Lie
algebras.

\end{document}